\newtheorem{theorem}{Theorem}[section]
\newtheorem{lemma}[theorem]{Lemma}
\newtheorem{proposition}[theorem]{Proposition}
\theoremstyle{definition}
\newtheorem{definition}[theorem]{Definition}
\newtheorem{example}[theorem]{Example}
\newtheorem{remark}[theorem]{Remark}
\newtheorem{conjecture}[theorem]{Conjecture}
\newtheorem{question}[theorem]{Question}
\numberwithin{equation}{section}
\newcommand{\A}{\mathcal{A}}
\newcommand{\F}{\mathbb{F}}
\newcommand{\Z}{\mathbb{Z}}
\newcommand{\R}{\mathbb{R}}
\newcommand{\C}{\mathbb{C}}
\DeclareMathOperator{\rank}{rank}
\DeclareMathOperator{\codim}{codim}
\DeclareMathOperator{\Sep}{Sep}
\DeclareMathOperator{\Supp}{Supp}
\newcommand{\Alt}{\operatorname{Alt}}
\newcommand{\Aut}{\operatorname{Aut}}
\newcommand{\covec}{\operatorname{\mathcal{V}^*}}
\newcommand{\cha}{\operatorname{char}}
\newcommand{\Fil}{\operatorname{\mathsf{F}}}
\newcommand{\Heav}{\mathcal{H}}
\newcommand{\gHeav}{\widetilde{\mathcal{H}}}
\newcommand{\Idem}{\operatorname{Idem}}
\newcommand{\OS}{\operatorname{\mathsf{OS}}}
\newcommand{\PrimIdem}{\operatorname{PrimIdem}}
\newcommand{\sgn}{\operatorname{sgn}}
\newcommand{\circuit}{\operatorname{\mathcal{C}}}
\newcommand{\Tope}{\operatorname{\mathcal{T}}}
\newcommand{\gTope}{\operatorname{\widetilde{\mathcal{T}}}}
\newcommand{\vecto}{\operatorname{\mathcal{V}}}
\newcommand{\VG}{\operatorname{\mathcal{VG}}}
\newcommand{\grVG}{\operatorname{\mathsf{VG}}}
\newcommand{\ch}{\operatorname{ch}}
\newcommand{\eps}{\varepsilon}
\newcommand{\sqzero}{\operatorname{\mathsf{N}^2}}
\title[Varchenko-Gelfand algebras]{Reconstruction of oriented matroids from Varchenko-Gelfand algebras}
\author{Yukino Yagi}
\address{Department of Mathematics, The University of Osaka, Osaka, Japan}
\email{u401713d@ecs.osaka-u.ac.jp}
\author{Masahiko Yoshinaga}
\address{Department of Mathematics, The University of Osaka, Osaka, Japan}
\email{yoshinaga@math.sci.osaka-u.ac.jp}
\date{23 September 2025}
\begin{document}

\begin{abstract}
The algebra of $R$-valued functions on the set of chambers of a real hyperplane arrangement is called the Varchenko–Gelfand (VG) algebra. 
This algebra carries a natural filtration by the degree with respect to Heaviside functions, giving rise to the associated graded VG algebra. 
When the coefficient ring $R$ is an integral domain of characteristic $2$, the graded VG algebra is known to be isomorphic to the Orlik–Solomon algebra. 
In this paper, we study VG algebras over coefficient rings of characteristic different from $2$, and investigate to what extent VG algebras determine the underlying oriented matroid structures. 

Our main results concern hyperplane arrangements that are generic in codimension $2$. 
For such arrangements, if $R$ is an integral domain of characteristic not equal to $2$, then the oriented matroid can be recovered from both the filtered and the graded VG algebras. 
As a byproduct, we prove that, unlike the complexification, 
the cohomology ring of the complement of a $3$-plexification of a 
real arrangement is not determined by the intersection lattice. 

We also formulate an algorithm that is expected to reconstruct oriented matroids from VG algebras in the case of general arrangements.
\end{abstract}

\maketitle 

\tableofcontents

\section{Introduction}
\label{sec:intro}

Varchenko and Gelfand \cite{var-gel} introduced the algebra 
$\VG(\A)_R$ of $R$-valued functions on 
the set of chambers of a real central hyperplane arrangement $\A$. 
The VG algebra is generated by Heaviside functions, which define 
a degree filtration $\Fil^\bullet\VG(\A)$. 
Passing to the associated graded algebra, one obtains 
$\grVG^\bullet(\A)=\operatorname{Gr}_{\Fil^\bullet}^\bullet\VG(\A)$. 

These algebras are closely related to cohomology rings of 
certain spaces. Bj\"orner \cite{bjo-sub} introduced 
$c$-plexification $\A\otimes\R^c$ and its complement $M_c(\A)$ 
for $c>0$. Note that $\A\otimes\R=\A$ and $\A\otimes\R^2$ is the 
complexification. Varchenko and Gelfand proved that the 
Hilbert series of $\grVG^\bullet(\A)$ coincides with 
the Poincar\'e polynomial of the complexified 
complement $M_2(\A)$. 
Moreover, if $R=\F_2$, the graded VG algebra 
$\grVG^\bullet(\A)_{\F_2}$ is isomorphic to 
the cohomology ring $H^\bullet(M_2(\A), \F_2)$, which is also 
isomorphic to the Orlik-Solomon algebra $\OS^\bullet(\A)_{\F_2}$ 
defined from the intersection lattice $L(\A)$. 
Furthermore, Moseley \cite{mos-equ} proved that the graded VG algebra 
$\grVG^\bullet(\A)$ is isomorphic to the cohomology ring 
$H^\bullet(M_3(\A), R)$ of the complement of $\A\otimes\R^3$, which was 
recently generalized to arrangements in convex domains 
\cite{dorp-vg, dbpw, mos-equ, pro}.

On the other hand, the arrangement $\A$ determines 
an oriented matroid. 
The oriented matroid  is a very strong invariant: 
for example, the homeomorphism type of the complexified complement 
$M_2(\A)$ is determined by the oriented matroid of $\A$ \cite{bz-st}. 
VG algebras are clearly oriented-matroid invariants. 
Indeed, if the oriented matroids of $\A_1$ and $\A_2$ are 
isomorphic (equivalent up to reorientation and permutation 
of the ground set, see \S \ref{sec:prel} for details), 
then the corresponding VG algebras are also isomorphic. 

It is therefore natural to ask whether VG algebras over 
coefficient rings with $\cha R\neq 2$ contain more information 
than in the $\F_2$-case. 
We expect that VG algebras encode information 
as rich as that of oriented matroids. 
The most ambitious conjecture in this direction is the following. 

\begin{conjecture}
\label{conj:intro}
(Conjecture \ref{conj:reconst}) 
Let $R$ be an integral domain with $\cha R\neq 2$, and let 
$\A_1$ and $\A_2$ be real arrangements. 
If $\grVG^\bullet(\A_1)_R\simeq\grVG^\bullet(\A_2)_R$ as 
graded $R$-algebras, then the oriented matroids of 
$\A_1$ and $\A_2$ are isomorphic. 
\end{conjecture}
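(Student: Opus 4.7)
The plan is to show that the oriented matroid can be canonically reconstructed from $\grVG^\bullet(\A)_R$ alone; any abstract isomorphism of graded algebras then forces an isomorphism of the reconstructed oriented matroids. The reconstruction would proceed in three stages: (i) identify a distinguished subset of $\grVG^1(\A)_R$ that coincides with $\{\pm \bar{e}_H\}_{H \in \A}$ up to the group of graded $R$-algebra automorphisms, (ii) read off, for each circuit $C$, the coefficient signs appearing in the minimal relation supported on $C$ among these generators, and (iii) assemble the resulting local sign data into a chirotope, using the classical equivalence between signed circuits and oriented matroids to finish.

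The heart of the argument is stage (i), for which the key input is that $\cha R \neq 2$ makes the quadratic map $q \colon \grVG^1 \to \grVG^2$, $x \mapsto x^2$, nontrivial. Although each Heaviside class satisfies $\bar{e}_H^2 = 0$, the computation $(\bar{e}_H + \bar{e}_{H'})^2 = 2\,\bar{e}_H \bar{e}_{H'}$ is generically nonzero when $H \neq H'$, so the zero-locus $V(q) \subset \grVG^1$ is a proper subvariety, in sharp contrast to the characteristic-$2$ setting, where $V(q) = \grVG^1$. The Heaviside generators should thus be isolated as the distinguished one-dimensional subspaces of $V(q)$---for example as its extremal rays, as minimal-support elements under some structural stratification, or as the vertices of the variety cut out jointly by $q$ and the three-term rank-$2$ relations among products $\bar{e}_H \bar{e}_{H'}$ that encode cyclic orderings of triples of hyperplanes on common codimension-$2$ flats. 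Once a candidate set $\{\pm \bar{e}_H\}$ is pinned down, stage (ii) becomes bookkeeping: the minimal syzygies among these generators take the signed circuit form, and their coefficient signs recover the chirotope modulo the global reorientation that is anyway an allowed ambiguity.

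The main obstacle is stage (i): producing an algebraic characterization of the Heaviside generators inside $\grVG^1$ that is invariant under every graded $R$-algebra automorphism. The failure of the conjecture in characteristic $2$ shows that no such characterization can rely solely on Orlik-Solomon relations, so the argument must genuinely leverage either the quadratic form $q$ or the signed circuit identities that collapse in the exterior algebra. The codimension-$2$ generic hypothesis in the paper's main theorem presumably makes stage (i) tractable because every rank-$2$ flat then contains at most three hyperplanes, so the three-term relations are the only new content beyond the exterior structure and allow the generators to be pinned down directly; the general case is substantially harder because higher-multiplicity flats produce more flexible families of signed circuit relations, potentially admitting extraneous algebra automorphisms that move the Heaviside generators around.
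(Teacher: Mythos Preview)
The statement you are attempting to prove is a \emph{conjecture} in the paper; there is no proof of it in full generality. The paper proves it only under the hypothesis that $\A_1$ is generic in codimension~$2$ (\Cref{thm:main2}), and for the general case offers only a conjectural non-deterministic reconstruction procedure (\Cref{conj:algorithm}, \Cref{rem:reconstgr}). Your write-up is therefore best read not as a proof but as a proof strategy, and indeed you acknowledge as much in your final paragraph. With that framing, your overall plan---locate the Heaviside classes inside $\grVG^1$ via the square-zero locus, then read off signed circuits from the minimal relations among their products---is exactly the approach the paper takes in \S\ref{sec:vgsign}.

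That said, your discussion of stage~(i) misses a concrete obstruction that the paper makes explicit. You write that the Heaviside generators ``should thus be isolated as the distinguished one-dimensional subspaces of $V(q)$,'' but \Cref{lem:sq0} together with \Cref{thm:genHeav} shows this is false in general: the square-zero locus $\sqzero(\A)$ is the union of the lines $R\cdot\overline{y}$ over \emph{all} generalized Heaviside functions $y\in\gHeav(\A)$, and these include the alternating functions $\Alt^\pm(H_{i_1},\dots,H_{i_{2s+1}})$ for every odd-size subset of $\A$ meeting in codimension~$2$. There is no evident algebraic way to pick out the genuine Heaviside lines from among these extras; \Cref{ex:alternating} shows explicitly that automorphisms of the filtered algebra can permute them. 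This is precisely why the paper restricts to arrangements generic in codimension~$2$: under that hypothesis $\gHeav(\A)=\Heav(\A)$ (\Cref{lem:equal}), so the extra components disappear and stage~(i) goes through.

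Finally, a small correction: ``generic in codimension~$2$'' in this paper means every codimension-$2$ flat lies on \emph{exactly two} hyperplanes (\Cref{def:gencodim2}), not ``at most three.'' With three concurrent hyperplanes you already get the non-Heaviside element $y^\pm$ of \Cref{ex:alternating}, and the paper's proof of \Cref{lem:equal} breaks down.
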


The main results of this paper establish reconstructibility 
under the assumption that $\A$ is generic in codimension $2$ 
(See Definition \ref{def:gencodim2}, which is also equivalent  to the statement that 
the intersection of any three distinct hyperplanes has codimension $3$.)

\begin{theorem}[Theorem \ref{thm:gencodim2}]
\label{thm:12}
Let $R$ be an integral domain with $\cha R\neq 2$, and 
let $\A_1$ and $\A_2$ be real arrangements. 
Assume that $\A_1$ is generic in 
codimension $2$ (see Definition \ref{def:gencodim2}). 
If $\VG(\A_1)_R\simeq\VG(\A_2)_R$ as 
filtered $R$-algebras, then the oriented matroids of 
$\A_1$ and $\A_2$ are isomorphic. 
\end{theorem}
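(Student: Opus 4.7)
The plan is to recover the oriented matroid of $\A_1$ from a filtered isomorphism $\phi : \VG(\A_1)_R \to \VG(\A_2)_R$ in three stages: first identify the Heaviside generators as distinguished idempotents, then use genericity in codimension $2$ to control the structure of circuits, and finally read off the signed circuits from filtered relations.

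For the first step, I would characterize the Heaviside functions intrinsically in the filtered algebra. Every $e_H$ is idempotent and lies in $\Fil^1 \VG(\A_1)_R$. Conversely, writing a general element of $\Fil^1$ as $x = c_0 + \sum_H c_H e_H$, the identity $e_H^2 = e_H$ lets me compute $x^2$ modulo $\Fil^1$; the resulting class in $\Fil^2/\Fil^1 \subset \grVG^2(\A_1)_R$ contains cross terms proportional to $c_H c_{H'}$ times the linearly independent wedges $a_H \wedge a_{H'}$. Since $\cha R \neq 2$ and $R$ is a domain, $x^2 = x$ forces at most one $c_H$ to be nonzero; a direct computation then yields $x \in \{0, 1, e_H, 1 - e_H\}$. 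Consequently $\phi$ induces a bijection $\sigma : \A_1 \to \A_2$, well-defined up to the reorientation involution $e_H \leftrightarrow 1 - e_H$ performed on each hyperplane independently.

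For the second and third steps, I would use the generic codim-$2$ hypothesis to eliminate circuits of size $\leq 3$ on the $\A_1$ side, reducing to circuits of size $\geq 4$. Any signed circuit $(C, \epsilon) = (\{H_{i_0}, \ldots, H_{i_k}\}, (\epsilon_0, \ldots, \epsilon_k))$ of the oriented matroid of $\A_1$ gives rise to a vanishing relation
\[
\prod_{j=0}^{k}\Bigl(e_{H_{i_j}} - \tfrac{1-\epsilon_j}{2}\Bigr) = 0 \qquad \text{in } \VG(\A_1)_R,
\]
valid because this product evaluates to a nonzero scalar only on a chamber realizing the sign pattern $\epsilon$ on $C$, and no such chamber exists. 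Transporting this relation through $\phi$ and using Step $1$, I obtain a corresponding vanishing relation in $\VG(\A_2)_R$. I would then show that such vanishing products intrinsically characterize signed circuits: because $\cha R \neq 2$, the coefficients $\tfrac{1-\epsilon_j}{2} \in \{0, 1\}$ can be read off from the algebraic form of the product once the Heaviside generators are known. This yields a bijection of signed circuits between $\A_1$ and $\A_2$, modulo the global reorientation ambiguity from Step $1$, hence an isomorphism of oriented matroids.

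The main obstacle is ensuring a coherent global choice in the third step. Step $1$ produces only the unordered pair $\{e_{\sigma(H)}, 1 - e_{\sigma(H)}\}$ for each $H$, so the sign data attached to circuits can be read only up to flipping some subset of the generators. Verifying that there exists a single reorientation (i.e., a single subset of $\A_2$ to flip) compatible with every circuit relation is a global consistency argument that I expect to follow from the circuit axioms of oriented matroids, together with the reconstruction of the unoriented matroid from the graded VG algebra. A secondary technical issue is ruling out \emph{accidental} vanishing products of the prescribed form that do not come from genuine signed circuits, which I expect to follow from the explicit description of the chamber evaluation map $\VG(\A_1)_R \hookrightarrow R^{\mathrm{ch}(\A_1)}$ together with the genericity hypothesis restricting the combinatorics of low-rank flats.
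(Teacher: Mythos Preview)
Your Step~1 is essentially the paper's key Lemma~3.5, but you misplace the genericity hypothesis: it is needed in Step~1, not in Step~2. The linear independence of the degree-$2$ classes $e_H e_{H'}$ in $\grVG^2(\A_1)$ holds precisely because generic codimension~$2$ rules out $3$-circuits; without that, extra idempotents appear in $\Fil^1$ (the paper's Example~3.4 exhibits one), and your cross-term argument collapses. By contrast, your Steps~2--3 need no genericity at all, so the sentence ``use the generic codim-$2$ hypothesis to eliminate circuits of size $\le 3$'' is a red herring. You also skip the step needed to conclude that $\phi$ carries Heaviside functions of $\A_1$ to Heaviside functions of $\A_2$: only $\A_1$ is assumed generic, so a priori $\Fil^1\cap\Idem(\VG(\A_2))$ could be larger than $\Heav(\A_2)$. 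The paper closes this by counting, using $\#\A_1=\#\A_2=\rank\grVG^1$.

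After Step~1 the paper takes a much shorter route than yours: it identifies the chambers with primitive idempotents and recovers the tope graph directly---$C$ and $C'$ are adjacent iff exactly two elements of $\Heav(\A)$ separate $1_C$ from $1_{C'}$---then cites the fact that the tope graph determines the oriented matroid. Your signed-circuit route also works, and the two ``obstacles'' you flag are not real obstacles. Minimal vanishing products $\prod_j(e_{H_{i_j}}-\tfrac{1-\epsilon_j}{2})=0$ are exactly the signed circuits, and since $\phi$ is an isomorphism it preserves both vanishing and non-vanishing of subproducts, hence minimality; no accidental vanishings survive. The reorientation is given uniformly by the single subset $\{H:\phi(e_H)=1-e_{\sigma(H)}\}$, so there is no circuit-by-circuit consistency to verify. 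The paper's approach buys brevity and exploits the primitive idempotents you do not use; your approach is closer in spirit to the paper's \S5, which recovers signed circuits from the \emph{graded} VG algebra.
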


\begin{theorem}[Theorem \ref{thm:main2}]
\label{thm:13}
Let $R$ be an integral domain with $\cha R\neq 2$, and 
let $\A_1$ and $\A_2$ be real arrangements. 
Assume that $\A_1$ is generic in codimension $2$. 
If $\grVG^\bullet(\A_1)_R\simeq\grVG^\bullet(\A_2)_R$ as 
graded $R$-algebras, then the oriented matroids of 
$\A_1$ and $\A_2$ are isomorphic. 
\end{theorem}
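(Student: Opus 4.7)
The plan is to deduce Theorem \ref{thm:13} from Theorem \ref{thm:12} by upgrading a graded isomorphism to a filtered one. Given $\varphi\colon\grVG^\bullet(\A_1)_R\to\grVG^\bullet(\A_2)_R$, the first step is to detect the Heaviside classes intrinsically. Since $h_i$ is idempotent with $h_i\in\Fil^1$, we have $\bar h_i^2 = 0$ in $\grVG^2(\A_1)_R$. For $v = \sum_i c_i\bar h_i\in\grVG^1$, commutativity of the graded algebra yields
\[
v^2 = \sum_{i<j}2c_ic_j\,\bar h_i\bar h_j.
\]
Under the genericity-in-codimension-$2$ hypothesis, no codim-$2$ flat carries more than two hyperplanes, so no three-term syzygy among degree-$2$ monomials arises and $\{\bar h_i\bar h_j\}_{i<j}$ is an $R$-basis of $\grVG^2(\A_1)_R$. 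Combining with $\cha R\neq 2$, vanishing of $v^2$ forces $c_ic_j = 0$ for all $i<j$, hence $v\in R\cdot\bar h_i$ for some $i$. Consequently $\varphi$ induces a bijection $\sigma$ of ground sets and units $u_i\in R^\times$ with $\varphi(\bar h_i) = u_i\bar h'_{\sigma(i)}$, where $h'_j$ denote the Heaviside generators of $\A_2$.

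The second step constructs the filtered lift. By comparing higher-degree syzygies (coming from rank-$\geq 3$ circuits of the matroid) on both sides of $\varphi$, the collection $(u_i)$ is constrained so that along each circuit the $u_i$ agree up to sign; together with matroid connectedness this forces $u_i = s\cdot\epsilon_i$ for some global unit $s\in R^\times$ and signs $\epsilon_i\in\{\pm 1\}$. Composing $\varphi$ with the graded automorphism that flips the signs of those $\bar h'_j$ with $\epsilon_{\sigma^{-1}(j)} = -1$ (a reorientation of $\A_2$, preserving its oriented matroid), we reduce to the case $\varphi(\bar h_i) = s\bar h'_{\sigma(i)}$. I would then lift $\varphi$ filtered-wise by $h_i\mapsto s h'_{\sigma(i)} + r_i$ for constants $r_i\in R$ to be chosen. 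The idempotency relation $h_i^2 = h_i$ in $\VG(\A_1)_R$ must be preserved, which after expansion forces $s\in\{\pm 1\}$ (using $\cha R\neq 2$ and that $R$ is a domain) and determines $r_i$ accordingly. We thus obtain a filtered algebra homomorphism $\tilde\varphi\colon\VG(\A_1)_R\to\VG(\A_2)_R$ sending each $h_i$ to either $h'_{\sigma(i)}$ or $1 - h'_{\sigma(i)}$; bijectivity follows by comparing the $R$-ranks $|\Tope(\A_i)|$, and Theorem \ref{thm:12} then supplies the oriented-matroid isomorphism.

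The main obstacle is the interplay between the graded units $u_i$ and the filtered idempotency constraint in the second step. One must show that higher-degree syzygies plus matroidal connectedness suffice to reduce $(u_i)$ to the form $s\cdot\epsilon_i$; this requires an explicit description of the degree-$k$ circuit relations in $\grVG^\bullet(\A)_R$ over arbitrary $R$ with $\cha R\neq 2$, which are more subtle than the Orlik-Solomon relations because they carry oriented-matroid sign data. A secondary delicate point is the case of arrangements whose matroids are disconnected or have no circuits of rank $\geq 3$: there the reduction to $u_i = s\cdot\epsilon_i$ either splits over connected components or becomes vacuous, and the normalization to $s\in\{\pm 1\}$ must be handled entirely at the filtered level via idempotency.
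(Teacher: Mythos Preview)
Your first step---identifying the square-zero locus of $\grVG^1(\A_1)$ as $\bigcup_i R\bar h_i$ directly from the linear independence of $\{\bar h_i\bar h_j\}_{i<j}$---is correct and pleasantly short under the genericity hypothesis (genericity in codimension $2$ excludes circuits of size $3$, so $b_2=\binom{n}{2}$ and the degree-$2$ monomials form a basis). The paper instead proves a general description $\sqzero(\A)=\bigcup_{y\in\gHeav(\A)}R\bar y$ (\Cref{lem:sq0}) via the classification of generalized Heaviside functions (\Cref{thm:genHeav}, using Sylvester--Gallai) and then specializes. One point you skip: you must also identify $\sqzero(\A_2)$ with $\bigcup_j R\bar h'_j$; this follows because the graded isomorphism forces $b_2(\A_2)=\binom{n}{2}$, hence $\A_2$ is itself generic in codimension $2$, but you should say so.

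Your second step, however, has a genuine gap. The claim that ``idempotency forces $s\in\{\pm 1\}$'' is backwards: idempotency of the lift $sh'_{\sigma(i)}+r_i$ is a \emph{constraint on which lifts exist}, not something that retroactively constrains the unit $s$ already produced by $\varphi$. If $R=\Q$ and $\A$ is Boolean (no circuits at all), the graded automorphism $\bar h_i\mapsto 2\bar h_i$ has no idempotent lift, so your construction simply fails there; nothing ``forces'' $s=\pm 1$. You could repair this by composing $\varphi$ with a scaling automorphism $\bar h'_j\mapsto s_\tau^{-1}\bar h'_j$ (component-by-component) rather than invoking idempotency---but then you must still check that the \emph{filtered} circuit relations of $\A_1$ map to zero in $\VG(\A_2)$, and that check is precisely the comparison of signed circuits you are trying to establish. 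In other words, the detour through a filtered lift and \Cref{thm:12} is circular unless you have already matched the signed circuits. The paper avoids this entirely by staying in the graded algebra: once the square-zero lines are known to be the $R\bar x_i^+$, Cordovil's vanishing criterion recovers the underlying matroid, and for each circuit $I$ the relation among $\{f_{I\setminus\{i_p\}}\}$ is unique up to scalar (\Cref{rem:uniqurel}), so its sign pattern is intrinsic. The paper then shows that any two ``good'' systems of generators (those for which every circuit relation has $\pm 1$ coefficients) yield the same signed-circuit collection up to reorientation. Your observation ``along each circuit the $u_i$ agree up to sign'' is exactly the core of that argument, and once you have it you are already done---no filtered lift or appeal to \Cref{thm:12} is needed.
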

Clearly Theorem \ref{thm:13} implies Theorem \ref{thm:12} immediately. 
However, the proofs rely on different techniques. 
The proof of 
Theorem \ref{thm:12} reconstructs the tope graph of the arrangement 
by means of primitive idempotent elements of $\VG(\A)$, 
while the proof of 
Theorem \ref{thm:13} reconstructs the signed circuits 
of the arrangement 
using square-zero elements of $\grVG^1 (\A)$ 
and the relations among them. 
For this reason, we present the two results separately. 

These results show that VG algebras contain 
sufficient information to recover oriented matroids 
under genericity conditions. We also point out that 
none of the following invariants determine the VG algebra: 
the intersection lattice $L(\A)$, 
the Orlik-Solomon algebra $\OS^\bullet(\A)$, or 
the Poincar\'e polynomial of $M_2(\A)$ 
(Example \ref{ex:6planes}, Example \ref{ex:disting}, Example \ref{ex:LtoGRVG}). 
These relationships are summarized in Figure \ref{fig:implications}. 
(See also \S \ref{subsec:mot} for details.) 
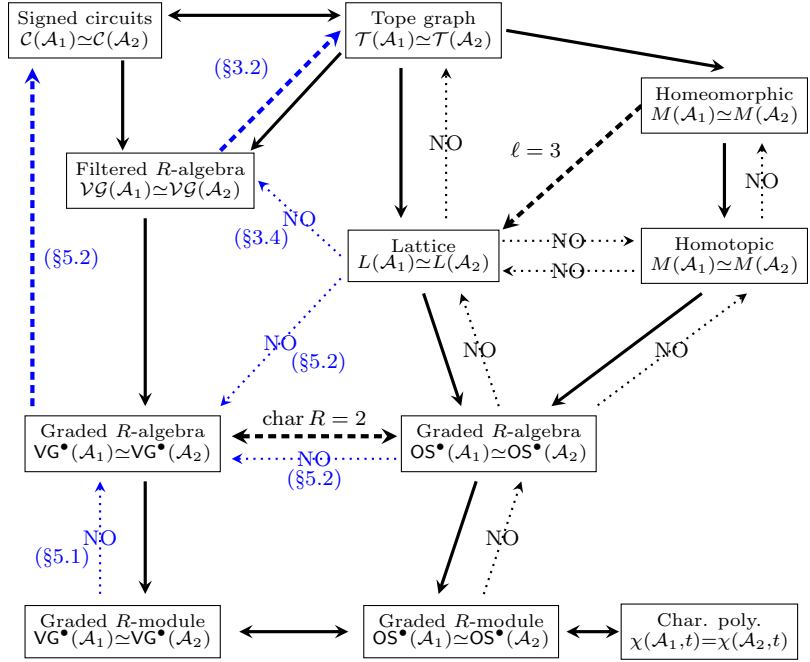
\begin{figure}[htbp]
\centering
\begin{tikzpicture}

\coordinate (CIR) at (-0.5,8); 
\coordinate (VG) at (0.5,6); 
\coordinate (GRVG) at (0,2.5); 
\coordinate (GRAB) at (0,0); 
\coordinate (OM) at (4,8); 
\coordinate (MAT) at (4,5); 
\coordinate (OS) at (4.5,2.5); 
\coordinate (GROS) at (4,0); 
\coordinate (BET) at (4,-1); 
\coordinate (HOMEO) at (7.5,7); 
\coordinate (HTP) at (7.5,5); 
\coordinate (CHAR) at (7.3,0); 
\coordinate (M3) at (-3.5,5); 
\coordinate (COM3) at (-3.5,2.5); 
\coordinate (COM2) at (7.7,2.5);

\draw[double, stealth-stealth, thick] ($(CIR)+(1.1,0.2)$) -- ($(OM)+(-1.1,0.2)$); 
\draw[double, -stealth, thick] ($(CIR)+(0.5,-0.4)$) -- ($(VG)+(-0.5,0.4)$); 
\draw[double, -stealth, thick] ($(OM)+(-1.1,-0.3)$) -- ($(VG)+(1.2,0.4)$); 
\draw[double, -stealth, thick] ($(OM)+(-0.3,-0.5)$) -- ($(MAT)+(-0.3,0.5)$); 
\draw[double, -stealth, thick] ($(OM)+(1.1,0)$) -- ($(HOMEO)+(0,0.5)$); 
\draw[double, -stealth, thick] ($(HOMEO)+(0,-0.5)$) -- ($(HTP)+(0,0.5)$); 
\draw[double, -stealth, thick] ($(HTP)+(-0.1,-0.5)$) -- ($(COM2)+(-0.3,0.6)$); 
\draw[double, -stealth, thick] ($(MAT)+(0,-0.5)$) -- ($(OS)+(-0.5,0.5)$); 
\draw[double, -stealth, thick] ($(OS)+(-0.3,-0.5)$) -- ($(GROS)+(-0.3,0.5)$); 
\draw[double, -stealth, thick] ($(VG)+(-0.2,-0.5)$) -- ($(GRVG)+(0.3,0.5)$); 
\draw[double, -stealth, thick] ($(GRVG)+(0.3,-0.5)$) -- ($(GRAB)+(0.3,0.5)$); 
\draw[double, stealth-stealth, thick] ($(GRAB)+(1.55,0)$) -- ($(GROS)+(-1.5,0)$); 
\draw[double, stealth-stealth, thick] ($(GROS)+(1.4,0)$) -- ($(CHAR)+(-1.2,0)$);

\draw[double, -stealth, thick] ($(CIR)+(-1.1,0)$) -- ($(M3)+(0,0.55)$); 
\draw[double, -stealth, thick] ($(M3)+(0,-0.55)$) -- ($(COM3)+(0,0.6)$); 
\draw[double, stealth-stealth, thick] ($(COM3)+(1.1,0)$) -- ($(GRVG)+(-1.35,0)$); 
\draw[double, stealth-stealth, thick] ($(COM2)+(-1.1,0)$) -- ($(OS)+(1.35,0)$); 

\draw[double, -stealth, densely dashed, thick] ($(HOMEO)+(-1.1,0)$) -- node[anchor=south east] {\footnotesize $\ell=3$}($(MAT)+(1.05,0.35)$); 
\draw[double, stealth-stealth, densely dashed, thick] ($(GRVG)+(1.45,0.1)$) -- node[anchor=south] {\footnotesize $\cha R=2$} ($(OS)+(-1.35,0.1)$); 

\draw[-stealth, double, densely dashed, thick, blue] ($(VG)+(0.8,0.4)$) -- node[anchor=south east] {\footnotesize (\S \ref{subsec:gencodim2})} ($(OM)+(-1.1,0)$); 
\draw[-stealth, dotted, thick, blue] ($(MAT)+(-1.1,0)$) -- node {\footnotesize NO} node[anchor=north east] {\footnotesize (\S \ref{subsec:nonisom})} ($(VG)+(1.3,0)$); 
\draw[-stealth, dotted, thick, blue] ($(MAT)+(-1.1,-0.15)$) -- node[pos=0.25] {\footnotesize NO} node[anchor=south east, pos=0.25] {\footnotesize (\S \ref{subsec:signedcir})} ($(COM3)+(1.1,0.5)$); 
\draw[-stealth, dotted, thick, blue] ($(MAT)+(-1.1,-0.3)$) -- node {\footnotesize NO} node[anchor=north west] {\footnotesize (\S \ref{subsec:signedcir})} ($(GRVG)+(1.3,0.5)$); 
\draw[stealth-, dotted, thick, blue] ($(GRVG)+(1.45,-0.2)$) -- node {\footnotesize NO} node[anchor=north] {\footnotesize (\S \ref{subsec:signedcir})} ($(OS)+(-1.35,-0.2)$); 
\draw[-stealth, double, densely dashed, thick, blue] ($(GRVG)+(-1.2,0.5)$) -- node[anchor=north west] {\footnotesize (\S \ref{subsec:signedcir})}    ($(CIR)+(-0.7,-0.5)$);


\draw[-stealth, dotted, thick] ($(OS)+(0,0.5)$) -- node {\footnotesize NO}($(MAT)+(0.5,-0.5)$); 
\draw[-stealth, dotted, thick] ($(HTP)+(0.5,0.5)$) -- node {\footnotesize NO}($(HOMEO)+(0.5,-0.5)$); 
\draw[-stealth, dotted, thick] ($(MAT)+(1.05,0.2)$) -- node {\footnotesize NO}($(HTP)+(-1.2,0.2)$); 
\draw[-stealth, dotted, thick] ($(HTP)+(-1.2,-0.2)$) -- node {\footnotesize NO}($(MAT)+(1.05,-0.2)$); 
\draw[-stealth, dotted, thick] ($(MAT)+(0.3,0.5)$) -- node {\footnotesize NO}($(OM)+(0.3,-0.5)$); 
\draw[-stealth, dotted, thick] ($(GROS)+(0.3,0.5)$) --node {\footnotesize NO} ($(OS)+(0.3,-0.5)$); 
\draw[stealth-, dotted, thick] ($(HTP)+(+0.5,-0.5)$) -- node {\footnotesize NO} ($(COM2)+(0.3,0.5)$);

\draw[stealth-, dotted, thick, blue] ($(GRVG)+(-0.3,-0.5)$) -- node {\footnotesize NO} node[anchor=north east] {\footnotesize (\S\ref{subsec:sq0})
} ($(GRAB)+(-0.3,0.5)$); 


\draw (VG) node[fill=white, draw=black] 
{$\substack{\text{Filtered $R$-algebra}\\ \VG(\A_1)\simeq \VG(\A_2)}$}; 

\draw (GRVG) node[fill=white, draw=black] 
{$\substack{\text{Graded $R$-algebra}\\ \grVG^\bullet(\A_1)\simeq \grVG^\bullet(\A_2)}$}; 

\draw (GRAB) node[fill=white, draw=black] 
{$\substack{\text{Graded $R$-module}\\ \grVG^\bullet(\A_1)\simeq \grVG^\bullet(\A_2)}$}; 

\draw (CIR) node[fill=white, draw=black] 
{$\substack{\text{Signed circuits}\\ \circuit(\A_1)\simeq\circuit(\A_2)}$}; 

\draw (OM) node[fill=white, draw=black] 
{$\substack{\text{Tope graph}\\ \Tope(\A_1)\simeq\Tope(\A_2)}$}; 

\draw (MAT) node[fill=white, draw=black] 
{$\substack{\text{Lattice}\\ L(\A_1)\simeq L(\A_2)}$}; 

\draw (OS) node[fill=white, draw=black] 
{$\substack{\text{Graded $R$-algebra}\\ \OS^\bullet(\A_1)\simeq\OS^\bullet(\A_2)}$}; 

\draw (GROS) node[fill=white, draw=black] 
{$\substack{\text{Graded $R$-module}\\ \OS^\bullet(\A_1)\simeq\OS^\bullet(\A_2)}$}; 

\draw (HOMEO) node[fill=white, draw=black] 
{$\substack{\text{Homeomorphic}\\ M_2(\A_1)\simeq M_2(\A_2)}$}; 

\draw (HTP) node[fill=white, draw=black] 
{$\substack{\text{Homotopic}\\ M_2(\A_1)\simeq M_2(\A_2)}$}; 

\draw (CHAR) node[fill=white, draw=black] 
{$\substack{\text{Char. poly.}\\ \chi(\A_1, t)=\chi(\A_2,t)}$}; 

\draw (M3) node[fill=white, draw=black] 
{$\substack{\text{Homotopy equiv.}\\ \text{$3$-plexification}\\ M_3(\A)\simeq M_3(\A_2)}$}; 

\draw (COM3) node[fill=white, draw=black] 
{$\substack{\text{Cohom. ring}\\ H^\bullet(M_3(\A_1))\\ \simeq H^\bullet(M_3(\A_2))}$}; 

\draw (COM2) node[fill=white, draw=black] 
{$\substack{\text{Cohom. ring}\\ H^\bullet(M_2(\A_1))\\ \simeq H^\bullet(M_2(\A_2))}$}; 





\end{tikzpicture}
\caption{Solid doubled arrows are the known implications. 
Dashed doubled arrows represent conditional implications. 
Dotted arrows labeled ``NO'' indicate that counterexamples are known. 
Blue arrows correspond to implications addressed in this paper.}
\label{fig:implications}
\end{figure}

Finally, we will formulate a conjectural reconstruction algorithm 
for oriented matroids 
from VG algebras (Conjecture \ref{conj:algorithm}, Remark \ref{rem:reconstgr}). 

The paper is organized as follows. 
In \S \ref{sec:prel}, we fix notations and recall basic facts about 
hyperplane arrangements, VG algebras, and oriented matroids. 
As background, we also review the known implications 
among these notions (Figure \ref{fig:implications}). 

In \S \ref{sec:main}, we reconstruct tope graphs 
from filtered VG algebras. The key idea is that the set of 
chambers can be canonically identified with the set of primitive 
idempotent elements in $\VG(\A)$. 
A crucial observation is that the set of Heaviside functions 
can be recovered as the set of 
idempotent elements in $\Fil^1\VG(\A)$ (Lemma \ref{lem:equal}), 
under the assumption that $\A$ is generic in codimension $2$. 
We also exhibit a pair of generic arrangements that share 
the same intersection lattice but have non-isomorphic VG algebras. 

In \S \ref{sec:conj}, we analyze the set of 
idempotent elements in $\Fil^1\VG(\A)$, which we call 
generalized Heaviside functions (Theorem \ref{thm:genHeav}). 
Based on this, we formulate a non-deterministic algorithm that 
produces a graph from the filtered VG algebra. 
We conjecture that the algorithm always terminates 
with the tope graph. We illustrate the procedure 
for the $A_3$ arrangement in Example \ref{ex:nonHeav}. 

In \S \ref{sec:vgsign}, we discuss how to reconstruct 
oriented matroids from the graded VG algebra. First, 
we describe the set of square-zero elements in $\grVG^1(\A)$, 
which is closely related to generalized Heaviside 
functions. Then, again assuming that $\A$ is 
generic in codimension $2$, we show that the set of 
signed circuits can be recovered from the square-zero 
elements using the presentation of the graded VG algebra. 

\section{Preliminaries}
\label{sec:prel}

Unless otherwise stated, $R$ denotes an integral domain, and 
$\A$ always denotes a real central hyperplane arrangement. 

\subsection{Hyperplane arrangements and oriented matroids}
\label{subsec:hyparr}

Let $\A=\{H_1, \dots, H_n\}$ be a linear hyperplane arrangement 
in the real vector space $V=\R^\ell$. The complement 
$V\setminus\bigcup_{i=1}^nH_i$ decomposes into 
disjoint open cones, called \emph{chambers} 
(or \emph{topes} in the context of oriented matroids). 
We denote the set of all chambers by $\ch(\A)$. 

Choose a defining linear form $\alpha_i\in V^*$ for each 
hyperplane $H_i=\alpha_i^{-1}(0)$. 
The positive and negative half-spaces are denoted by 
$H_i^+=\alpha_i^{-1}(\R_{>0})$ and $H_i^-=\alpha_i^{-1}(\R_{<0})$, 
respectively. 
For chambers $C, C'\in\ch(\A)$, the set of separating hyperplanes 
is defined by 
\begin{equation}
\Sep(C, C')=\{
H\in\A\mid \text{$H$ separates $C$ and $C'$}\}. 
\end{equation}
The \emph{tope graph} $\Tope(\A)$ 
(or $\Tope(\alpha_1, \dots, \alpha_n)$) of $\A$ is 
the finite graph $(\ch(\A), E)$ with vertex set $\ch(\A)$ 
and edge set 
\begin{equation}
E=\{\{C, C'\}\mid \#\Sep(C, C')=1\}. 
\end{equation}
The graph distance between two vertices $C$ and $C'$ is 
given by $d(C, C')=\#\Sep(C, C')$. 

The arrangement $\A$ (or more precisely, the tuple of defining 
linear forms 
$\bm{\alpha}=(\alpha_1, \dots, \alpha_n)\in (V^*)^n$) determines an 
oriented matroid. 
One possible formulation is via the set of covectors 
\begin{equation}
\covec (\bm{\alpha})=
\{(\sgn \alpha_1(v), \dots, \sgn \alpha_n(v))
\in\{+, -, 0\}^n\mid v\in V\}, 
\end{equation}
where $\sgn: \R \to \{\pm, 0\}$ is the usual sign function, i.e., 
$\sgn(x) = +$ if $x > 0$, $\sgn(x) = -$ if $x < 0$, and 
$\sgn(x) = 0$ if $x = 0$. 

The sign vectors of the coefficients of linear dependence 
relations are called \emph{vectors}, and the set of vectors is 
denoted by 
\begin{equation}
\vecto (\bm{\alpha})=
\{
(\sgn \lambda_1, \dots, \sgn \lambda_n)
\in\{+, -, 0\}^n
\mid
\lambda_1\alpha_1+\dots+\lambda_n\alpha_n=0, (\lambda_i\in\R)
\}. 
\end{equation}
For $\bm{\sigma}\in\vecto(\bm{\alpha})$, 
define the support, positive support, and negative support by 
$\Supp(\bm{\sigma})=\{i\in[n]\mid\sigma_i\neq 0\}$, 
$\Supp^+(\bm{\sigma})=\{i\in[n]\mid\sigma_i=+\}$, and 
$\Supp^-(\bm{\sigma})=\{i\in[n]\mid\sigma_i=-\}$, respectively. 
A support-minimal dependent set is called a \emph{circuit}. 
The set of signed circuits is defined as 
\begin{equation}
\circuit(\bm{\alpha})=
\left\{
(\sgn \lambda_1, \dots, \sgn \lambda_n)
\middle| 
\ 
  \begin{lgathered} 
\lambda_1\alpha_1+\dots+\lambda_n\alpha_n=0 \mbox{ is a}\\
\mbox{minimal dependence relation}
  \end{lgathered} 
\right\}, 
\end{equation}
For a signed circuit 
$\bm{\sigma}=(\sigma_1, \dots, \sigma_n)\in\circuit(\bm{\alpha})$, 
one has 
\begin{equation}
\bigcap_{i\in\Supp^+(\bm{\sigma})}H_i^+\cap
\bigcap_{i\in\Supp^-(\bm{\sigma})}H_i^-=\emptyset.
\end{equation}

Note that for each hyperplane $H_i$, there are two possible 
choices of defining linear form, $\alpha_i$ and $-\alpha_i$. 
Changing the sign of $\alpha_i$ does not 
affect the hyperplane arrangement itself. We say that 
covectors $\covec (\alpha_1, \dots, \alpha_n)$ and 
$\covec (\beta_1, \dots, \beta_n)$ are 
\emph{reorientation equivalent} if there exist 
$\eps_1, \dots, \eps_n\in\{\pm 1\}$ such that 
\begin{equation}
\covec (\alpha_1, \dots, \alpha_n)=
\covec (\eps_1\beta_1, \dots, \eps_n\beta_n). 
\end{equation}
Two oriented matroids are said to be \emph{isomorphic} if 
they are equivalent under a reorientation and a permutation of 
the ground set $[n]=\{1, \dots, n\}$. 
If two oriented matroids are isomorphic, then 
clearly their tope graphs are isomorphic. The converse is also 
true. 
\begin{proposition}[\cite{ori-mat}, Theorem 4.2.14]
\label{prop:graphisom}
Two oriented matroids are isomorphic if and only if 
their tope graphs are isomorphic. 
\end{proposition}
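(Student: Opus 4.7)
The forward implication is immediate: a reorientation combined with a permutation of $[n]$ induces a bijection on topes that preserves the separation function $\Sep$ and hence adjacency, so isomorphic oriented matroids have isomorphic tope graphs. The content lies in the converse, and the plan is to recover from the bare graph $\Tope(\A)$ a collection of sign vectors in $\{+,-\}^n$, well defined up to reorientation and relabeling of $[n]$.

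The first step is to recognize $\Tope(\A)$ as an antipodal partial cube: the sign-vector map
\begin{equation*}
C \;\longmapsto\; (\sgn\alpha_1(v_C), \dots, \sgn\alpha_n(v_C)) \in\{+,-\}^n,
\end{equation*}
evaluated at any interior point $v_C$ of $C$, embeds $\Tope(\A)$ isometrically into the hypercube, so graph distance coincides with Hamming distance and with $\#\Sep(C,C')$. In particular the distance $d$, and the involution $C \mapsto -C$ sending each tope to its antipode, are intrinsic to the graph.

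The second step is to recover the partition of edges by hyperplane using the Djokovi\'c--Winkler relation: declare edges $\{C_1,C_2\}$ and $\{C_3,C_4\}$ equivalent when
\begin{equation*}
d(C_1,C_3)+d(C_2,C_4) \;\neq\; d(C_1,C_4)+d(C_2,C_3).
\end{equation*}
In any partial cube this is an equivalence relation, and in the tope graph of an oriented matroid its classes $E_1,\dots,E_n$ are exactly the sets of edges that cross the individual hyperplanes. Fixing a base tope $T_0$, I then assign to each tope $T$ and each class $E_i$ a sign $\sigma_T(i) \in \{+,-\}$ by the parity of the number of $E_i$-edges traversed along any geodesic from $T_0$ to $T$; a different choice of $T_0$ corresponds to a global reorientation, and a different labeling of the classes corresponds to a permutation of $[n]$.

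The final step is to invoke the standard fact that the set of topes of a simple oriented matroid determines it (its covectors arise as componentwise limits of topes across the corresponding faces), so that any graph isomorphism $\Tope(\A_1)\to\Tope(\A_2)$ transports the $\Theta$-partition and the base-tope construction to produce an isomorphism of oriented matroids in the sense of \S\ref{subsec:hyparr}. The main obstacle is verifying that the $\Theta$-classes really coincide with the hyperplane cuts: this requires both that $\Theta$ be transitive (Winkler's characterization of partial cubes) and that the antipodality/convexity properties special to tope sets rule out the pathologies occurring for generic partial cubes. These are the classical inputs recorded in the reference cited in the statement.
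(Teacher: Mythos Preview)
The paper does not supply its own proof of this proposition: it is quoted verbatim as Theorem~4.2.14 of \cite{ori-mat} and used as a black box. Your sketch is a faithful outline of the standard argument from that reference (tope graphs are antipodal partial cubes; the Djokovi\'c--Winkler relation $\Theta$ recovers the parallel classes of edges, hence the hyperplanes; a choice of base tope yields sign vectors up to reorientation; and the tope set of a simple oriented matroid determines the full covector set). There is nothing to compare against in the paper itself, and your account correctly identifies both the mechanism and the nontrivial inputs that have to be imported from the cited source.
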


\subsection{Intersection lattice and Orlik-Solomon algebra}
\label{subsec:OSalg}

See \cite{orl-ter} for details of this section. 

The set of intersections is defined by 
\begin{equation}
L(\A)=
\left\{
\left.
\bigcap_{H\in\mathcal{B}}H
\right|
\mathcal{B}\subset\A
\right\} 
\end{equation}
with a partial order given by 
$X\leq Y\Longleftrightarrow X\supseteq Y$ 
($X, Y\in L(\A)$), which is called the intersection lattice. 
The poset is called the \emph{intersection lattice}. 
The data of the intersection lattice $L(\A)$ is equivalent to 
the underlying matroid of $(\alpha_1, \dots, \alpha_n)$.  
Recall that the M\"obius function $\mu: L(\A)\to\Z$ is defined by 
$\mu(V)=1$ and $\mu(X)=-\sum_{V\leq Y<X}\mu(Y)$ for $V<X$. 
The \emph{characteristic polynomial} of $\A$ is then defined as 
$\chi(\A, t)=\sum_{X\in L(\A)}\mu(X)t^{\dim X}$. 
By the definition of M\"obius function, if $\A\neq\emptyset$, 
we have $\chi(\A, 1)=0$, equivalently, $(t-1)|\chi(\A, t)$. 

Let us recall the definition of the Orlik-Solomon algebra. 
Let $E=Re_1\oplus\cdots\oplus Re_n$ be a free $R$-module of 
rank $n$, and let $\wedge E=\bigoplus_{k=0}^n E^{\wedge k}$ be the 
exterior algebra. Define a linear map $\partial: E^{\wedge k}
\to E^{\wedge(k-1)}$ by 
\begin{equation}
\label{eq:OSrel}
\partial e_{i_1, \dots, i_k}=\sum_{p=1}^k(-1)^{p-1}
e_{i_1}\wedge\cdots\wedge\widehat{e_{i_p}}\wedge\cdots
\wedge e_{i_k}. 
\end{equation}
A subset $S=\{i_1, \dots, i_k\}\subset [n]$ is 
called \emph{dependent} if the intersection 
$H_S:=H_{i_1}\cap\cdots\cap H_{i_k}$ has codimension 
strictly smaller than $k$. 
The Orlik-Solomon algebra $\OS^\bullet_R(\A)$ is defined as 
the quotient algebra 
\begin{equation}
\OS_R^\bullet(\A)=
\frac{\wedge E}{\langle \partial e_S\mid S\subset 
[n]\mbox{ is dependent}\rangle}. 
\end{equation}
Recall that 
$M_2(\A)\simeq\C^\ell\setminus\bigcup_{H\in\A}H\otimes\C$ 
the complexified complement of $\A$. Orlik and Solomon proved 
that the cohomology ring $H^\bullet(M_2(\A), R)$ is isomorphic to 
$\OS^\bullet_R(\A)$ as $R$-algebras. 
When $R=\R$, considering de Rham presentation of 
$H^\bullet(M_2(\A), \R)$, 
the isomorphism is realized by the map 
$e_i\mapsto d\log\alpha_i/2\pi\sqrt{-1}$. 
The rank $b_k=\rank\OS^k(\A)$ is equal to the $k$-th Betti 
number of the space $M_2(\A$). It is also known that the coefficients 
of the characteristic polynomial are the Betti numbers with alternating 
sign. More precisely, 
\begin{equation}
\chi(\A, t)=t^\ell-b_1t^{\ell-1}+b_2t^{\ell-2}-\cdots+(-1)^\ell b_\ell. 
\end{equation}
We also note that the sum of Betti numbers is equal to the 
number of chambers: 
$\#\ch(\A)=(-1)^\ell\chi(\A, -1)=\sum_{k=0}^\ell b_k$ \cite{zas}. 

\subsection{Filtered and graded Varchenko-Gelfand algebras}
\label{subsec:vgalg}

The set of all $R$-valued functions on $\ch(\A)$ is called the 
\emph{Varchenko-Gelfand (VG) algebra}, denoted 
\[
\VG(\A)_R=\{f: \ch(\A)\longrightarrow R\}. 
\]
We sometimes omit the subscript $R$. 
As an $R$-algebra, $\VG(\A)_R$ is isomorphic to the direct 
product $R^{\ch(\A)}$. Recall that an element $f\in\VG(\A)$ 
is called \emph{idempotent} if $f^2=f$, and 
\emph{primitive idempotent} 
if it can not be written as the sum of two nonzero 
idempotent elements. 
For $C\in\ch(\A)$, 
let $1_C:\ch(\A)\to\R$ be the characteristic function, 
defined by $1_C(C') = \delta_{C, C'}$. 
Then the sets of idempotents and primitive idempotents are given by 
\begin{equation}
\label{eq:primid}
\begin{aligned}
\Idem(\VG(\A))=&\left\{\left. \sum_{C\in S}1_C\right| 
S\subset\ch(\A)\right\}\\
\PrimIdem(\VG(\A))=&\{1_C\mid C\in\ch(\A)\}. 
\end{aligned}
\end{equation}
Thus the set of primitive idempotents is canonically identified 
with $\ch(\A)$. 

\begin{definition}
For each hyperplane $H_i$, 
define $x_i^+:\ch(\A)\to R$ by 
\begin{equation}
\label{eq:01prod}
x_i^+(C)=
\begin{cases}
1, & (C\subset H_i^+)\\
0, & (C\not\subset H_i^+).
\end{cases}
\end{equation}
Similarly, define $x_i^-$ with respect to $H_i^-$. 
These are called the \emph{Heaviside functions}. 
They satisfy the relations 
\begin{equation}
x_i^++x_i^-=1, \ \ \ \ \ (x_i^\pm)^2=x_i^\pm, \ \ \ \ \ x_i^+\cdot x_i^-=0. 
\end{equation}
\end{definition}
The VG algebra $\VG(\A)$ is generated by 
Heaviside functions (see Proposition \ref{prop:vg1}). 
This yields a natural filtration according to degree 
in the Heaviside functions. 
\begin{definition}
Define the subspace $\Fil^k\VG(\A)\subset\VG(\A)$ by 
\begin{equation}
\Fil^k\VG(\A)=
\left\{
y\in\VG(\A)
\middle| \ 
\begin{lgathered} 
\exists 
G(z_1, \dots, z_n)\in R[z_1, \dots, z_n], \\
\deg G\leq k, \mbox{ and }G(x_1^+, \dots, x_n^+)=y
\end{lgathered} 
\right\}. 
\end{equation}
We set $\Fil^{-1}\VG(\A)=\{0\}$. 
Clearly, $\Fil^{0}\VG(\A)=R$. The increasing filtration 
$0=\Fil^{-1}\subset\Fil^0\subset\Fil^1\subset
\Fil^2\subset\cdots$ is called the 
\emph{VG filtration}. 
Equipped with this filtration, $\VG(\A)$ is sometimes 
referred to as the \emph{filtered VG algebra}.) 
\end{definition}
Since $\Fil^i\cdot\Fil^j\subset\Fil^{i+j}$, we may define 
\begin{equation}
\grVG^k(\A)=\Fil^k/\Fil^{k-1}, \ \ 
\grVG^\bullet(\A)=
\bigoplus_{k\geq 0}\grVG^k(\A), 
\end{equation}
which we call the \emph{graded VG algebra}. 

\begin{proposition}[Varchenko and Gelfand \cite{var-gel}]
\label{prop:vg1}\ 
\begin{itemize}
\item[(1)] 
$\Fil^\ell\VG(\A)=\VG(\A)$. 
\item[(2)] 
$\grVG^k(\A)\simeq\OS^k(\A)$ as $R$-modules. In particular, 
$\rank\grVG^k(\A)=b_k$. 
\item[(3)] 
If $R=\F_2$ (or more generally, if $\cha R=2$), 
then $\grVG^\bullet(\A)\simeq\OS^\bullet(\A)$ as $\F_2$-algebras. 
\end{itemize}
\end{proposition}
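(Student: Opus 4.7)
The plan is to work from a presentation of $\VG(\A)$ generated by the Heaviside functions $x_1^+,\dots,x_n^+$, subject to two families of relations: the Boolean relations $(x_i^+)^2 = x_i^+$, and the \emph{circuit relations} $\prod_{i\in P}x_i^+\prod_{j\in N}x_j^- = 0$, one for each signed circuit $\bm{\sigma}\in\circuit(\bm{\alpha})$ with $P=\Supp^+(\bm{\sigma})$ and $N=\Supp^-(\bm{\sigma})$, which hold because the corresponding intersection of open half-spaces is empty. Substituting $x_j^-=1-x_j^+$ rewrites each circuit relation as a polynomial identity in the $x_i^+$'s whose top-degree term is $\pm\prod_{i\in\Supp(\bm{\sigma})}x_i^+$. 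For part~(1), I would use these to build a \emph{straightening algorithm}: fix a linear order on $[n]$ and apply the circuit relation attached to any dependent subset of the index set of a monomial to rewrite that monomial as a sum of monomials indexed by strictly smaller sets, modulo lower-degree terms. Iterating terminates in monomials indexed by independent subsets, which have size at most $\ell=\rank\A$; hence $\Fil^\ell\VG(\A)=\VG(\A)$.

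For part~(2), I would refine the straightening so that the reduction pivots on the \emph{minimum} element of each circuit, in the usual Orlik--Solomon manner, so that the output monomials are indexed by no-broken-circuit (NBC) sets. This shows that the NBC monomials $u_{i_1}\cdots u_{i_k}$ span $\grVG^k(\A)$, where $u_i$ is the class of $x_i^+$ in $\Fil^1/\Fil^0$, yielding the upper bound $\rank\grVG^k(\A)\leq b_k$. For the matching lower bound I would compare totals using Zaslavsky's formula: $\sum_k\rank\grVG^k(\A)=\rank\VG(\A)=\#\ch(\A)=\sum_k b_k$ forces equality in each degree. The $R$-module isomorphism $\OS^k(\A)\cong\grVG^k(\A)$ is then defined by sending the standard NBC basis of $\OS^k(\A)$ to the NBC basis of $\grVG^k(\A)$.

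For part~(3), I would verify that in characteristic $2$ this module map is in fact an algebra isomorphism. Over any $R$ one has $u_i^2=0$ in $\grVG^\bullet(\A)$, because $(x_i^+)^2-x_i^+\in\Fil^1$ has trivial class in $\Fil^2/\Fil^1$; and in characteristic $2$, commutativity of $\VG(\A)$ is indistinguishable from anticommutativity of the exterior algebra. Reading off the top-degree part of each circuit relation in $\Fil^{|S|}/\Fil^{|S|-1}$, with signs collapsed by $-1=1$, reproduces the Orlik--Solomon boundary relation $\partial e_{\Supp(\bm{\sigma})}=0$ after multiplication by the appropriate $u_i$. These relations, together with $u_i^2=0$ and (anti)commutativity, present $\OS^\bullet(\A)_{\F_2}$, so the module map of part~(2) is automatically multiplicative.

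The main obstacle is the lower bound in part~(2). A direct proof that the NBC monomials are linearly independent in $\grVG^k(\A)$ would be delicate, so the argument relies on the global dimension count via Zaslavsky. For this to succeed, the straightening algorithm must produce an NBC spanning set in \emph{every} degree simultaneously, and one must be careful that the algorithm does not unexpectedly drop the top-degree part of a relation into strictly lower filtration in a way that loosens the per-degree bound $\rank\grVG^k(\A)\leq b_k$. Organizing the straightening so that these bounds are tight in every degree at once is the technical heart of the argument, after which parts~(2) and~(3) follow cleanly.
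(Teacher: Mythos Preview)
The paper does not prove this proposition; it is quoted from \cite{var-gel}, and the relevant algebraic input is recorded separately as the presentations in Propositions~\ref{prop:presenVG} and~\ref{prop:presenGRVG}. So the comparison is between your sketch and those presentations.

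Your argument for part~(1) is fine, but part~(2) has a real gap. The relation you use, $\prod_{i\in P}x_i^+\prod_{j\in N}x_j^-=0$, indeed has top-degree term $\pm\prod_{i\in\Supp(\bm\sigma)}x_i^+$; hence in the associated graded it only yields $\prod_{i\in\Supp(\bm\sigma)}u_i=0$. That relation, together with $u_i^2=0$, reduces every monomial to one indexed by an \emph{independent} set, not an NBC set. Since the number of independent $k$-subsets generally exceeds $b_k$, your upper bound $\rank\grVG^k(\A)\le b_k$ is not established, and the Zaslavsky dimension count cannot close degree by degree. The phrase ``in the usual Orlik--Solomon manner'' hides the missing step: the Orlik--Solomon reduction needs a relation among the degree-$(|C|-1)$ monomials $u_{C\setminus\{i_p\}}$, which your single top-degree monomial does not provide.

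The remedy is exactly what the paper records in \eqref{eq:vgrel}--\eqref{eq:grVG}: take the \emph{difference} of your relation for $\bm\sigma$ and for $-\bm\sigma$. The degree-$|C|$ terms cancel, and the leading part becomes the degree-$(|C|-1)$ signed sum $\sum_{p}\sgn(\lambda_{i_p})\,u_{C\setminus\{i_p\}}$. This is precisely the relation that drives NBC straightening (pivoting on $\min C$), gives the sharp bound $\rank\grVG^k(\A)\le b_k$, and in characteristic~$2$ literally collapses to $\partial e_{\Supp(\bm\sigma)}=0$, which is what you need for part~(3). Your part~(3) inherits the same defect: ``multiplying by the appropriate $u_i$'' cannot recover $\partial e_C=0$ from $u_C=0$; you must start from the degree-$(|C|-1)$ relation.
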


\begin{proposition}[\cite{var-gel}] 
\label{prop:presenVG}
Let $\mathcal{I}_\A\subset R[e_1, \dots, e_n]$ be the ideal generated 
by the following elements: 
\begin{itemize}
\item 
$e_i^2-e_i$, for $i=1, \dots, n$; 
\item 
for each signed circuit $\bm{\sigma}\in\circuit(\bm{\alpha})$, 
\begin{equation}
\label{eq:vgrel}
\prod\limits_{i\in\Supp^+(\bm{\sigma})}e_i\cdot
\prod\limits_{i\in\Supp^-(\bm{\sigma})}(e_i-1)
-
\prod\limits_{i\in\Supp^+(\bm{\sigma})}(e_i-1)\cdot
\prod\limits_{i\in\Supp^-(\bm{\sigma})}e_i. 
\end{equation}
\end{itemize}
Then the map $e_i\mapsto x_i^+$ gives an isomorphism of 
$R$-algebras $R[e_1, \dots, e_n]/\mathcal{I}_\A
\stackrel{\simeq}{\longrightarrow}
\VG(\A)$. 
\end{proposition}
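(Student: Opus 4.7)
The plan is to factor the algebra map $\phi\colon R[e_1,\ldots,e_n]\to\VG(\A)_R$, $e_i\mapsto x_i^+$, through the Boolean function algebra. Since each $x_i^+$ is idempotent, $\phi$ descends to a surjective map
\[
\bar\phi\colon R[e_1,\ldots,e_n]/\langle e_i^2-e_i\rangle\longrightarrow\VG(\A)_R.
\]
The source is canonically isomorphic to $R^{\{0,1\}^n}$, the $R$-algebra of functions on the Boolean cube, via $e_i\mapsto(v\mapsto v_i)$. Identifying $\{0,1\}^n$ with $\{+,-\}^n$ by $v_i=1\Leftrightarrow\sigma_i=+$, the map $\bar\phi$ becomes the restriction map: the delta function $1_{\{v\}}$ goes to $1_C$ if $v$ is the sign vector of a chamber $C$, and to $0$ otherwise. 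Hence $\ker\bar\phi$ is freely spanned, as an $R$-module, by $\{1_{\{v\}}\mid v\text{ is not a tope of }\A\}$.

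For each signed circuit $\bm\sigma$, a direct computation in $R^{\{0,1\}^n}$ yields
\[
\prod_{i\in\Supp^+(\bm\sigma)}e_i\cdot\prod_{i\in\Supp^-(\bm\sigma)}(e_i-1)=(-1)^{|\Supp^-(\bm\sigma)|}\cdot 1_{A^+(\bm\sigma)},
\]
where $A^+(\bm\sigma)=\{v\in\{0,1\}^n\mid v_i=1\text{ on }\Supp^+(\bm\sigma),\ v_i=0\text{ on }\Supp^-(\bm\sigma)\}$; the other term of the relation is an analogous multiple of $1_{A^-(\bm\sigma)}$. The emptiness property of signed circuits stated in the excerpt says precisely that every element of $A^+(\bm\sigma)\cup A^-(\bm\sigma)$ is a non-tope, so each circuit relation lies in $\ker\bar\phi$; hence $\mathcal{I}_\A\subset\ker\phi$.

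For the reverse inclusion, I exploit that $R^{\{0,1\}^n}$ is a product of copies of $R$ whose minimal idempotents are the $1_{\{v\}}$. Since $A^+(\bm\sigma)\cap A^-(\bm\sigma)=\emptyset$, multiplying a circuit relation by $1_{\{v\}}$ for any $v\in A^+(\bm\sigma)$ kills the $A^-$-term and isolates $\pm 1_{\{v\}}$ inside the image of $\mathcal{I}_\A$; the case $v\in A^-(\bm\sigma)$ is analogous. Thus $1_{\{v\}}$ belongs to (the image of) $\mathcal{I}_\A$ whenever the sign vector of $v$ conformally contains some signed circuit. I then invoke the oriented-matroid Farkas lemma: a sign vector in $\{+,-\}^n$ fails to be a tope if and only if it conformally contains some signed circuit. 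This yields $\ker\bar\phi\subset\mathcal{I}_\A$, completing the proof.

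The main obstacle is the final Farkas-type step, which is standard in oriented-matroid theory but must be invoked with the correct sign conventions matching the definition of $\circuit(\bm\alpha)$; the remainder of the argument reduces to sign bookkeeping for idempotents in $R^{\{0,1\}^n}$.
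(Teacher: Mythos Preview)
The paper does not prove this proposition at all; it is quoted from \cite{var-gel} without argument. Your proposal is a correct, self-contained proof and in fact follows the standard route: pass to the Boolean quotient $R[e_1,\dots,e_n]/\langle e_i^2-e_i\rangle\cong R^{\{0,1\}^n}$, identify $\bar\phi$ with restriction along the tope embedding $\ch(\A)\hookrightarrow\{0,1\}^n$, and then match $\ker\bar\phi$ with the ideal generated by the circuit relations.

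Two small points worth tightening. First, when you ``multiply a circuit relation by $1_{\{v\}}$'' you are working in the quotient $R^{\{0,1\}^n}$, so what you are really showing is $\mathcal{I}_\A/\langle e_i^2-e_i\rangle=\ker\bar\phi$; since $\langle e_i^2-e_i\rangle\subset\mathcal{I}_\A$, this is equivalent to $\mathcal{I}_\A=\ker\phi$, but it would be cleaner to say so explicitly. Second, the ``Farkas lemma'' you invoke is exactly the duality statement that a full-support sign vector is a covector (hence a tope) if and only if it is orthogonal to every signed circuit; in the realizable setting of the paper this is Gordan's theorem applied to the system $\alpha_i(v)\cdot X_i>0$, and you should cite it as such (or as \cite[Corollary~3.4.6 or Proposition~3.7.12]{ori-mat}) so that the sign conventions visibly match the paper's definition of $\circuit(\bm\alpha)$ and equation~(2.5).
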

Let $\bm{\sigma}\in\circuit(\bm{\alpha})$ be a signed circuit 
with $\Supp(\bm{\sigma})=\{i_1, \dots, i_k\}$. Then there exists a 
unique (up to scalar) nontrivial linear relation 
\begin{equation}
\label{eq:linearrelalpha}
\lambda_{i_1}\alpha_{i_1}+\dots+\lambda_{i_k}\alpha_{i_k}=0. 
\end{equation}
The leading term of \eqref{eq:vgrel} is a degree $k-1$ homogeneous 
polynomial 
\begin{equation}
\label{eq:grVG}
\sum_{p=1}^k\sgn(\lambda_{i_p})\cdot 
e_{\Supp(\bm{\sigma})\setminus\{i_p\}}, 
\end{equation}
where $e_{\Supp(\bm{\sigma})\setminus\{i_k\}}=
e_{i_1}\cdots\widehat{e_{i_p}}\cdots e_{i_k}$.
This relation also appears in the combinatorial analogue of 
the Orlik-Terao algebra \cite{cor-com}. 

\begin{proposition}[\cite{mos-equ}, Theorem 5.9] 
\label{prop:presenGRVG}
Let $I_\A\subset R[e_1, \dots, e_n]$ be the ideal generated 
by: 
\begin{itemize}
\item 
$e_i^2$, for $i=1, \dots, n$; 
\item 
for each signed circuit 
$\bm{\sigma}\in\circuit(\bm{\alpha})$ with 
$\Supp(\bm{\sigma})=\{i_1, \dots, i_k\}$ and 
linear relation \eqref{eq:linearrelalpha}, 
$\sum_{p=1}^k\sgn(\lambda_{i_p})\cdot 
e_{\Supp(\bm{\sigma})\setminus\{i_p\}}$ 
(as in \eqref{eq:grVG})  
\end{itemize}
Then the map $e_i\mapsto \overline{x_i^+}$ induces an isomorphism of 
graded $R$-algebras 
$R[e_1, \dots, e_n]/I_\A
\stackrel{\simeq}{\longrightarrow}
\grVG^\bullet(\A)$. 
\end{proposition}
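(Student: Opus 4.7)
The plan is to construct the candidate isomorphism directly and then verify it is bijective via a rank comparison in each graded degree. First, I define $\phi: R[e_1, \dots, e_n] \to \grVG^\bullet(\A)$ by $e_i \mapsto \overline{x_i^+} \in \grVG^1(\A)$ and check that every generator of $I_\A$ lies in $\ker \phi$. Since $(x_i^+)^2 = x_i^+ \in \Fil^1\VG(\A)$, its image in $\grVG^2 = \Fil^2/\Fil^1$ vanishes, yielding $\phi(e_i^2) = 0$. For a signed circuit $\bm{\sigma}$ with $|\Supp(\bm{\sigma})| = k$, the relation \eqref{eq:vgrel} from \Cref{prop:presenVG} holds in $\VG(\A)$; expanding the two products and cancelling the common degree-$k$ term $e_{\Supp(\bm{\sigma})}$, one sees that the leading degree-$(k-1)$ part is precisely $\sum_p \sgn(\lambda_{i_p})\, e_{\Supp(\bm{\sigma}) \setminus \{i_p\}}$, with all remaining terms of degree $\leq k-2$. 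Hence the class of this leading part in $\grVG^{k-1} = \Fil^{k-1}/\Fil^{k-2}$ is zero, and $\phi$ factors through $I_\A$, yielding a graded $R$-algebra homomorphism $\bar{\phi}: R[e_1,\dots,e_n]/I_\A \to \grVG^\bullet(\A)$.

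Surjectivity of $\bar{\phi}$ is immediate: by the definition of the VG filtration, $\VG(\A)$ is generated as an $R$-algebra by the Heaviside functions $x_i^+ \in \Fil^1$, so $\grVG^\bullet(\A)$ is generated as a graded $R$-algebra by the classes $\overline{x_i^+}$ in degree one.

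The main task is to bound the rank of each graded piece of $R[e_1, \dots, e_n]/I_\A$. I would adapt the classical Orlik--Solomon nbc-basis argument. Fix a linear order on $[n]$ and call a subset $S \subseteq [n]$ \emph{nbc} if it contains no broken circuit $C \setminus \{\min C\}$ of the underlying matroid of $\A$. The classical count gives $\#\{\text{nbc } S : |S|=k\} = b_k$. The goal is to show that the monomials $e_S$ with $S$ nbc span $R[e_1, \dots, e_n]/I_\A$: for any minimal circuit $C = \{i_1 < \cdots < i_k\}$, the corresponding relation from \eqref{eq:grVG} carries the unit coefficient $\sgn(\lambda_{i_1}) = \pm 1$ on the broken-circuit monomial $e_{C \setminus \{i_1\}}$, so it can be solved to express $e_{C \setminus \{i_1\}}$ as a signed sum of $e_{C \setminus \{i_p\}}$ with $p \geq 2$. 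An induction on a suitable monomial order then rewrites every monomial modulo $I_\A$ as an $R$-linear combination of nbc monomials, giving $\rank_R(R[e_1,\dots,e_n]/I_\A)_k \leq b_k$. Since $\bar{\phi}$ is a graded surjection onto $\grVG^k(\A)$, which by \Cref{prop:vg1}(2) is $R$-free of rank $b_k$, it must be an isomorphism in every degree.

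The principal obstacle is the straightening step: one has to verify that these signed boundary relations form a confluent rewriting system whose normal forms are exactly the nbc monomials. This is the signed analogue of Orlik--Solomon's original argument; the signs, being $\pm 1$ inherited from the oriented matroid, do not affect termination or the enumeration of spanning monomials, so the argument carries over essentially as in the unsigned case. A technical point is that the full relations \eqref{eq:vgrel} in $\VG(\A)$ carry lower-order corrections, so the straightening must be performed inside the graded quotient $R[e_1,\dots,e_n]/I_\A$ rather than directly in $\VG(\A)$ for the homogeneous bookkeeping to remain valid.
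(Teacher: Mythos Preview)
The paper does not supply its own proof of this proposition: it is stated as a citation to Moseley \cite{mos-equ}, Theorem~5.9, with only the subsequent remark that the relations \eqref{eq:grVG} reduce to \eqref{eq:OSrel} when $\cha R=2$, and a pointer (Remark~\ref{rem:uniqurel}) to Cordovil's nbc basis.

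Your argument is a correct self-contained proof along the standard Orlik--Solomon/Cordovil lines. The well-definedness and surjectivity of $\bar\phi$ are as you say; the only substantive step is the spanning of the quotient by nbc monomials. Two small clarifications are worth making. First, you should observe explicitly that if $S$ contains a full circuit $C$ then $e_S=0$ in $R[e]/I_\A$: multiplying the circuit relation by $e_{\min C}$ kills every term except $\pm e_C$, since each $e_{C\setminus\{i_p\}}$ with $p\geq 2$ already contains $e_{\min C}$. This lets you assume $S$ is independent in the straightening. Second, you do not need confluence of the rewriting system, only termination: replacing $i_p>i_1=\min C$ by $i_1$ strictly decreases $S$ in the lexicographic order on $|S|$-subsets, so the rewriting halts at an nbc monomial. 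Confluence would give uniqueness of normal forms, but that is irrelevant here since you only want the rank bound $\rank_R(R[e]/I_\A)_k\leq b_k$, and uniqueness then follows a posteriori from the isomorphism with $\grVG^k(\A)$.
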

Note that when $\cha R=2$, relation \eqref{eq:grVG} 
coincides with \eqref{eq:OSrel}, which 
explains the isomorphism in Proposition \ref{prop:vg1} (3). 

\begin{remark}
\label{rem:uniqurel}
Cordovil \cite[Corollary 2.8]{cor-com} proved that 
NBC sets form a basis of $\grVG^\bullet(\A)$. It follows 
that for a signed circuit $\bm{\sigma}\in\circuit(\bm{\alpha})$, 
the elements 
$e_{\Supp(\bm{\sigma})\setminus\{i_2\}}, \dots, 
e_{\Supp(\bm{\sigma})\setminus\{i_k\}}$ 
can be part of a basis of $\grVG^{k-1}(\A)$. 
Moreover: 
\begin{itemize}
\item 
$e_{\Supp(\bm{\sigma})\setminus\{i_1\}}, 
e_{\Supp(\bm{\sigma})\setminus\{i_2\}}, \dots, 
e_{\Supp(\bm{\sigma})\setminus\{i_k\}}$ form a 
circuit in $\grVG^{k-1}(\A)$.
\item 
The relation 
$\sum_{p=1}^k\sgn(\lambda_{i_p})\cdot
e_{\Supp(\bm{\sigma})\setminus\{i_p\}}=0$ is unique 
up to scalar multiple. 
\end{itemize}
\end{remark}

For the inductive arguments, the following results are also useful. 
Fix $H\in\A$. We consider the associated arrangements: 
the deletion $\A':=\A\setminus\{H\}$ and 
the restriction $\A''=\A^H$, where $\A^H$ denotes the induced 
arrangement in the ambient space $H\simeq\R^{\ell-1}$. 
$\VG(\A')$ is naturally embedded into $\VG(\A)$ as a subalgebra. 
Moreover, $\VG(\A)$ and $\VG(\A^H)$ are related as 
follows. 
Let $C\in\ch(\A^H)$ be a chamber of the restriction. Then 
there are exactly two chambers $C^+$ and $C^-$ of $\A$ 
such that 
\begin{itemize}
\item $C^+\subset H^+$ and $C^-\subset H^-$, and  
\item $C$ is contained in the closures of both $C^+$ and $C^-$. 
\end{itemize}
(Figure \ref{fig:rho}.) 
Using  $C^\pm$, define linear maps $\rho^\pm: \VG(\A)\to 
\VG(\A^H), f\mapsto \rho^\pm f$ by 
\begin{equation}
(\rho_H^+f)(C):=f(C^+), \ \ 
(\rho_H^-f)(C):=f(C^-), 
\end{equation}
and set $\rho_H:=\rho_H^+-\rho_H^-$. 

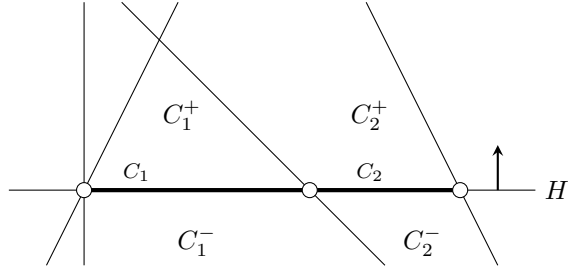
\begin{figure}[htbp]
\centering
\begin{tikzpicture}

\draw (0,1) -- (7,1) node[right] {$H$};
\draw[-stealth, thick] (6.5, 1) --++(0, 0.6);
\draw (0.5,0) -- ++(1.75,3.5);
\draw (1,0) -- ++(0,3.5);
\draw (5,0) -- ++(-3.5,3.5);
\draw (6.5,0) -- ++(-1.75,3.5);

\draw[ultra thick] (1,1) -- (6,1);
\draw (1.7,1) node[above] {\footnotesize $C_1$}; 
\draw (4.8,1) node[above] {\footnotesize $C_2$}; 

\draw (2.3,2) node {$C_1^+$}; 
\draw (2.5,0) node[above] {$C_1^-$}; 
\draw (4.8,2) node {$C_2^+$}; 
\draw (5.5,0) node[above] {$C_2^-$}; 

\filldraw[draw=black, fill=white] (1,1) circle [radius=0.1]; 
\filldraw[draw=black, fill=white] (4,1) circle [radius=0.1]; 
\filldraw[draw=black, fill=white] (6,1) circle [radius=0.1]; 

\end{tikzpicture}
\caption{$C_1, C_2\in\ch(\A^H)$ and their lifts 
$C_1^\pm, C_2^\pm\in\ch(\A)$.}
\label{fig:rho}
\end{figure}

Note that 
$(\rho_H f)(C)=0$ if and only if $f(C^+)=f(C^-)$. Hence 
$\rho_H f=0$ if and only if $f\in\VG(\A)$ descends to $f\in\VG(\A')$. 
We therefore have a short exact sequence 
\begin{equation}
0
\longrightarrow
\VG(\A')
\longrightarrow
\VG(\A)
\stackrel{\rho_H}{\longrightarrow}
\VG(\A^H). 
\end{equation}
Furthermore, Varchenko and Gelfand proved the following. 
\begin{proposition}[\cite{var-gel}]
\label{prop:vg2}
For any $H$ and $0\leq k\leq\ell$, the sequence 
\begin{equation}
\label{eq:shortexact}
0
\longrightarrow
\Fil^k\VG(\A')
\longrightarrow
\Fil^k\VG(\A)
\stackrel{\rho_H}{\longrightarrow}
\Fil^{k-1}\VG(\A^H)
\longrightarrow
0
\end{equation}
is exact. Moreover, $f\in\VG(\A)$ lies in 
$\Fil^k\VG(\A)$ if and only if 
$\rho_H(f)\in\Fil^{k-1}\VG(\A^H)$ for every $H\in\A$. 
\end{proposition}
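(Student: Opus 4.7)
My plan is to prove exactness in three steps: (i) set up an explicit ring-theoretic formula that simultaneously handles surjectivity onto $\Fil^{k-1}\VG(\A^H)$ and the image/kernel inclusions, (ii) close the kernel characterization via a rank count using Proposition~\ref{prop:vg1}(2) together with the Betti-number deletion–restriction identity $b_k(\A) = b_k(\A') + b_{k-1}(\A^H)$, and (iii) deduce the ``moreover'' clause by induction on $|\A|$.

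The key computation rests on the fact that $C^+$ and $C^-$ differ only across $H$. As a consequence, both ring homomorphisms $\rho_H^\pm\colon \VG(\A) \to \VG(\A^H)$ agree on the subalgebra generated by $\{x_i^+\}_{i\ne n}$, sending $x_i^+$ to $\bar x_i^+$, while $\rho_H^+(x_H^+) = 1$ and $\rho_H^-(x_H^+) = 0$. Thus $\rho_H = \rho_H^+ - \rho_H^-$ annihilates the subalgebra generated by $\{x_i^+\}_{i\ne n}$ and satisfies $\rho_H\bigl(x_H^+\cdot p(x_1^+,\dots,x_{n-1}^+)\bigr) = p(\bar x_1^+,\dots,\bar x_{n-1}^+)$. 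Using the degree-decreasing relation $(x_H^+)^2 = x_H^+$, every $f \in \Fil^k\VG(\A)$ can be written as
$$f = G_0(x_1^+,\dots,x_{n-1}^+) + x_H^+\cdot G_1(x_1^+,\dots,x_{n-1}^+)$$
with $\deg G_0 \le k$ and $\deg G_1 \le k-1$. This immediately yields $\rho_H(f) = G_1(\bar x_1^+,\dots,\bar x_{n-1}^+) \in \Fil^{k-1}\VG(\A^H)$ together with the explicit section $g = G(\bar x_1^+,\dots,\bar x_{n-1}^+) \mapsto x_H^+\cdot G(x_1^+,\dots,x_{n-1}^+) \in \Fil^k\VG(\A)$, proving both the image containment and surjectivity. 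The inclusion $\Fil^k\VG(\A') \subseteq \ker(\rho_H|_{\Fil^k\VG(\A)})$ is also immediate from the same computation.

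To finish, Proposition~\ref{prop:vg1}(2) gives $\rank\Fil^k\VG(\A) = \sum_{j\le k} b_j(\A)$, and the Betti deletion–restriction identity then yields $\rank\Fil^k\VG(\A) = \rank\Fil^k\VG(\A') + \rank\Fil^{k-1}\VG(\A^H)$. Combined with the surjectivity just established, this forces the injection $\Fil^k\VG(\A') \hookrightarrow \ker(\rho_H|_{\Fil^k\VG(\A)})$ to have image of full rank. The main obstacle I foresee is upgrading this rank equality to equality of submodules over a general integral domain $R$, since rank equality plus injectivity is not sufficient in general. To handle this, I would pass to the fraction field $K = \operatorname{Frac}(R)$, where the rank count promotes the inclusion to equality of $K$-vector subspaces, and then use that each $\Fil^k\VG(\A)_R$ is free as an $R$-module (coming from the freeness of $\grVG^k \simeq \OS^k$ at each level) to descend the equality back to $R$. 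Finally, for the ``moreover'' clause, given $f$ with $\rho_H(f) \in \Fil^{k-1}\VG(\A^H)$ for every $H$, I would fix one $H$, subtract the explicit lift $x_H^+\cdot G_1(x_1^+,\dots,x_{n-1}^+)$ of $\rho_H(f)$ to land inside $\VG(\A')$, verify that the filtration condition persists under the restriction maps for the smaller arrangement $\A'$, and conclude by induction on $|\A|$.
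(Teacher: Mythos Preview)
The paper does not prove this proposition; it is quoted from \cite{var-gel} without argument, so there is no proof in the paper to compare your attempt against. Your outline is essentially sound.

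Two points deserve tightening. First, for the kernel step, your rank count shows that $\ker(\rho_H|_{\Fil^k})/\Fil^k\VG(\A')$ is torsion; to conclude it vanishes over a general integral domain, observe that this quotient embeds into $\VG(\A')/\Fil^k\VG(\A')$, whose successive graded pieces $\grVG^j(\A')$ are free by Proposition~\ref{prop:vg1}(2), hence the ambient quotient is torsion-free and the torsion submodule must be zero. This is presumably your intended ``descent via freeness,'' but it should be stated. Second, in the induction for the ``moreover'' clause, the phrase ``verify that the filtration condition persists'' hides a genuine step: for $g=f-\tilde f\in\VG(\A')$ and $H'\in\A'$, you must identify $\rho_{H'}^{\A'}(g)$ with $\rho_{H'}^{\A}(g)$ under the natural inclusion $\VG((\A')^{H'})\hookrightarrow\VG(\A^{H'})$, and then invoke the already-established kernel exactness for the smaller arrangement $\A^{H'}$ (inductive hypothesis) to get $\rho_{H'}^{\A}(g)\in\Fil^{k-1}\VG(\A^{H'})\cap\VG((\A')^{H'})=\Fil^{k-1}\VG((\A')^{H'})$.

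One structural caution: in the original Varchenko--Gelfand development, the exact sequence is typically proved first and then used, by induction on $|\A|$, to establish $\rank\grVG^k(\A)=b_k(\A)$. Your route reverses this dependency by invoking Proposition~\ref{prop:vg1}(2). Within this paper that is harmless, since Proposition~\ref{prop:vg1} is stated as an independent citation, but a fully self-contained treatment would fold the kernel exactness and the ``moreover'' clause into a single simultaneous induction on $|\A|$, avoiding the appeal to the Betti numbers.
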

By the snake lemma, \eqref{eq:shortexact} induces a short 
exact sequence of the graded version. 
\begin{equation}
0
\longrightarrow
\grVG^k(\A')
\longrightarrow
\grVG^k(\A)
\stackrel{\rho_H}{\longrightarrow}
\grVG^{k-1}(\A^H)
\longrightarrow
0
\end{equation}
The following are also useful and frequently used. 
\begin{proposition}
\label{prop:vgrho}
Let $f\in\VG(A)$. 
Then: 
\begin{itemize}
\item 
$f\in\Fil^0\VG(\A)$ if and only if 
$\rho_H(f)=0$ for all $H\in\A$; 
\item 
$f\in\Fil^1\VG(\A)$ if and only if 
$\rho_H(f)$ is a constant function on $H$ for all $H\in\A$. 
\end{itemize}
\end{proposition}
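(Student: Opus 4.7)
The plan is to derive both items as immediate specializations of the characterization stated at the end of \Cref{prop:vg2}: for $f\in\VG(\A)$,
\[
f\in\Fil^k\VG(\A)\ \Longleftrightarrow\ \rho_H(f)\in\Fil^{k-1}\VG(\A^H)\text{ for every }H\in\A.
\]
No new ideas are really needed; the argument is bookkeeping with the cases $k=0$ and $k=1$, together with the explicit identifications $\Fil^{-1}\VG(\A^H)=\{0\}$ and $\Fil^{0}\VG(\A^H)=R$ recorded in the definition of the VG filtration.

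First I would take $k=0$. The characterization reads $f\in\Fil^{0}\VG(\A)$ if and only if $\rho_H(f)\in\Fil^{-1}\VG(\A^H)$ for every $H\in\A$. Since $\Fil^{-1}\VG(\A^H)=\{0\}$ by convention, this is exactly the assertion that $\rho_H(f)=0$ for all $H\in\A$, which is the first bullet. (As a sanity check, $\Fil^{0}\VG(\A)=R$ consists of the constant functions, and for a constant function $f$ one has $f(C^+)=f(C^-)$ for every lift pair, so $\rho_H(f)=0$ trivially; the content of the claim is the converse.)

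Next I would take $k=1$. The same characterization yields $f\in\Fil^{1}\VG(\A)$ if and only if $\rho_H(f)\in\Fil^{0}\VG(\A^H)$ for every $H\in\A$. Since $\Fil^{0}\VG(\A^H)=R$ consists precisely of the functions on $\ch(\A^H)$ that are constant (i.e.\ constant on $H$ in the language of the statement), this gives the second bullet.

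The only point that might merit an extra line in the write-up is the recollection that, in the notation of \Cref{prop:vg2}, the objects $\Fil^{-1}\VG(\A^H)$ and $\Fil^{0}\VG(\A^H)$ are respectively the zero module and the module of constants. I do not anticipate a genuine obstacle: once \Cref{prop:vg2} is available, both statements are corollaries obtained by specializing $k$.
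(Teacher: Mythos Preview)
Your proposal is correct and matches the paper's treatment: the paper does not give a separate proof of \Cref{prop:vgrho}, presenting it immediately after \Cref{prop:vg2} as a useful consequence, which is exactly your specialization to $k=0$ and $k=1$ using $\Fil^{-1}=\{0\}$ and $\Fil^{0}=R$.
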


\begin{remark}
The correspondence between chambers and elements in 
Orlik-Solomon algebras (Proposition \ref{prop:vg1}, (2)) 
has appeared several times in recent works. 
In \cite{yos-ch, bai-yos}, a basis of  of $H^\ell(M_2(\A), \Z)$ 
corresponding to chambers (a ``chamber basis'') 
was constructed via the canonical isomorphism 
$[C]\in H^{\operatorname{BM}}_\ell(M_2(\A), \Z)
\stackrel{\simeq}{\longrightarrow} H^\ell(M_2(\A), \Z)$. 
This basis is a dual to a homology basis constructed 
by Morse-theoretic method \cite{yos-lef}. More recently, 
Eur and Lam \cite{eur-lam} extended these ideas to 
oriented matroids. In their work, the compatibility between 
the map $\rho$ and the residue map in Orlik-Solomon algebras 
plays a crucial role. 
\end{remark}

\subsection{Motivation and background}
\label{subsec:mot}

The isomorphism class of an oriented matroid determines 
not only the VG algebra and the intersection lattice, 
but also the 
topological space $M_2(\A)$ itself. The Salvetti complex 
\cite{sal-top} describes 
the homotopy type of $M_2$ in terms of oriented matroids. 
More generally, de Concini and Salvetti 
\cite{dec-sal} proved that the oriented matroid determines 
the homotopy type of the $c$-plexified complement $M_c(\A)$. 
Furthermore, Bj\"orner and Ziegler \cite{sal-top, bz-st} 
proved that the oriented matroid recovers 
the homeomorphism type of the complexified complement $M_2(\A)$. 
The relationship between topology of $M_2(\A)$ and the 
combinatorial structure of $L(\A)$ is one of the central 
topics in the theory of hyperplane arrangements. 
Many (non/partial)implications are known among 
these topological and combinatorial objects. 
Some representative results are as follows: 
\begin{itemize}
\item[$(1)$] 
When $\ell=3$, the homeomorphism type of $M_2(\A)$ 
determines $L(\A)$ \cite{jy-int}. 
\item[$(2)$]  
There exist arrangements $\A_1$ and $\A_2$ such that 
$M_2(\A_1)$ and $M_2(\A_2)$ are homotopy equivalent, 
but $L(\A_1)$ and $L(\A_2)$ are not isomorphic 
\cite[Example 3.1]{fal-hom}. 
Homotopy equivalence 
between $M_2(\A_1)$ and $M_2(\A_2)$ implies that their 
cohomology rings are isomorphic. 
Hence this example also shows that there exist 
$\A_1, \A_2$ with $\OS^\bullet(\A_1)\simeq\OS^\bullet(\A_2)$ 
as graded algebras, while $L(\A_1)\not\simeq L(\A_2)$. 
\item[$(3)$]  
The example in $(2)$ occurs in dimension $\ell=3$. 
Together with $(1)$, this implies that 
$M_2(\A_1)$ and $M_2(\A_2)$ are 
homotopy equivalent, but not homeomorphic. 
\item[$(4)$]  
There exist arrangements $\A_1$ and $\A_2$ such that 
$L(\A_1)\simeq L(\A_2)$ as posets, but 
$M_2(\A_1)$ and $M_2(\A_2)$ are not 
homotopy equivalent. Such a pair was first constructed 
over $\C$ by Rybnikov in \cite{ryb}, and later 
over $\R$ \cite{gue-sos, bgv-fundam}. 
\item[$(5)$] 
There exist $\A_1$ and $\A_2$ such that 
$\OS^\bullet(\A_1)$ and $\OS^\bullet(\A_2)$ are isomorphic 
as graded $R$-module 
(equivalently, $\chi(\A_1, t)=\chi(\A_2, t)$), but not isomorphic 
as graded algebras. See \cite[Example 3.1]{fal-alg}, 
\cite[Example 4.9, 4.10]{fal-coh}. 
\end{itemize}
These results are summarized in Figure \ref{fig:implications}. 
As noted in 
\S \ref{sec:intro}, $\grVG^\bullet(\A)$ is isomorphic to 
the cohomology ring $H^\bullet(M_3(\A), R)$ of the $3$-plexified complement $M_3(\A)$, 
and when $\cha R=2$, it is also 
isomorphic to $\OS^\bullet(\A)$ (Proposition \ref{prop:vg1}). 
One of the purposes of this paper is to understand 
the extent to which filtered and graded VG algebras 
reflect oriented matroid structures when $\cha R \neq 2$, 
in contrast to the case $\cha R = 2$.

\section{From filtered VG algebras to tope graphs}
\label{sec:main}

\subsection{Recovering from the set of Heaviside functions}
\label{subsec:heaviside}

Let $\A_1, \A_2$ be central real hyperplane arrangements. 
For the moment, we do not assume that they have the same rank. 
However, we can easily recover both the rank and the number of 
hyperplanes from the filtered algebra $\VG(\A_i)$, as follows. 

\begin{proposition}
\label{prop:recrank}
Let $\A_1$ and $\A_2$ be arrangements. 
If $\VG(\A_1)\simeq \VG(\A_2)$ 
(or $\grVG^\bullet(\A_1)\simeq\grVG^\bullet(\A_2)$), 
then $\#\A_1=\#\A_2$ and the two arrangements have the same rank. 
\end{proposition}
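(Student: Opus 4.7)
My approach is to reduce everything to the graded module level and then identify $\#\A$ and $\rank \A$ as two different numerical invariants of $\grVG^\bullet(\A)$. Since an isomorphism of filtered $R$-algebras preserves the filtration by definition, it induces an isomorphism of associated graded $R$-algebras. Hence the filtered hypothesis implies the graded one, and it suffices to prove both conclusions under the assumption that $\grVG^\bullet(\A_1)\simeq\grVG^\bullet(\A_2)$ as graded $R$-algebras (in fact I will only need the underlying graded $R$-module structure).

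To recover $\#\A$, I will inspect the degree-one piece. By \Cref{prop:vg1}(2), $\grVG^1(\A)\simeq\OS^1(\A)$ as $R$-modules, so $\rank_R\grVG^1(\A)=b_1(\A)$. Since the arrangement consists of distinct hyperplanes, no one- or two-element subset of $[n]$ is dependent; consequently the Orlik--Solomon relations from \eqref{eq:OSrel} contribute nothing in degree one, and $\OS^1(\A)=\bigoplus_{i=1}^{n}Re_i$ is free of rank $n=\#\A$. A graded $R$-module isomorphism preserves the rank in each degree, so $\#\A_1=\rank_R\grVG^1(\A_1)=\rank_R\grVG^1(\A_2)=\#\A_2$.

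To recover the rank, I will use the top nonzero degree of $\grVG^\bullet(\A)$. Again by \Cref{prop:vg1}(2), this coincides with the top nonzero degree of $\OS^\bullet(\A)$. The standard NBC-basis description of the Orlik--Solomon algebra gives $\OS^k(\A)=0$ for $k>\rank\A$ (every $k$-subset is dependent, so its wedge is killed via the Orlik--Solomon relations) and $\OS^{\rank\A}(\A)\neq 0$ (any basis of the underlying matroid yields a nonzero NBC class). Therefore $\rank\A=\max\{k\mid\grVG^k(\A)\neq 0\}$, and this maximum is manifestly preserved by any graded $R$-module isomorphism, giving $\rank\A_1=\rank\A_2$.

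There is essentially no obstacle to this argument beyond assembling the pieces: the only substantive input is \Cref{prop:vg1}(2), together with the standard facts about $\OS^\bullet$ in degrees $1$ and $\rank\A$; the slight point worth emphasising is that a filtered isomorphism automatically descends to a graded isomorphism, so the two cases of the proposition are proved in a single stroke.
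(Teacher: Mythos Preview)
Your proposal is correct and follows essentially the same approach as the paper. The paper's proof is a terse two-line statement of the same two facts---$\rank\grVG^1(\A)=b_1=\#\A$, and $\rank\A$ equals the top nonzero degree of $\grVG^\bullet(\A)$---while you have filled in the supporting justification (via $\OS^1$ and the NBC basis) and made explicit the reduction from the filtered hypothesis to the graded one.
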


\begin{proof}
By Proposition \ref{prop:vg1}, 
$\rank\grVG^1(\A_i)=b_1=\#\A_i$. 
The rank of $\A_i$ is equal to the largest integer $k>0$ 
such that $\grVG^k(\A_i)\neq 0$. 
\end{proof}

To recover the oriented matroid, it is enough (by 
Proposition \ref{prop:graphisom}) to recover the 
tope graph $\Tope(\A)$. The vertex set of the tope 
graph is identified with $\PrimIdem(\VG(\A))$. 
Our next task is to recover the adjacency relations. 

\begin{definition}
Let $\A=\{H_1, \dots, H_n\}$ be a real arrangement. 
Denote the set of all Heaviside functions by 
\begin{equation}
\Heav(\A)=\{x_i^\pm\mid i=1, \dots, n\}. 
\end{equation}
We call the hyperplane $H_i$ the \emph{support} of the 
Heaviside function $x_i^\pm$. 
(See Definition \ref{def:support} for the general notion of support.) 
\end{definition}
Note that $x, x'\in\Heav$ have the same support 
if and only if either $x=x'$ or $x+x'=1$. 
The following observation is straightforward. 

\begin{proposition}
\label{prop:rectopeg}
Let $\A$ and $\Heav(\A)$ be as above. 
For two chambers $C, C'\in\ch(\A)$, we have 
\begin{equation}
\#\left\{x\in\Heav(\A)\mid x\cdot 1_C \neq x\cdot 1_{C'} \text{ and }
0\in\{x\cdot 1_C, x\cdot 1_{C'}\}\right\}=
2\cdot\#\Sep(C, C'). 
\end{equation}
\end{proposition}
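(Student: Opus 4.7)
The proof plan is a direct case analysis, relying on two structural observations. First, $1_C$ is a primitive idempotent in $\VG(\A) \cong R^{\ch(\A)}$, so for any $x \in \VG(\A)$ one has $x \cdot 1_C = x(C) \cdot 1_C$. In particular, for a Heaviside function $x = x_i^{\eps}$ with $\eps \in \{+, -\}$, the product $x \cdot 1_C$ equals $1_C$ or $0$ according to whether $C \subset H_i^\eps$ or not. Second, when $C \neq C'$ the primitive idempotents $1_C$ and $1_{C'}$ are $R$-linearly independent, so whether $x \cdot 1_C$ and $x \cdot 1_{C'}$ agree is controlled by the pair of scalars $(x(C), x(C')) \in \{0,1\}^2$.

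With these observations in hand, I would fix a hyperplane $H_i \in \A$ and examine the contribution of its two associated Heaviside functions $x_i^+$ and $x_i^-$ by tabulating the four combinations of $(\sgn \alpha_i(C), \sgn \alpha_i(C')) \in \{+, -\}^2$. The argument then splits according to whether $H_i$ separates $C$ and $C'$: in the separating case, the values $x_i^{\eps}(C)$ and $x_i^{\eps}(C')$ are swapped between $0$ and $1$ for each sign $\eps$, and one verifies that both $x_i^+$ and $x_i^-$ contribute to the count; the non-separating case is handled analogously. Summing the contributions from all hyperplanes $H_i \in \A$ yields the stated equality.

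The proof is entirely routine; the only conceptual point is the reduction via $x \cdot 1_C = x(C) \cdot 1_C$, which converts the algebraic comparison in $\VG(\A)$ into a combinatorial statement about sign vectors of $C$ and $C'$. There is no substantial obstacle in this proposition — it is a preparatory lemma whose role is to show that, once the distinguished set $\Heav(\A)$ and the primitive idempotents $1_C$ have been identified inside $\VG(\A)$, the separating sets $\Sep(C, C')$ (and hence the tope-graph adjacency structure $\Tope(\A)$) can be read off purely algebraically.
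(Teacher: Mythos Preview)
Your approach is the same as the paper's: reduce to the claim that $H_i \in \Sep(C,C')$ if and only if $x_i^{\pm}\cdot 1_C \neq x_i^{\pm}\cdot 1_{C'}$, then count two Heaviside functions per separating hyperplane. However, your treatment of the non-separating case (``handled analogously'') conceals a genuine wrinkle. You correctly observe $x\cdot 1_C = x(C)\cdot 1_C$ and that $1_C,1_{C'}$ are linearly independent when $C\neq C'$; but then $x(C)\cdot 1_C = x(C')\cdot 1_{C'}$ \emph{as elements of} $\VG(\A)$ forces $x(C)=x(C')=0$, not merely $x(C)=x(C')$. So if $C\neq C'$ both lie in $H_i^{+}$, one has $x_i^{+}\cdot 1_C = 1_C \neq 1_{C'} = x_i^{+}\cdot 1_{C'}$, and $x_i^{+}$ contributes to the count even though $H_i$ is not separating. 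Carrying this through, the literal count for $C\neq C'$ is $n+\#\Sep(C,C')$, not $2\,\#\Sep(C,C')$.

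The paper's one-line proof makes the same elision; the intended reading (and the one consistent with the later product table and generalized-tope-graph construction) is evidently to compare the \emph{scalars} $x(C)$ and $x(C')$ rather than the ring elements $x\cdot 1_C$ and $x\cdot 1_{C'}$. Under that reading your tabulation is correct and identical to the paper's. Either formula suffices for the downstream application, since both $n+\#\Sep$ and $2\,\#\Sep$ are strictly monotone in $\#\Sep$ and hence recover adjacency; but the case you dismiss as analogous is precisely where the two readings diverge, and it deserves an explicit sentence rather than a wave of the hand.
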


\begin{proof}
By definition of Heaviside functions, 
chambers $C$ and $C'$ are separated by $H_i$ 
if and only if 
one of $x_i^\pm\cdot 1_C$ and $x_i^\pm\cdot 1_{C'}$ 
is zero, and the other one is nonzero. 
The result follows immediately. 
\end{proof}
By Proposition \ref{prop:rectopeg}, $C$ and $C'$ are adjacent if and only if 
$\#\{x\in\Heav(\A)\mid x\cdot 1_C\neq x\cdot 1_{C'}, 
\text{ and }
0\in\{x\cdot 1_C, x\cdot 1_{C'}\}\}=2$. 
Therefore, once 
the set of Heaviside functions $\Heav(\A)\subset\VG(\A)$ 
is known, the adjacency relation on the vertex set 
$\PrimIdem(\VG(\A))$, hence the tope graph $\Tope(\A)$, 
can be recovered algebraically. 

The main difficulty of the reconstruction problem is that, 
a priori, the set of Heaviside functions $\Heav(\A)$ is not known. 
From the definitions, we have 
$\Heav(\A)\subseteq\Fil^1\VG(\A)$, 
$\Heav(\A)\subseteq\Idem(\VG(\A))$ and 
$1\notin\Heav(\A)$. 
Hence, 
\begin{equation}
\label{eq:heavF1idem}
\Heav(\A)\subseteq\Fil^1\cap\Idem(\VG(\A))\setminus\{0, 1\}. 
\end{equation}
Since the right-hand-side of \eqref{eq:heavF1idem} is 
described purely in terms of the filtered algebra $\VG(\A)$, 
if equality holds in \eqref{eq:heavF1idem} then the 
tope graph can be recovered from $\VG(\A)$. 
In general, however, the inclusion is strict, 
as the following example shows. 

\begin{example}
\label{ex:alternating}
Let $\A=\{H_1, H_2, H_3\}$ be lines in $\R^2$ as in 
Figure \ref{fig:F1idem} (upper left). 
Define 
$y^\pm\in\VG(\A)$ 
as in Figure \ref{fig:F1idem} (upper right). 
Then, 
\begin{equation}
y^+=x_1^++x_2^++x_3^+-1. 
\end{equation}
The left-hand side ($y^+$) is clearly contained in $\Idem(\VG(\A))$. 
The right-hand side lies in $\Fil^1\VG(\A)$. 
(We can also verify this fact by checking $\rho_{H_i}(y^\pm)$ is a 
constant function for any $i=1, 2, 3$ (Proposition \ref{prop:vg2}).) 
Thus $y\in\Fil^1\cap\Idem(\VG(\A))\setminus\{0, 1\}$, 
yet $y$ is not a Heaviside function. 
Hence $\Heav(\A)\subsetneqq 
\Fil^1\cap\Idem(\VG(\A))\setminus\{0, 1\}$. 
(We will see in Theorem \ref{thm:genHeav} that 
$\Fil^1\cap\Idem(\VG(\A))\setminus\{0, 1\}=
\{x_1^\pm, x_2^\pm, x_3^\pm, y^\pm\}$ .) 

In this situation, we can not algebraically distinguish 
$x_1^\pm, x_2^\pm, x_3^\pm$ from $y^\pm$. 
Indeed, the map 
$x_1^+\mapsto y^-, x_2^+\mapsto x_2^+, x_3^+\mapsto x_3^+$ 
defines an automorphism of the filtered algebra $\VG(\A)$. 
In this example, the automorphism group 
$\Aut_{\operatorname{\mathsf{filt}}}(\VG(\A))$ 
(see \S \ref{subsec:autom}), 
of filtered algebra is isomorphic to the wreath product 
$\{\pm 1\}\wr\mathfrak{S}_3\simeq
\{\pm 1\}^3\rtimes\mathfrak{S}_3$, 
a group of order $48$, which 
is strictly larger than the automorphism group of the tope graph 
(a group of order $12$). 
\begin{figure}[htbp]
\centering
\begin{tikzpicture}

\coordinate (A0) at (0,4); 
\coordinate (A7) at (2.7,4); 
\coordinate (A6) at (6,4); 
\coordinate (A8) at (9.5,4); 
\coordinate (A1) at (0,1); 
\coordinate (A2) at (2.5,1); 
\coordinate (A3) at (5,1); 
\coordinate (A4) at (7.5,1); 
\coordinate (A5) at (10,1); 

\draw [very thick] (A0) -- ++(1,0);
\draw [very thick] (A0) -- ++(0.5,1);
\draw [very thick] (A0) -- ++(-0.5,1);
\draw [very thick] (A0) -- ++(-1,0);
\draw [very thick] (A0) -- ++(-0.5,-1);
\draw [very thick] (A0) -- ++(0.5,-1);
\draw[->] ($(A0)+(0.9,0)$) node[right] {$H_1$} --++(0,0.3); 
\draw[->] ($(A0)+(-0.9,0)$)  --++(0,0.3); 
\draw[->] ($(A0)+(0.45,0.9)$) node[above] {$H_2$} --++(0.26,-0.13); 
\draw[->] ($(A0)+(-0.45,-0.9)$) --++(0.26,-0.13); 
\draw[->] ($(A0)+(-0.45,0.9)$) node[above] {$H_3$} --++(-0.26,-0.13); 
\draw[->] ($(A0)+(0.45,-0.9)$) --++(-0.26,-0.13); 

\draw (A7) node {$\Tope(\A)$}; 
\filldraw[draw=black, fill=black] (A7)+(0.75,0.45) circle [radius=0.1]; 
\filldraw[draw=black, fill=black] (A7)+(0,0.9) circle [radius=0.1]; 
\filldraw[draw=black, fill=black] (A7)+(-0.75,0.45) circle [radius=0.1]; 
\filldraw[draw=black, fill=black] (A7)+(-0.75,-0.45) circle [radius=0.1]; 
\filldraw[draw=black, fill=black] (A7)+(0,-0.9) circle [radius=0.1]; 
\filldraw[draw=black, fill=black] (A7)+(0.75,-0.45) circle [radius=0.1]; 

\draw[very thick] ($(A7)+(0.75,0.45)$)--($(A7)+(0,0.9)$)--($(A7)+(-0.75,0.45)$)--($(A7)+(-0.75,-0.45)$)--($(A7)+(-0.75,-0.45)$)--($(A7)+(0,-0.9)$)--($(A7)+(0.75,-0.45)$)--cycle;


\draw [very thick] (A6) -- ++(1,0);
\draw [very thick] (A6) -- ++(0.5,1);
\draw [very thick] (A6) -- ++(-0.5,1);
\draw [very thick] (A6) -- ++(-1,0) node[left] {$y^+:=$};
\draw [very thick] (A6) -- ++(-0.5,-1);
\draw [very thick] (A6) -- ++(0.5,-1);
\draw [very thick] (A6)+(0.5,0.3) node {$1$};
\draw [very thick] (A6)+(0,0.6) node {$0$};
\draw [very thick] (A6)+(-0.5,0.3) node {$1$};
\draw [very thick] (A6)+(-0.5,-0.3) node {$0$};
\draw [very thick] (A6)+(0,-0.6) node {$1$};
\draw [very thick] (A6)+(0.5,-0.3) node {$0$};

\draw [very thick] (A8) -- ++(1,0);
\draw [very thick] (A8) -- ++(0.5,1);
\draw [very thick] (A8) -- ++(-0.5,1);
\draw [very thick] (A8) -- ++(-1,0) node[left] {$y^-:=$};
\draw [very thick] (A8) -- ++(-0.5,-1);
\draw [very thick] (A8) -- ++(0.5,-1);
\draw [very thick] (A8)+(0.5,0.3) node {$0$};
\draw [very thick] (A8)+(0,0.6) node {$1$};
\draw [very thick] (A8)+(-0.5,0.3) node {$0$};
\draw [very thick] (A8)+(-0.5,-0.3) node {$1$};
\draw [very thick] (A8)+(0,-0.6) node {$0$};
\draw [very thick] (A8)+(0.5,-0.3) node {$1$};

\draw [very thick] (A1) -- ++(1,0);
\draw [very thick] (A1) -- ++(0.5,1);
\draw [very thick] (A1) -- ++(-0.5,1);
\draw [very thick] (A1) -- ++(-1,0);
\draw [very thick] (A1) -- ++(-0.5,-1);
\draw [very thick] (A1) -- ++(0.5,-1);
\draw [very thick] (A1)+(0.5,0.3) node {$1$};
\draw [very thick] (A1)+(0,0.6) node {$0$};
\draw [very thick] (A1)+(-0.5,0.3) node {$1$};
\draw [very thick] (A1)+(-0.5,-0.3) node {$0$};
\draw [very thick] (A1)+(0,-0.6) node {$1$};
\draw [very thick] (A1)+(0.5,-0.3) node {$0$};

\draw (A1)+(1.25,0) node {$=$}; 

\draw [very thick] (A2) -- ++(1,0);
\draw [very thick] (A2) -- ++(0.5,1);
\draw [very thick] (A2) -- ++(-0.5,1);
\draw [very thick] (A2) -- ++(-1,0);
\draw [very thick] (A2) -- ++(-0.5,-1);
\draw [very thick] (A2) -- ++(0.5,-1);
\draw [very thick] (A2)+(0.5,0.3) node {$1$};
\draw [very thick] (A2)+(0,0.6) node {$1$};
\draw [very thick] (A2)+(-0.5,0.3) node {$1$};
\draw [very thick] (A2)+(-0.5,-0.3) node {$0$};
\draw [very thick] (A2)+(0,-0.6) node {$0$};
\draw [very thick] (A2)+(0.5,-0.3) node {$0$};

\draw (A2)+(1.25,0) node {$+$}; 

\draw [very thick] (A3) -- ++(1,0);
\draw [very thick] (A3) -- ++(0.5,1);
\draw [very thick] (A3) -- ++(-0.5,1);
\draw [very thick] (A3) -- ++(-1,0);
\draw [very thick] (A3) -- ++(-0.5,-1);
\draw [very thick] (A3) -- ++(0.5,-1);
\draw [very thick] (A3)+(0.5,0.3) node {$1$};
\draw [very thick] (A3)+(0,0.6) node {$0$};
\draw [very thick] (A3)+(-0.5,0.3) node {$0$};
\draw [very thick] (A3)+(-0.5,-0.3) node {$0$};
\draw [very thick] (A3)+(0,-0.6) node {$1$};
\draw [very thick] (A3)+(0.5,-0.3) node {$1$};

\draw (A3)+(1.25,0) node {$+$}; 

\draw [very thick] (A4) -- ++(1,0);
\draw [very thick] (A4) -- ++(0.5,1);
\draw [very thick] (A4) -- ++(-0.5,1);
\draw [very thick] (A4) -- ++(-1,0);
\draw [very thick] (A4) -- ++(-0.5,-1);
\draw [very thick] (A4) -- ++(0.5,-1);
\draw [very thick] (A4)+(0.5,0.3) node {$0$};
\draw [very thick] (A4)+(0,0.6) node {$0$};
\draw [very thick] (A4)+(-0.5,0.3) node {$1$};
\draw [very thick] (A4)+(-0.5,-0.3) node {$1$};
\draw [very thick] (A4)+(0,-0.6) node {$1$};
\draw [very thick] (A4)+(0.5,-0.3) node {$0$};

\draw (A4)+(1.25,0) node {$-$}; 

\draw [very thick] (A5) -- ++(1,0);
\draw [very thick] (A5) -- ++(0.5,1);
\draw [very thick] (A5) -- ++(-0.5,1);
\draw [very thick] (A5) -- ++(-1,0);
\draw [very thick] (A5) -- ++(-0.5,-1);
\draw [very thick] (A5) -- ++(0.5,-1);
\draw [very thick] (A5)+(0.5,0.3) node {$1$};
\draw [very thick] (A5)+(0,0.6) node {$1$};
\draw [very thick] (A5)+(-0.5,0.3) node {$1$};
\draw [very thick] (A5)+(-0.5,-0.3) node {$1$};
\draw [very thick] (A5)+(0,-0.6) node {$1$};
\draw [very thick] (A5)+(0.5,-0.3) node {$1$};

\end{tikzpicture}
\caption{An element $y\in\Fil^1\cap\Idem(\VG(\A))$ that is 
not a Heaviside function. (Arrows indicate positive sides).}
\label{fig:F1idem}
\end{figure}
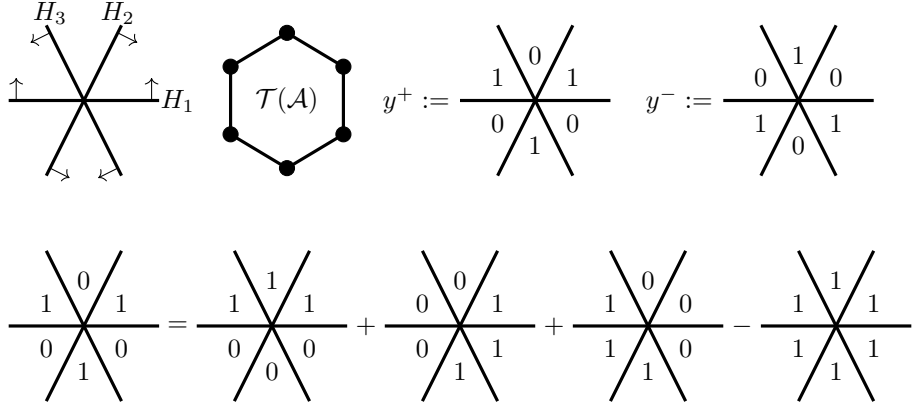
\end{example}

\subsection{Arrangements generic in codimension $2$}
\label{subsec:gencodim2}

\begin{definition}
\label{def:gencodim2}
An arrangement $\A$ is called \emph{generic in codimension $2$} if 
$\#\A_X=2$ for all $X\in L(\A)$ with $\codim X=2$. 
\end{definition}

Our first main result shows that 
the tope graph $\Tope(\A)$ can be recovered from 
the filtered algebra $\VG(\A)$ when 
$\A$ is generic in codimension $2$. 
\begin{theorem}
\label{thm:gencodim2}
Assume $\cha R\neq 2$. 
Let $\A_1, \A_2$ be real arrangements and 
suppose that $\A_1$ is generic in codimension $2$. 
If $\VG(\A_1)_R\simeq \VG(\A_2)_R$ as filtered algebras, 
then the tope graphs 
$\Tope(\A_1)$ and $\Tope(\A_2)$ are isomorphic. 
\end{theorem}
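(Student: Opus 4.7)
The plan is to reconstruct $\Tope(\A_1)$ algebraically from $\VG(\A_1)_R$ and then transport the reconstruction across a filtered isomorphism $\phi\colon\VG(\A_1)_R\to\VG(\A_2)_R$. The core step is a lemma asserting that under our hypotheses the inclusion \eqref{eq:heavF1idem} is an equality,
\begin{equation*}
\Heav(\A_1)=\Fil^1\cap\Idem(\VG(\A_1))\setminus\{0,1\}.
\end{equation*}
Granting this, the right-hand side is defined purely in filtered-$R$-algebra terms, so $\phi$ carries it bijectively onto $\Fil^1\cap\Idem(\VG(\A_2))\setminus\{0,1\}\supseteq\Heav(\A_2)$. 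Since $\#\A_1=\#\A_2$ by \Cref{prop:recrank}, a cardinality count forces $\phi(\Heav(\A_1))=\Heav(\A_2)$. Because $\phi$ also restricts to a bijection $\PrimIdem(\VG(\A_1))\to\PrimIdem(\VG(\A_2))$, i.e., $\ch(\A_1)\to\ch(\A_2)$, and the adjacency characterization in \Cref{prop:rectopeg} uses only products in $\VG$, the induced bijection is a graph isomorphism $\Tope(\A_1)\simeq\Tope(\A_2)$.

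To prove the key lemma, I would take $y\in\Fil^1\cap\Idem(\VG(\A_1))\setminus\{0,1\}$. Since $R$ is an integral domain and $y^2=y$, the function $y$ takes values in $\{0,1\}\subset R$. By \Cref{prop:vgrho}, for each $H\in\A_1$ there is a constant $\eps_H\in R$ with $\rho_H(y)\equiv\eps_H$; moreover $\rho_H(y)(C)=y(C^+)-y(C^-)\in\{-1,0,1\}\subset R$, and these three elements are pairwise distinct in $R$ because $\cha R\neq 2$, so $\eps_H\in\{-1,0,1\}$. The codim-$2$ genericity now intervenes: for any two hyperplanes $H_i,H_j\in\A_1$, the flat $X=H_i\cap H_j$ is crossed only by $H_i$ and $H_j$, hence exactly four chambers $C_{\pm\pm}$ (with sign pattern relative to $(H_i,H_j)$) are adjacent to $X$. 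Setting $a=y(C_{--})$ and applying the constant-jump conditions across $H_i$ and $H_j$, the four values $a,\;a+\eps_i,\;a+\eps_j,\;a+\eps_i+\eps_j$ all lie in $\{0,1\}$, which by a short case analysis (using $\cha R\neq 2$ to rule out $2=0$ and $-1=1$ in $R$) forces $\eps_i\eps_j=0$.

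Therefore $\A_0:=\{H\in\A_1\mid\eps_H\neq 0\}$ has at most one element. If $\A_0=\emptyset$, then \Cref{prop:vgrho} places $y$ in $\Fil^0\VG(\A_1)=R$, contradicting $y\notin\{0,1\}$. Hence $\A_0=\{H_i\}$ for a unique $H_i$, and $y$ is constant with values in $\{0,1\}$ on each side of $H_i$, with values differing by $\pm 1$ across $H_i$; this forces $y=x_i^+$ or $y=x_i^-$, completing the lemma.

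The main obstacle is the local case analysis at generic codim-$2$ flats together with the essential interplay of the two hypotheses. As \Cref{ex:alternating} illustrates, without codim-$2$ genericity there exist spurious idempotents in $\Fil^1\cap\Idem$ (the alternating element at a triple point), so the geometric restriction to four chambers around each flat is indispensable; and without $\cha R\neq 2$, the configurations $\eps_i=\eps_j=1$ would cease to be obstructed in $R$, so the algebraic rigidity that pins down $y$ as a Heaviside function would collapse.
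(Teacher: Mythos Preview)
Your proposal is correct and follows essentially the same route as the paper: you prove the key lemma that $\Heav(\A_1)=\Fil^1\cap\Idem(\VG(\A_1))\setminus\{0,1\}$ by the local analysis at a codimension-$2$ flat (four chambers, idempotent values in $\{0,1\}$, constancy of $\rho_H$, and $\cha R\neq 2$), and then deduce the theorem by a cardinality count using \Cref{prop:recrank} together with the adjacency criterion of \Cref{prop:rectopeg}. Your formulation ``$\eps_i\eps_j=0$ for all $i,j$'' is a clean repackaging of the paper's contradiction argument (which instead assumes $\#\Supp(y)\geq 2$ and picks $H_1,H_2\in\Supp(y)$), but the content is the same.
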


Before proving Theorem \ref{thm:gencodim2}, 
we introduce the notion of \emph{support} for elements 
of $\Fil^1\VG(\A)$. Recall that $y\in\Fil^1\VG(\A)$ 
if and only if $\rho_{H_i}(y)$ is a constant function on $H_i$ 
for every $i=1, \dots, n$ 
(Proposition \ref{prop:vg2}). 
\begin{definition}
\label{def:support}
For $y\in\Fil^1\VG(\A)$, define the set $\Supp(y)\subset\A$ by 
\begin{equation}
\Supp(y)=\{H\in\A\mid\rho_H(y)\neq 0\}. 
\end{equation}
\end{definition}

For $y\in\Fil^1$ the following are immediate: 
\begin{itemize}
\item 
$y\in\Fil^0\VG(\A)$ if and only if $\Supp(y)=\emptyset$. 
\item 
$y\in\Heav(\A)$ if and only if $\#\Supp(y)=1$.
\item 
For the element $y^\pm$ in Example \ref{ex:alternating}, we have 
$\Supp(y^\pm)=\A$. 
\end{itemize}

Now we come to the essential step in the proof 
of Theorem \ref{thm:gencodim2}. 

\begin{lemma}
\label{lem:equal}
Assume $\cha R\neq 2$. 
Let $\A$ be an arrangement generic in codimension $2$. 
Then $\Heav(\A)=\Fil^1\cap\Idem(\VG(\A))\setminus\{0, 1\}$. 
\end{lemma}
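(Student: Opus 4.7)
The inclusion $\Heav(\A)\subseteq\Fil^1\cap\Idem(\VG(\A))\setminus\{0,1\}$ is built into the definitions, so the content is the reverse inclusion. Given $y\in\Fil^1\cap\Idem(\VG(\A))\setminus\{0,1\}$, I would first observe that $y(C)^2=y(C)$ forces $y(C)\in\{0,1\}$ for every chamber $C$, since $R$ is an integral domain. By \Cref{prop:vgrho}, for each $H\in\A$ the function $\rho_H(y)$ is a constant $c_H\in R$; since $c_H=y(D^+)-y(D^-)$ for any $D\in\ch(\A^H)$ and the two values lie in $\{0,1\}$, we have $c_H\in\{-1,0,1\}\subset R$, and $H\in\Supp(y)$ iff $c_H\neq 0$.

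The crux, and the step I expect to be the main obstacle, is the claim $|\Supp(y)|\leq 1$. Suppose for contradiction that $H_1,H_2\in\Supp(y)$ are distinct, and set $X=H_1\cap H_2$. The codimension-$2$ genericity hypothesis says no other hyperplane contains $X$, so near a generic point of $X$ the arrangement $\A$ looks locally like $\{H_1,H_2\}$, yielding four pairwise distinct chambers $C_{\eps_1\eps_2}$ of $\A$ indexed by $(\eps_1,\eps_2)\in\{+,-\}^2$ (distinct because their sign vectors differ in two coordinates). Since the two chambers of $\A^{H_i}$ near $X$ lift to these four chambers, the constancy of $\rho_{H_i}(y)$ becomes the system
\begin{equation*}
y(C_{+\eps_2})-y(C_{-\eps_2})=c_{H_1},\qquad y(C_{\eps_1+})-y(C_{\eps_1-})=c_{H_2}\qquad(\eps_1,\eps_2\in\{+,-\}).
\end{equation*}
With $c_{H_1},c_{H_2}\in\{\pm 1\}$ and all four $y$-values in $\{0,1\}$, a short case analysis forces one of the four values to equal $\pm 2$; since $\cha R\neq 2$ this is a contradiction. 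Both hypotheses are essential exactly here: \Cref{ex:alternating} shows a non-Heaviside idempotent of this form when the genericity assumption is dropped, and in characteristic $2$ the value $2=0$ renders the system consistent.

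Once $|\Supp(y)|\leq 1$ is established the argument concludes quickly. If $\Supp(y)=\emptyset$, then $\rho_H(y)=0$ for every $H$ gives $y\in\Fil^0=R$ by \Cref{prop:vgrho}, and a $\{0,1\}$-valued constant is $0$ or $1$, which is excluded. Hence $\Supp(y)=\{H\}$ with $c_H\in\{\pm 1\}$; assume $c_H=1$, as the other sign is symmetric. A direct check gives $\rho_{H'}(x_H^+)=0$ for $H'\neq H$ (the two lifts of any $D\in\ch(\A^{H'})$ share a facet on $H'$ and hence lie on the same side of $H$) and $\rho_H(x_H^+)=1$, so $\rho_{H'}(y-x_H^+)=0$ for all $H'\in\A$. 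Applying \Cref{prop:vgrho} again, $y-x_H^+\in\Fil^0=R$ is a constant; evaluating on any chamber adjacent to $H$ on its positive side, where both $y$ and $x_H^+$ equal $1$, pins this constant at $0$. Therefore $y=x_H^+\in\Heav(\A)$.
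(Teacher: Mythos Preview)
Your proof is correct and follows essentially the same route as the paper: both localize at a generic point of $H_1\cap H_2$ for two distinct hyperplanes in $\Supp(y)$, use the four surrounding chambers, and extract a contradiction from the constancy of $\rho_{H_i}(y)$ together with $y(C)\in\{0,1\}$. One minor quibble: the case analysis actually forces a value in $\{2,-1\}$ rather than always $\pm 2$, but either lies outside $\{0,1\}$ when $\cha R\neq 2$, so the contradiction stands (the paper phrases the same contradiction as $\rho_{H_2}(y)=0$); your final paragraph also spells out explicitly the step ``$\#\Supp(y)=1\Rightarrow y\in\Heav(\A)$'' that the paper takes for granted from the remarks preceding the lemma.
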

\begin{remark}
We will prove a more general result in Theorem \ref{thm:genHeav} 
without assuming ``generic in codimension $2$'' from which 
Lemma \ref{lem:equal} is immediate. 
\end{remark}
\begin{proof}[Proof of Lemma \ref{lem:equal}]
Let $y\in\Fil^1\cap\Idem(\VG(\A))\setminus\{0, 1\}$. 
Since $y\neq 0, 1$, we have $\#\Supp(y)>0$. 
Assume $\#\Supp(y)\neq 1$, i.e., $\#\Supp(y)\geq 2$, and 
choose distinct $H_1, H_2\in\Supp(y)$. 
Since $\A$ is generic in codimension $2$, 
the only hyperplanes containing a generic point 
$p\in H_1\cap H_2$ are $H_1$ and $H_2$, 
so a neighborhood of $p$ is divided into four chambers 
$C_1, \dots, C_4$ as in Figure \ref{fig:aroundp}. 
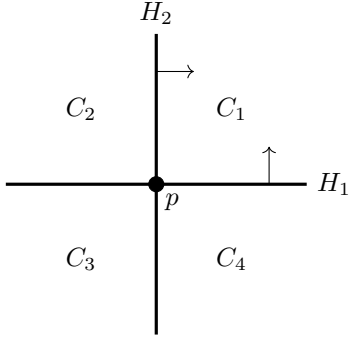
\begin{figure}[htbp]
\centering
\begin{tikzpicture}

\draw[very thick] (-2,0)--(2,0) node[right] {$H_1$}; 
\draw[very thick] (0,-2)--(0,2) node[above] {$H_2$}; 
\filldraw[draw=black, fill=black] (0,0) node[anchor=north west] {$p$} circle [radius=0.1]; 

\draw (1,1) node {$C_1$}; 
\draw (-1,1) node {$C_2$}; 
\draw (-1,-1) node {$C_3$}; 
\draw (1,-1) node {$C_4$}; 

\draw[->] (1.5,0)--++(0, 0.5);
\draw[->] (0,1.5)--++(0.5,0);

\end{tikzpicture}
\caption{For chambers $C_1, \dots, C_4$ around a generic point 
$p\in H_1\cap H_2$.}
\label{fig:aroundp}
\end{figure}
Because $y$ is idempotent, each $y(C_i)$ is either $0$ or $1$. 
Since $\rho_{H_1}(y)$ is constant, we have 
\begin{equation}
\rho_{H_1}(y)=y(C_1)-y(C_4)=y(C_2)-y(C_3)\neq 0. 
\end{equation}
Since $\cha R\neq 2$, this forces either\footnote{If 
$\cha R=2$, there are two more possibilities.} 
$y(C_1)=y(C_2)=1, y(C_3)=y(C_4)=0$ or 
$y(C_1)=y(C_2)=0, y(C_3)=y(C_4)=1$. 
In either case, 
$\rho_{H_2}(y)=0$, contradicting $H_2\in\Supp(y)$. 
Hence, $\#\Supp(y)=1$ and $y$ is a Heaviside function. 
\end{proof}
\begin{proof}[Proof of Theorem \ref{thm:gencodim2}]
By Lemma \ref{lem:equal}, 
\begin{equation}
\begin{aligned}
\#\Heav(\A_1)
&=\#\Fil^1\cap\Idem(\VG(\A_1))\setminus\{0, 1\}\\
&=\#\Fil^1\cap\Idem(\VG(\A_2))\setminus\{0, 1\}\\
&\geq\#\Heav(\A_2). 
\end{aligned}
\end{equation}
By Proposition \ref{prop:recrank}, $\#\Heav(\A_1)=\#\Heav(\A_2)$, 
so 
$\Fil^1\cap\Idem(\VG(\A_2))\setminus\{0, 1\}=\Heav(\A_2)$. 
Therefore the adjacency relations in the tope graphs 
(given by Proposition \ref{prop:rectopeg}) agree, so 
$\Tope(\A_1)\simeq\Tope(\A_2)$. 
\end{proof}

\subsection{Automorphism groups}
\label{subsec:autom}
We first consider the group of algebra automorphisms 
$\Aut_{\operatorname{\mathsf{alg}}}(\VG(\A))$ (not 
necessarily preserving the filtration). 
Any algebra isomorphism $f:\VG(\A)\to\VG(\A)$ 
preserves $\PrimIdem(\A)\simeq\ch(\A)$, hence 
induces a permutation of chambers 
$\overline{f}:\ch(\A)\stackrel{\simeq}{\to}\ch(\A)$. 
Conversely, any permutation 
$\overline{f}:\ch(\A)\stackrel{\simeq}{\to}\ch(\A)$ 
induces an algebra isomorphism 
$f:\VG(\A)\stackrel{\simeq}{\to}\VG(\A)$. 
Thus 
\begin{equation}
\Aut_{\operatorname{\mathsf{alg}}}(\VG(\A))\simeq
\Aut_{\operatorname{\mathsf{set}}}(\ch(\A)), 
\end{equation}
which is a group of order $\#\ch(\A) !$. 

We next compare the automorphism group of the filtered algebra 
$\Aut_{\operatorname{\mathsf{filt}}}(\VG(\A))$
with graph automorphism group 
$\Aut_{\operatorname{\mathsf{graph}}}(\Tope(\A))$ and 
the permutation group of chambers 
$\Aut_{\operatorname{\mathsf{set}}}(\ch(\A))$. 

\begin{theorem}
Assume $\cha R\neq 2$. 
Let $\A$ be a real arrangement in $\R^\ell$. 
\begin{itemize}
\item[$(1)$] 
There are natural inclusions 
\begin{equation}
\Aut_{\operatorname{\mathsf{graph}}}(\Tope(\A))
\subseteq 
\Aut_{\operatorname{\mathsf{filt}}}(\VG(\A))\subseteq 
\Aut_{\operatorname{\mathsf{set}}}(\ch(\A)). 
\end{equation}
In particular, $\Aut_{\operatorname{\mathsf{filt}}}(\VG(\A))$ 
is finite. 
\item[$(2)$] 
If $\A$ is generic in codimension $2$, then 
\begin{equation}
\Aut_{\operatorname{\mathsf{graph}}}(\Tope(\A))
=
\Aut_{\operatorname{\mathsf{filt}}}(\VG(\A)). 
\end{equation}
\end{itemize}
\end{theorem}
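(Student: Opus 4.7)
The plan is to prove the two inclusions in (1) separately, then deduce (2) directly from Lemma~\ref{lem:equal}; apart from that lemma, everything is essentially formal.

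For the second inclusion in (1), a filtered automorphism is a fortiori an algebra automorphism, and the action on primitive idempotents recalled at the start of \S\ref{subsec:autom} identifies $\Aut_{\operatorname{\mathsf{alg}}}(\VG(\A))$ with $\Aut_{\operatorname{\mathsf{set}}}(\ch(\A))$. This yields the inclusion $\Aut_{\operatorname{\mathsf{filt}}}(\VG(\A))\subseteq\Aut_{\operatorname{\mathsf{set}}}(\ch(\A))$ and, since $\#\ch(\A)<\infty$, the finiteness assertion.

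For the first inclusion, given $\phi\in\Aut_{\operatorname{\mathsf{graph}}}(\Tope(\A))$ I would set $f_\phi(1_C)=1_{\phi(C)}$ and extend to an algebra automorphism of $\VG(\A)$; the task is to show $f_\phi$ preserves the VG filtration. Since $\Fil^k\VG(\A)$ is the $R$-span of monomials of degree at most $k$ in $\Heav(\A)$, it suffices to show $f_\phi(\Heav(\A))\subseteq\Heav(\A)$. By Proposition~\ref{prop:graphisom}, the given graph automorphism $\phi$ is realised by an isomorphism of the oriented matroid of $\A$ with itself, that is, by a permutation $\pi\in\mathfrak{S}_n$ together with signs $\eps_1,\dots,\eps_n\in\{\pm\}$ such that $C\subset H_i^+$ iff $\phi(C)\subset H_{\pi(i)}^{\eps_i}$. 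From $f_\phi(x_i^+)(C)=x_i^+(\phi^{-1}(C))$ one then reads off $f_\phi(x_i^+)=x_{\pi(i)}^{\eps_i}\in\Heav(\A)$, as required.

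For (2), the inclusion $\supseteq$ is part of (1). For the reverse, let $f\in\Aut_{\operatorname{\mathsf{filt}}}(\VG(\A))$. Since $f$ preserves $\Fil^1$, the idempotent condition $y^2=y$, and the elements $0,1$, it stabilises $\Fil^1\cap\Idem(\VG(\A))\setminus\{0,1\}$; by Lemma~\ref{lem:equal} (using $\cha R\neq 2$ and genericity in codimension $2$) this set equals $\Heav(\A)$, so $f$ permutes $\Heav(\A)$. The induced bijection $\overline f$ on $\PrimIdem(\VG(\A))\simeq\ch(\A)$ then preserves the invariant $\#\{x\in\Heav(\A)\mid x\cdot 1_C\neq x\cdot 1_{C'}\}=2\#\Sep(C,C')$ from Proposition~\ref{prop:rectopeg}, and hence preserves adjacency in $\Tope(\A)$; therefore $\overline f\in\Aut_{\operatorname{\mathsf{graph}}}(\Tope(\A))$.

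The substantive content has already been absorbed into Lemma~\ref{lem:equal}, so what remains is mostly bookkeeping. The only mildly delicate step is invoking Proposition~\ref{prop:graphisom} in (1) in a \emph{functorial} way, lifting the given $\phi$ to an explicit pair $(\pi,\eps)$ rather than merely asserting existence of \emph{some} oriented-matroid isomorphism. Once that lift is in hand, $f_\phi(x_i^+)$ is identified with a named Heaviside function and filtration-preservation is automatic.
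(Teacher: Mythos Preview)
Your proof is correct, and for the second inclusion in (1) and for (2) it is essentially identical to the paper's argument. The only real difference is in how you show that a tope-graph automorphism sends Heaviside functions to Heaviside functions.

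The paper does this directly and self-containedly: for adjacent chambers $C_0\subset H^+$ and $C_1\subset H^-$, it observes that
\[
\{C\mid C\subset H^+\}=\{C\mid d(C,C_0)<d(C,C_1)\},
\]
so each half-space is characterized purely by graph distance and is therefore preserved by any graph automorphism. You instead invoke Proposition~\ref{prop:graphisom} as a black box to lift $\phi$ to an oriented-matroid automorphism $(\pi,\eps)$, and then read off $f_\phi(x_i^+)=x_{\pi(i)}^{\eps_i}$. This is valid, but as you yourself note it requires the functorial form of the reconstruction theorem (every tope-graph isomorphism is induced by an oriented-matroid isomorphism), which is true but stronger than the bare statement quoted in the paper. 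The paper's distance argument is in effect the elementary core of that reconstruction, so it avoids the appeal to a deeper result and keeps the proof self-contained.
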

\begin{proof}
$(1)$ Since 
$\Aut_{\operatorname{\mathsf{filt}}}(\VG(\A))\subseteq 
\Aut_{\operatorname{\mathsf{alg}}}(\ch(\A))$, 
the inclusion 
$\Aut_{\operatorname{\mathsf{filt}}}(\VG(\A))\subseteq 
\Aut_{\operatorname{\mathsf{set}}}(\ch(\A))$ 
was observed above. 

If 
$f:\Tope(\A)\stackrel{\simeq}{\to}\Tope(\A)$ is a graph 
automorphism, it induces a permutation of chambers and 
hence an algebra automorphism of $\VG(\A)$. 
It remains to show that $f$ preserves the filtration, 
equivalently, $f$ preserves Heaviside functions. 

If $C_0, C_1\in\ch(\A)$ are adjacent and 
separated by a hyperplane $H$ with $C_0\subset H^+$ and 
$C_1\subset H^-$, then 
\begin{equation}
\label{eq:grahalf}
\begin{aligned}
\{C\in\ch(\A)\mid C\subset H^+\}&=\{C\in\ch(\A)\mid d(C, C_0)<d(C, C_1)\}\\
\{C\in\ch(\A)\mid C\subset H^-\}&=\{C\in\ch(\A)\mid d(C, C_0)>d(C, C_1)\}. 
\end{aligned}
\end{equation}
(Note that $d(C, C_0)=d(C, C_1)\pm 1$.) 
The right-hand sides of \eqref{eq:grahalf} are 
expressed purely in graph-theoretic terms, hence are 
preserved by graph automorphisms. 
Thus the Heaviside functions are preserved, so 
$f\in\Aut_{\mathsf{filt}}(\VG(\A))$. 

$(2)$ Let $f: \VG(\A)\stackrel{\simeq}{\to}\VG(\A)$ be a 
filtered automorphism. Then $f$ preserves 
$\Fil^1\VG(\A)$ and $\Idem(\A)$, and 
by Lemma \ref{lem:equal}, it preserves $\Heav(\A)$. 
By Proposition \ref{prop:rectopeg},  
adjacency of the chambers is determined by $\Heav(\A)$, 
so $f$ induces a graph automorphism. Hence 
$\Aut_{\mathsf{filt}}(\VG(\A))\subseteq 
\Aut_{\mathsf{graph}}(\Tope(\A))$, 
proving equality. 
\end{proof}

Example \ref{ex:alternating} shows that the strict inclusion 
\begin{equation}
\Aut_{\operatorname{\mathsf{graph}}}(\Tope(\A))
\subsetneq
\Aut_{\operatorname{\mathsf{filt}}}(\VG(\A)). 
\end{equation}
can occur in general when $\A$ is not generic in codimension $2$. 

\subsection{The intersection lattice does not determine the filtered VG algebra}
\label{subsec:nonisom}

In this section, we exhibit an explicit pair of arrangements 
$\A_1$ and $\A_2$ such that $L(\A_1)\simeq L(\A_2)$ as 
lattices but $\VG(\A_1)\not\simeq\VG(\A_2)$ as filtered 
algebras. Thus the intersection lattice does not determine 
the filtered Varchenko-Gelfand algebra. 

Based on Theorem \ref{thm:gencodim2}, it is enough 
to construct a pair of arrangements such that 
\begin{itemize}
\item they are generic in codimension $2$, 
\item their intersection lattices are isomorphic, 
\item their tope graphs are not isomorphic. 
\end{itemize}
We can construct such a (well-known) pair using 
generic $6$-planes in $\R^3$. 

\begin{example}
\label{ex:6planes}
Assume $\cha R\neq 2$. 
Let $\A_1$ and $\A_2$ be six generic planes in $\R^3$ as in 
Figure \ref{fig:6planes}. 

The intersection lattices $L(\A_1)$ and $L(\A_2)$ are isomorphic. 
However, tope graphs $\Tope(\A_1)$ and $\Tope(\A_2)$ are 
not isomorphic. 
Indeed, $\Tope(\A_1)$ has two vertices of degree $6$, 
whereas, $\Tope(\A_2)$ has none. 

These arrangements are 
clearly generic in codimension $2$. Therefore, 
$\VG(\A_1)\not\simeq\VG(\A_2)$ as filtered algebras. 
\begin{figure}[htbp]
\centering
\begin{tikzpicture}

\draw (2,0) node[below] {$\A_1$}; 
\draw[thick] (1.5,0) -- (1.5,3); 
\draw[thick] (1.5,3) .. controls (1.5,3.5) and (1.5,3.5) .. (2.25,4); 
\draw[thick] (2.25,0) -- (2.25,3);
\draw[thick] (2.25,3) .. controls (2.25,3.5) and (2.25,3.5) .. (1.5,4); 

\draw[thick] (0,0) -- (4,2);
\draw[thick] (4,2) .. controls (4.5,2.25) and (4.5,2.25) .. (5,3.25); 
\draw[thick] (0,0.75) -- (4,2.75);
\draw[thick] (4,2.75) .. controls (4.5,3) and (4.5,3) .. (5,2.5); 

\draw[thick] (4,0) -- (0,2);
\draw[thick] (4,0) .. controls (4.5,-0.25) and (4.5,-0.25) .. (5,0.75); 
\draw[thick] (4,0.75) -- (0,2.75);
\draw[thick] (4,0.75) .. controls (4.5,0.5) and (4.5,0.5) .. (5,-0.5); 

\draw (8,0) node[below] {$\A_2$}; 
\draw[thick] (7.5,0) -- (7.5,3); 
\draw[thick] (7.5,3) .. controls (7.5,3.5) and (7.5,3.5) .. (8.25,4); 
\draw[thick] (8.25,0) -- (8.25,3);
\draw[thick] (8.25,3) .. controls (8.25,3.5) and (8.25,3.5) .. (7.5,4); 

\draw[thick] (6,0) -- (10,2);
\draw[thick] (10,2) .. controls (10.5,2.25) and (10.5,2.25) .. (11,3.25); 
\draw[thick] (6,0.75) -- (10,2.75);
\draw[thick] (10,2.75) .. controls (10.5,3) and (10.5,3) .. (11,2.5); 

\draw[thick] (10,0) -- (6,2);
\draw[thick] (10,0) .. controls (10.5,-0.25) and (10.5,-0.25) .. (11,0.75); 
\draw[thick] (10,1.25) -- (6,3.25);
\draw[thick] (10,1.25) .. controls (10.5,1) and (10.5,1) .. (11,-0.5); 

\end{tikzpicture}
\caption{Six generic planes in $\R^3$.}
\label{fig:6planes}
\end{figure}
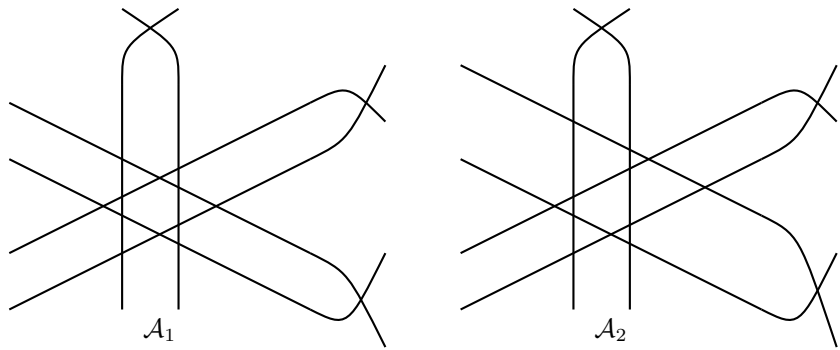
\end{example}

\section{Conjectural reconstruction algorithm}
\label{sec:conj}

\subsection{The Sylvester-Gallai Theorem and generalized Heaviside functions}
\label{subsec:gheav}

For simplicity, let 
\begin{equation}
\gHeav(\A)=\Fil^1\cap\Idem(\VG(\A))\setminus\{0, 1\}, 
\end{equation}
and call an element $y\in\gHeav(\A)$ a 
\emph{generalized Heaviside function}. 
The key step in \S \ref{subsec:gencodim2} (Lemma \ref{lem:equal}) 
was that if $\A$ is generic in codimension $2$, then 
\begin{equation}
\Heav(\A)=\gHeav(\A). 
\end{equation}
In this section, we analyze $\gHeav(\A)$ more closely. 
The element in Example \ref{ex:alternating} is a generalized Heaviside 
function. We can generalize this construction as follows. 

\begin{example}
\label{eq:stripe}
Let $H_1, H_2, \dots, H_{2s+1}$ be $2s+1$ ($s\in\Z_{\geq 0}$) 
hyperplanes such that $X=\bigcap_{i=1}^{2s+1}H_i$ is codimension $2$. 
We define the generalized Heaviside functions 
$\Alt^\pm (H_1, \dots, H_{2s+1})$ as follows. 

(Intuitive definition) 
There are $4s+2$ chambers. Assign values $0$ and $1$ 
alternatingly to these chambers. This produces two 
idempotent elements $\Alt^+(H_1, \dots, H_{2s+1})$ 
and $\Alt^-(H_1, \dots, H_{2s+1})$ in $\gHeav(\A)$ 
(Figure \ref{fig:Alt}). 
Note that the case $s=0$ corresponds to an ordinary 
Heaviside function. 

(More formal definition) 
Fix an orientation of the transversal plane to $X$. 
We may assume (after renumbering and reorienting if necessary) 
that $4s+2$ chambers $C_1, C_2, \dots, C_{4s+2}$ 
are arranged counterclockwise. Further assume 
$C_1=H_1^+\cap H_2^+, 
C_2=H_2^-\cap H_3^-, 
C_3=H_3^+\cap H_4^+, 
\dots, C_{2s+1}=H_{2s+1}^+\cap H_1^+, 
C_{2s+2}=H_1^-\cap H_2^-, \dots$. 
Then, define 
\begin{equation}
\begin{aligned}
\Alt^+(H_1, \dots H_{2s+1})
:=&1_{C_1}+1_{C_3}+1_{C_5}+\cdots+1_{C_{4s+1}}\\
=&
x_1^+x_2^++x_3^+x_4^++\cdots x_{2s-1}^+x_{2s}^++x_{2s+1}^+x_1^+\\
=&x_1^++x_2^++\cdots+x_{2s+1}^+-s, 
\\
\Alt^-(H_1, \dots H_{2s+1})
:=&1-\Alt^+(H_1, \dots H_{2s+1}). 
\end{aligned}
\end{equation}
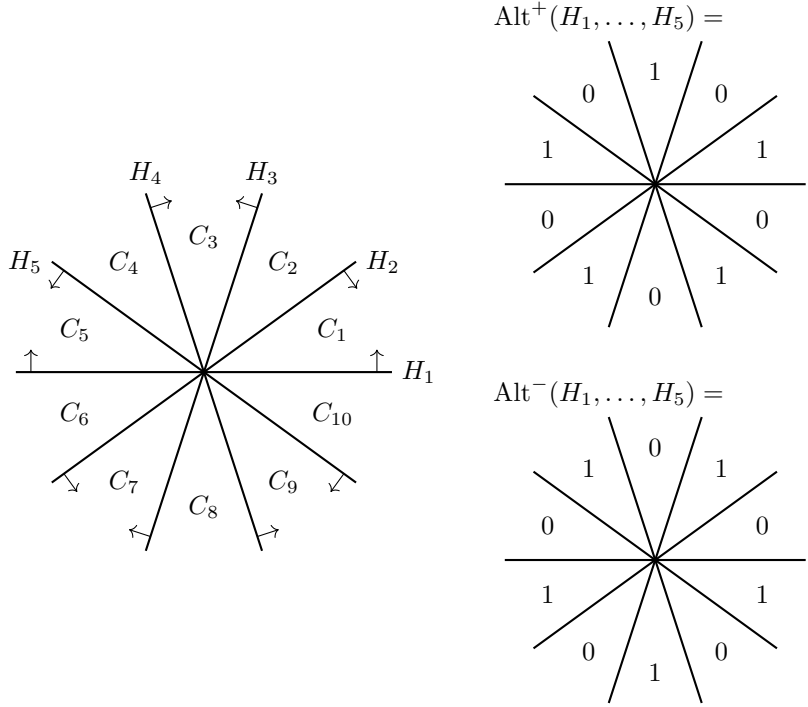
\begin{figure}[htbp]
\centering
\begin{tikzpicture}

\coordinate (A1) at (0,0); 
\coordinate (A2) at (6,2.5); 
\coordinate (A3) at (6,-2.5); 

\draw[thick] (A1)--++(0:2.5) node[right] {$H_1$};
\draw[thick] (A1)--++(36:2.5) node[right] {$H_2$};
\draw[thick] (A1)--++(72:2.5) node[above] {$H_3$};
\draw[thick] (A1)--++(108:2.5) node[above] {$H_4$};
\draw[thick] (A1)--++(144:2.5) node[left] {$H_5$};
\draw[thick] (A1)--++(180:2.5);
\draw[thick] (A1)--++(216:2.5);
\draw[thick] (A1)--++(252:2.5);
\draw[thick] (A1)--++(288:2.5);
\draw[thick] (A1)--++(324:2.5);
\draw[->] ($(A1)+(0:2.3)$) --++(90:0.3);
\draw[->] ($(A1)+(36:2.3)$) --++(306:0.3);
\draw[->] ($(A1)+(72:2.3)$) --++(162:0.3);
\draw[->] ($(A1)+(108:2.3)$) --++(18:0.3);
\draw[->] ($(A1)+(144:2.3)$) --++(234:0.3);
\draw[->] ($(A1)+(180:2.3)$) --++(90:0.3);
\draw[->] ($(A1)+(216:2.3)$) --++(306:0.3);
\draw[->] ($(A1)+(252:2.3)$) --++(162:0.3);
\draw[->] ($(A1)+(288:2.3)$) --++(18:0.3);
\draw[->] ($(A1)+(324:2.3)$) --++(234:0.3);
\draw ($(A1)+(18:1.8)$) node {$C_1$};
\draw ($(A1)+(54:1.8)$) node {$C_2$};
\draw ($(A1)+(90:1.8)$) node {$C_3$};
\draw ($(A1)+(126:1.8)$) node {$C_4$};
\draw ($(A1)+(162:1.8)$) node {$C_5$};
\draw ($(A1)+(198:1.8)$) node {$C_6$};
\draw ($(A1)+(234:1.8)$) node {$C_7$};
\draw ($(A1)+(270:1.8)$) node {$C_8$};
\draw ($(A1)+(306:1.8)$) node {$C_9$};
\draw ($(A1)+(342:1.8)$) node {$C_{10}$};

\draw[thick] (A2)--++(0:2);
\draw[thick] (A2)--++(36:2);
\draw[thick] (A2)--++(72:2);
\draw[thick] (A2)--++(108:2) node[above] {$\Alt^+(H_1, \dots, H_5)=$};
\draw[thick] (A2)--++(144:2);
\draw[thick] (A2)--++(180:2);
\draw[thick] (A2)--++(216:2);
\draw[thick] (A2)--++(252:2);
\draw[thick] (A2)--++(288:2);
\draw[thick] (A2)--++(324:2);
\draw ($(A2)+(18:1.5)$) node {$1$};
\draw ($(A2)+(54:1.5)$) node {$0$};
\draw ($(A2)+(90:1.5)$) node {$1$};
\draw ($(A2)+(126:1.5)$) node {$0$};
\draw ($(A2)+(162:1.5)$) node {$1$};
\draw ($(A2)+(198:1.5)$) node {$0$};
\draw ($(A2)+(234:1.5)$) node {$1$};
\draw ($(A2)+(270:1.5)$) node {$0$};
\draw ($(A2)+(306:1.5)$) node {$1$};
\draw ($(A2)+(342:1.5)$) node {$0$};

\draw[thick] (A3)--++(0:2);
\draw[thick] (A3)--++(36:2);
\draw[thick] (A3)--++(72:2);
\draw[thick] (A3)--++(108:2) node[above] {$\Alt^-(H_1, \dots, H_5)=$};
\draw[thick] (A3)--++(144:2);
\draw[thick] (A3)--++(180:2);
\draw[thick] (A3)--++(216:2);
\draw[thick] (A3)--++(252:2);
\draw[thick] (A3)--++(288:2);
\draw[thick] (A3)--++(324:2);
\draw ($(A3)+(18:1.5)$) node {$0$};
\draw ($(A3)+(54:1.5)$) node {$1$};
\draw ($(A3)+(90:1.5)$) node {$0$};
\draw ($(A3)+(126:1.5)$) node {$1$};
\draw ($(A3)+(162:1.5)$) node {$0$};
\draw ($(A3)+(198:1.5)$) node {$1$};
\draw ($(A3)+(234:1.5)$) node {$0$};
\draw ($(A3)+(270:1.5)$) node {$1$};
\draw ($(A3)+(306:1.5)$) node {$0$};
\draw ($(A3)+(342:1.5)$) node {$1$};

\end{tikzpicture}
\caption{$\Alt^+(H_1, \dots, H_{2s+1})$ and 
$\Alt^-(H_1, \dots, H_{2s+1})$ ($s=2$).}
\label{fig:Alt}
\end{figure}
Note that this construction works only when the number 
of hyperplanes $H_1, \dots, H_{2s+1}$ is odd. 
\end{example}

Before stating the main result, recall Sylvester-Gallai 
Theorem (see, e.g., \cite[Proposition 6.1.1]{ori-mat}): Let $L_1, \dots, L_k\subset \mathbb{RP}^2$ be 
lines in the real projective plane $\mathbb{RP}^2$. 
Then it has either the intersection 
$\bigcap_{i=1}^kL_i\neq\emptyset$ (pencil type), 
or there exists a point $p\in\mathbb{RP}^2$ through which 
exactly two of the lines pass. 
From the Sylvester-Gallai theorem, we have the following. 
\begin{lemma}
{\label{lem:SG}}
Let $\A=\{H_1, \dots, H_n\}$ be an arrangement in $\R^\ell$ 
with $\ell\geq 3$. Then either one of the following 
$(a)$ or $(b)$ holds. 
\begin{itemize}
    \item[$(a)$] The intersection $\cap\A=\bigcap_{H\in\A}H$ has codimension at most $2$.  
    \item[$(b)$] There exists a pair of hyperplanes 
    $H, H'\in\A$ such that no other hyperplane $H''\in\A$ contains $H\cap H'$. 
\end{itemize}
\end{lemma}
\begin{proof}
Suppose that $\cap\A$ has codimension at least $3$. 
Choose $X\in L(\A)$ with $\codim X=3$. 
Let $p\in X$ be a generic point and let $D\simeq 
D^3$ be a $3$-dimensional disk transversal to $X$ with 
$D\cap X=\{p\}$. 
Then the arrangement $\A\cap D$ is isomorphic to 
the localization $\A_X$. 
By the Sylvester-Gallai Theorem, 
there exist $H, H'\in\A_X$ 
such that no other hyperplanes in $\A$ contain $H\cap H'$. 
\end{proof}

The following theorem shows that every element in $\gHeav(\A)$ 
is of the form $\Alt^\pm(H_1, \dots, H_{2s+1})$. 

\begin{theorem}
\label{thm:genHeav}
Assume $\cha R\neq 2$. 

(1) Let $y\in\Fil^1\VG(\A)$ with 
$\Supp (y)=\{H_{i_1}, \dots, H_{i_r}\}$.  
If $y$ is not a constant and $y^2\in\Fil^1\VG(\A)$, 
then $r$ is odd with the intersection 
$\cap\Supp(y):=\bigcap_{H\in\Supp(y)}H$ has codimension 
at most $2$ and $y$ is expressed as $y=c_1\Alt^+(H_{i_1}, \dots, H_{i_r})+c_2$ for some $c_1, c_2\in R$.

(2) 
Let $y\in\gHeav(\A)$. Then there exist an odd number of 
hyperplanes $H_1, \dots, H_{2s+1}\in\A$ ($s\geq 0$) such that 
the intersection 
$X=\bigcap_{i=1}^{2s+1}H_i$ has codimension at most $2$, and 
$y=\Alt^\pm(H_1, \dots, H_{2s+1})$. 
\end{theorem}

\begin{proof}
(1) Let $y\in\Fil^1$. Assume $y^2\in\Fil^1$ and $y$ is not constant. 
Let $H\in\Supp(y)$. 
By Proposition \ref{prop:vgrho}, $\rho_H(y)=\gamma_1$ 
is a nonzero constant function on $H$. 
Write $\ch(\A^H)=\{D_1, \dots, D_s\}$, and set 
$\alpha_i=y(D_i^+)$, and 
$\beta_i=y(D_i^-)$ (see Figure \ref{fig:sqzero}). 
\begin{figure}[htbp]
\centering
\begin{tikzpicture}

\draw[ultra thick] (-1,1) -- (8,1) node[right] {$H$};
\draw[-stealth, thick] (7.8, 1) --++(0, 0.3);
\draw (0.5,0) -- ++(1.75,3.5);
\draw (1,0) -- ++(0,3.5);
\draw (5,0) -- ++(-3.5,3.5);
\draw (6.5,0) -- ++(-1.75,3.5);

\draw (-0.5,1) node[below] {\footnotesize $D_1$}; 
\draw (2,1) node[below] {\footnotesize $D_2$}; 
\draw (5,1) node[below] {\footnotesize $D_3$}; 
\draw (7,1) node[below] {\footnotesize $D_4$}; 

\draw (0,2) node {$\alpha_1$}; 
\draw (-0.5,0) node {$\beta_1$}; 
\draw (2.3,2) node {$\alpha_2$}; 
\draw (3,0) node {$\beta_2$}; 
\draw (4.5,2) node {$\alpha_3$}; 
\draw (5.5,0) node {$\beta_3$}; 
\draw (6.5,2) node {$\alpha_4$}; 
\draw (7.5,0) node {$\beta_4$}; 

\filldraw[draw=black, fill=white] (1,1) circle [radius=0.1]; 
\filldraw[draw=black, fill=white] (4,1) circle [radius=0.1]; 
\filldraw[draw=black, fill=white] (6,1) circle [radius=0.1]; 

\end{tikzpicture}
\caption{Square-zero elements.}
\label{fig:sqzero}
\end{figure}
Then, 
\begin{equation}
\label{eq:constdiff}
\alpha_1-\beta_1=\alpha_2-\beta_2=\cdots
=\alpha_s-\beta_s=\gamma_1
\end{equation}
is a nonzero constant. Since $y^2\in\Fil^1$, 
again by Proposition \ref{prop:vgrho}, there is a 
constant $\gamma_2$ satisfying 
\begin{equation}
\label{eq:squarediff}
\alpha_1^2-\beta_1^2=\alpha_2^2-\beta_2^2=\cdots
=\alpha_s^2-\beta_s^2=\gamma_2. 
\end{equation}
We also have 
\begin{equation}
\label{eq:constsum}
\alpha_1+\beta_1=\alpha_2+\beta_2=\cdots
=\alpha_s+\beta_s=\gamma_3, 
\end{equation}
where $\gamma_3=\gamma_2/\gamma_1$. 
Since $\cha R\neq 2$, we have 
\begin{equation}
\label{eq:sol}
\alpha_1=\alpha_2=\cdots=\alpha_s=\frac{\gamma_1+\gamma_3}{2}, \ \ 
\beta_1=\beta_2=\cdots=\beta_s=\frac{\gamma_3-\gamma_1}{2}. 
\end{equation}
By Lemma \ref{lem:SG}, either 
\begin{itemize}
\item[$(a)$] 
$\codim\bigcap_{H\in\Supp(y)}H\leq 2$, or 
\item[$(b)$]  
there exist $H, H'\in\Supp(y)$ such that no other hyperplane 
in $\Supp(y)$ contains $H\cap H'$, 
\end{itemize}
holds. Case $(b)$ contradicts the fact that $H, H'\in\Supp(y)$ 
(See Figure \ref{fig:aroundp}, by the above arguments, 
$y(C_1)=y(C_2)=y(C_3)=y(C_4)$ holds). Thus $(a)$ holds. 
It is easily seen that $\#\Supp(y)$ must be odd; otherwise 
one can not assign the two constant values consistently. 
Hence $\#\Supp(y)=2s+1$ and the values alternate 
around the codimension $2$ intersection (Figure \ref{fig:sqzeroAlt}). 
Therefore, $y$ can be expressed as 
$y=c_1\Alt^+(H_{i_1}, \dots, H_{i_r})+c_2$ for some 
$c_1, c_2\in R$.
\begin{figure}[htbp]
\centering
\begin{tikzpicture}

\coordinate (A1) at (0,0); 
\coordinate (A2) at (6,2.5); 
\coordinate (A3) at (6,-2.5); 

\draw[thick] (A1)--++(0:2.5) node[right] {$H_1$};
\draw[thick] (A1)--++(36:2.5) node[right] {$H_2$};
\draw[thick] (A1)--++(72:2.5) node[above] {$H_3$};
\draw[thick] (A1)--++(108:2.5) node[above] {$H_4$};
\draw[thick] (A1)--++(144:2.5) node[left] {$H_5$};
\draw[thick] (A1)--++(180:2.5);
\draw[thick] (A1)--++(216:2.5);
\draw[thick] (A1)--++(252:2.5);
\draw[thick] (A1)--++(288:2.5);
\draw[thick] (A1)--++(324:2.5);
\draw[->] ($(A1)+(0:2.3)$) --++(90:0.3);
\draw[->] ($(A1)+(36:2.3)$) --++(306:0.3);
\draw[->] ($(A1)+(72:2.3)$) --++(162:0.3);
\draw[->] ($(A1)+(108:2.3)$) --++(18:0.3);
\draw[->] ($(A1)+(144:2.3)$) --++(234:0.3);
\draw[->] ($(A1)+(180:2.3)$) --++(90:0.3);
\draw[->] ($(A1)+(216:2.3)$) --++(306:0.3);
\draw[->] ($(A1)+(252:2.3)$) --++(162:0.3);
\draw[->] ($(A1)+(288:2.3)$) --++(18:0.3);
\draw[->] ($(A1)+(324:2.3)$) --++(234:0.3);
\draw ($(A1)+(18:1.8)$) node {$\alpha$};
\draw ($(A1)+(54:1.8)$) node {$\beta$};
\draw ($(A1)+(90:1.8)$) node {$\alpha$};
\draw ($(A1)+(126:1.8)$) node {$\beta$};
\draw ($(A1)+(162:1.8)$) node {$\alpha$};
\draw ($(A1)+(198:1.8)$) node {$\beta$};
\draw ($(A1)+(234:1.8)$) node {$\alpha$};
\draw ($(A1)+(270:1.8)$) node {$\beta$};
\draw ($(A1)+(306:1.8)$) node {$\alpha$};
\draw ($(A1)+(342:1.8)$) node {$\beta$};

\end{tikzpicture}
\caption{$y=(\alpha-\beta)\cdot\Alt^+(H_1, \dots, H_{2s+1})+\beta$.}
\label{fig:sqzeroAlt}
\end{figure}
(2) is immediate from (1). 
\end{proof}

\subsection{Conjectural algorithm}
\label{subsec:reconst}

In this section, let $R$ be an integral domain with $\cha R\neq 2$. 
Even when the arrangement $\A$ is not generic in codimension $2$, 
the filtered algebra $\VG(\A)$ may still determine the tope graph 
$\Tope(\A)$. 
We formulate the following conjecture. 

\begin{conjecture}
\label{conj:reconst}
Assume $\cha R\neq 2$. 
Let $\A_1, \A_2$ be real arrangements. If $\VG(\A_1)$ and 
$\VG(\A_2)$ are isomorphic as filtered algebras 
(or, if $\grVG^\bullet(\A_1)$ and 
$\grVG^\bullet(\A_2)$ are isomorphic as graded algebras), then 
tope graphs $\Tope(\A_1)$ and $\Tope(\A_2)$ are isomorphic. 
\end{conjecture}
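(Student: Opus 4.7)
The plan is to extend the strategy of \Cref{thm:gencodim2} to arrangements that are not generic in codimension~$2$, by identifying $\Heav(\A)$ algorithmically inside $\gHeav(\A)$. Since $\VG(\A)$ already determines $\ch(\A) = \PrimIdem(\VG(\A))$, the set $\gHeav(\A)$, and the adjacency test of \Cref{prop:rectopeg}, the entire problem reduces to selecting a sub-collection of $\gHeav(\A)$ that realizes the true Heavisides; the tope graph (hence the oriented matroid, by \Cref{prop:graphisom}) would then follow.

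The starting point is \Cref{thm:genHeav}: every element of $\gHeav(\A)$ has the explicit form $\Alt^\pm(H_1,\dots,H_{2s+1})$, and the identity
\begin{equation*}
\Alt^+(H_1,\dots,H_{2s+1}) = x_1^+ + x_2^+ + \cdots + x_{2s+1}^+ - s
\end{equation*}
expresses each non-Heaviside generalized Heaviside as an explicit integer combination of ordinary Heavisides. This motivates an \emph{indecomposability} criterion: call a pair $\{y, 1-y\} \subset \gHeav(\A)$ \emph{primitive} if $y$ modulo $\Fil^0\VG(\A)$ cannot be written as a sum of the form $y_1 + \cdots + y_{2s+1} - s$ with $s \geq 1$ and $y_1,\dots,y_{2s+1} \in \gHeav(\A)$ distinct. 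The non-deterministic algorithm would (a) extract all primitive pairs from $\gHeav(\A)$, (b) check that exactly $n = \#\A = \rank\grVG^1(\A)$ such pairs are obtained, where $n$ is algebraically recoverable by \Cref{prop:recrank}, and (c) build the graph on $\PrimIdem(\VG(\A))$ with edges given by the separation-count-$2$ rule of \Cref{prop:rectopeg}.

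For the graded version of the conjecture, primitive idempotents are no longer available, and one would instead reconstruct the signed circuits $\circuit(\A)$ from square-zero elements of $\grVG^1(\A)$ together with the relations \eqref{eq:grVG} of \Cref{prop:presenGRVG}, then pass to $\Tope(\A)$ via \Cref{prop:graphisom}. The two strategies are linked by the observation that $\overline{\Alt^\pm(H_1,\dots,H_{2s+1})} = \sum_{i=1}^{2s+1} \overline{x_i^+}$ in $\grVG^1(\A)$ is precisely the type of square-zero sum governed by a codimension-$2$ localization, so that generalized Heavisides in the filtered setting and square-zero elements in the graded setting encode the same rank-$2$ information.

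The main obstacle will be to prove that the algorithm is well-defined — independent of the non-deterministic choices — and that it always outputs $\Tope(\A)$. The difficulty is genuine: \Cref{ex:alternating} produces algebra automorphisms of $\VG(\A)$ that interchange true Heavisides with alternating ones, so any purported intrinsic characterization of ``Heaviside'' must tolerate such moves at codimension-$2$ flats of odd multiplicity $\geq 3$. Overcoming this likely requires combining a global argument based on the deletion--restriction exact sequence \eqref{eq:shortexact} with local Sylvester--Gallai analysis, as in the proof of \Cref{thm:genHeav}, on each rank-$2$ localization, so that any consistent set of candidate Heavisides restricts compatibly at every codimension-$2$ flat and thereby reproduces the correct tope graph up to reorientation and relabeling of the ground set.
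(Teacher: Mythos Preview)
The statement is a \emph{conjecture}: the paper does not prove it but instead proposes a separate conjectural algorithm (\Cref{conj:algorithm}) and establishes only the codimension-$2$ generic case. There is thus no proof to compare against; what can be assessed is whether your outline is a viable route.

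Your proposed \emph{primitivity} criterion is concretely broken by \Cref{ex:alternating}. There $\gHeav(\A)=\{x_1^\pm,x_2^\pm,x_3^\pm,y^\pm\}$ with $y^+=x_1^++x_2^++x_3^+-1$, so $y^+$ is not primitive; but equally $x_1^+=y^++x_2^-+x_3^--1$ with $y^+,x_2^-,x_3^-\in\gHeav(\A)$ distinct, so $x_1^+$ is not primitive either. By symmetry \emph{no} element of $\gHeav(\A)$ is primitive, and step~(b) returns zero primitive pairs rather than $n=3$. You effectively acknowledge this obstruction in your final paragraph when observing that the filtered automorphism $x_1^+\mapsto y^-$ forbids any intrinsic characterization of Heavisides, but this means the criterion you introduced two paragraphs earlier cannot work. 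The paper's \Cref{conj:algorithm} sidesteps the issue by abandoning the attempt to single out Heavisides: it accepts \emph{any} basis of generalized Heavisides for $\Fil^1$, builds the generalized tope graph, and conjectures that whenever the result is a valid oriented-matroid tope graph it must be isomorphic to $\Tope(\A)$; \Cref{ex:nonHeav} illustrates that this can succeed even with several non-Heavisides in the basis.
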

By Theorem \ref{thm:gencodim2}, Conjecture \ref{conj:reconst} holds 
for arrangements that are generic in codimension $2$ when 
$\cha R\neq 2$. As seen in \S \ref{subsec:gheav}, however, 
there are many generalized Heaviside 
functions that are not Heaviside functions. 
To formulate a reconstruction procedure from $\VG(\A)$, 
we introduce the notion of ``generalized tope graphs.'' 

\begin{definition}
Let $y_1, \dots, y_p\in\gHeav(\A)$. 
The \emph{generalized tope graph} 
$\gTope(y_1, \dots, y_p)$ is the graph defined as follows: 
\begin{itemize}
\item 
The vertex set is the set of primitive idempotents 
$\PrimIdem(\VG(\A))$. 
\item 
A pair $\{v, v'\}\subset\PrimIdem(\VG(\A))$ forms an edge 
if and only if 
\begin{equation}
\label{eq:sep}
\#\{i\in [p]\mid y_iv\neq y_iv'\}=1. 
\end{equation}
\end{itemize}
\end{definition}

\begin{conjecture}[Non-deterministic reconstruction]
\label{conj:algorithm}
Let $\A=\{H_1, \dots, H_n\}$ be a real arrangement in $\R^\ell$. 
Perform the following steps:  
\begin{itemize}
\item[(Step 1)] 
Choose elements $y_1, \dots, y_n\in\gHeav(\A)$ such that 
$1, y_1, \dots, y_n$ form a basis of $\Fil^1\VG(\A)$. 
\item[(Step 2)] 
If the generalized tope graph $\gTope(y_1, \dots, y_n)$ is the 
tope graph of some oriented matroid, stop. 
Otherwise, return to (Step 1) with a different choice of 
$y_1, \dots, y_n$. 
\end{itemize}
Then the resulting graph $\gTope(y_1, \dots, y_n)$ 
is isomorphic to the tope graph $\Tope(\A)$. 
\end{conjecture}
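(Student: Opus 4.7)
The plan is to combine the classification of generalized Heaviside functions in \Cref{thm:genHeav} with the combinatorial rigidity of oriented-matroid tope graphs. By \Cref{thm:genHeav}, each $y_i$ is of the form $\Alt^{\epsilon_i}(H_{i,1},\dots,H_{i,2s_i+1})$ for some odd collection of hyperplanes whose intersection has codimension two (the case $s_i = 0$ being an ordinary Heaviside function). In terms of the standard Heaviside generators one can write
\begin{equation*}
y_i \;=\; \sum_{H_j \in S_i} \delta_{ij}\, x_j^{+} \;+\; c_i, \qquad \delta_{ij} \in \{\pm 1\},\; c_i \in R,
\end{equation*}
where $S_i := \Supp(y_i)$. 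The basis condition on $\{1, y_1, \dots, y_n\}$ forces the $n \times n$ incidence matrix $M := (\delta_{ij})$ to be invertible over the fraction field of $R$.

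The first step is to compute the edge structure explicitly. If $C, C'$ are adjacent in $\Tope(\A)$ and separated by $H_k$, then the proof of \Cref{thm:genHeav} shows that $y_i(C) \neq y_i(C')$ precisely when $H_k \in S_i$. Hence such a pair contributes an edge in $\gTope(y_1,\dots,y_n)$ iff $H_k$ lies in exactly one support $S_i$. On the other hand, spurious edges between $\Tope(\A)$-non-adjacent chambers may appear due to cancellations of the linear expressions of the $y_i$ along longer paths. The next step is to exploit the hypothesis that $\gTope(y_1,\dots,y_n)$ is a tope graph of some oriented matroid $\mathcal{M}$. Such graphs are isometric subgraphs of the Boolean cube whose Djoković–Winkler equivalence classes of edges are in bijection with the ground set. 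The map $C \mapsto (y_1(C),\dots,y_n(C))$ must realize this isometric embedding, and each $y_i$ plays the role of a pseudohyperplane of $\mathcal{M}$ whose two sides are the level sets $\{y_i=0\}$ and $\{y_i=1\}$.

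The core of the argument is then to show that this constrains $\mathcal{M}$ to be isomorphic to the oriented matroid of $\A$. A natural route is induction on the \emph{defect} $\sum_i(|S_i|-1)$, which measures how far the basis is from the standard Heaviside one. When the defect is zero, $\gTope(y_1,\dots,y_n)$ coincides with $\Tope(\A)$ after a signed permutation of coordinates. When the defect is positive, one picks a codim-$2$ flat $X$ at which some $y_i$ is genuinely a non-Heaviside $\Alt^\pm$. The $2k$-cycle of $\Tope(\A)$ around $X$ is mapped into $\gTope$ in a prescribed alternating pattern, and the cycle structure that an OM tope graph must exhibit around the corresponding codim-$2$ flat of $\mathcal{M}$, combined with the Sylvester–Gallai argument of \Cref{thm:genHeav}, should either force a contradiction or exhibit an explicit change-of-basis from $(y_1,\dots,y_n)$ to the standard Heaviside basis, showing $\mathcal{M} \simeq $ the oriented matroid of $\A$.

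The hard part, and the reason the statement remains a conjecture, is precisely this last rigidity step: ruling out that several overlapping non-Heaviside $y_i$ conspire, through coordinated cancellations, to produce a bona fide tope graph of a genuinely different oriented matroid. Local arguments around a single codim-$2$ flat are not by themselves sufficient, because the $y_i$ interact through shared hyperplanes in their supports $S_i$. A successful proof will likely need to combine the isometric-embedding characterization of tope graphs with the full presentation of $\VG(\A)$ given in \Cref{prop:presenVG}, so that the relations forced by signed circuits of $\A$ constrain the admissible supports $S_i$ to be singletons up to a global reorientation and permutation.
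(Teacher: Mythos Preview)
The statement you are addressing is \Cref{conj:algorithm}, which the paper presents as an open \emph{conjecture}: the paper gives no proof, only the remarks that the procedure terminates (since $\gHeav(\A)$ is finite) and that the algorithm succeeds whenever the chosen $y_i$ happen to be genuine Heaviside functions, followed by \Cref{ex:nonHeav} illustrating both successful and unsuccessful choices. There is therefore no ``paper's own proof'' against which to compare your attempt.

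Your write-up is not a proof either, and you say so explicitly in the final paragraph. What you have is a reasonable outline of where a proof would have to go: the expression of each $y_i$ as a signed sum of Heaviside functions via \Cref{thm:genHeav}, the invertibility of the resulting coefficient matrix, and the observation that an edge of $\Tope(\A)$ across $H_k$ survives in $\gTope(y_1,\dots,y_n)$ exactly when $H_k$ lies in a unique $\Supp(y_i)$. You also correctly isolate the genuine obstruction: nothing in your inductive ``defect'' scheme actually rules out that several non-Heaviside $y_i$ with overlapping supports produce a graph which \emph{is} the tope graph of some oriented matroid $\mathcal{M}$ not isomorphic to that of $\A$. The local analysis around a single codimension-$2$ flat does not control the global interaction of the supports, and invoking the isometric-embedding or Djokovi\'c--Winkler characterization of tope graphs is suggestive but not an argument. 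In short, your proposal accurately diagnoses why the statement is a conjecture rather than a theorem; it does not close the gap, and neither does the paper.
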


Since $n=\#\A$ is determined purely from 
the filtered algebra structure 
(Proposition \ref{prop:recrank}) and $\gHeav(\A)$ is a finite set, 
the procedure above terminates in finitely many trials.  
Moreover, if the chosen $y_1,\dots,y_n$ happen to be the Heaviside
functions, then the graph obtained in (Step 2) is exactly the tope 
graph of $\A$, because condition~\eqref{eq:sep} coincides with
adjacency of chambers (Proposition \ref{prop:rectopeg}).

It is worth noting that even if $y_1, \dots, y_n$ are not (all) 
Heaviside functions, the graph $\gTope(y_1,\dots,y_n)$ may 
still be a tope graph of an oriented matroid 
(see the example below), and in many
examples it is isomorphic to the original $\Tope(\A)$. 

\begin{example}
\label{ex:nonHeav}
Let $\A=\{H_1, H_2, \dots, H_6\}$ be the 
$A_3$-arrangement in $\R^3$ 
defined by $xyz(x-y)(x-z)(y-z)=0$. The characteristic polynomial is 
$\chi(\A, t)=(t-1)(t-2)(t-3)$, and $\#\ch(\A)=24$. 
We give numberings of chambers $1, 2, \dots, 12, 1', 2', \dots, 12'$ 
as in Figure \ref{fig:A3chambers}. 
\begin{figure}[htbp]
\centering
\begin{tikzpicture}[scale=0.95]

\coordinate (P1) at (2,2); 
\coordinate (P2) at (9,2); 

\draw [very thick] ($(P1)+(210:3.5)$) -- ($(P1)+(30:2.5)$) node[right] {$H_2$} ;
\draw [very thick] ($(P1)+(270:2.5)$) -- ($(P1)+(90:3.5)$) node[above] {$H_4$} ;
\draw [very thick] ($(P1)+(330:3.5)$) -- ($(P1)+(150:2.5)$) node[left] {$H_6$} ;
\draw [very thick] ($(P1)+(30:1)+(120:3)$) node[left] {$H_5$}  -- ($(P1)+(30:1)+(300:3)$);
\draw [very thick] ($(P1)+(150:1)+(240:3)$) -- ($(P1)+(150:1)+(60:3)$) node[right] {$H_3$} ;
\draw [very thick] ($(P1)+(270:1)+(0:3)$) node[right] {$H_1$} -- ($(P1)+(270:1)+(180:3)$);

\draw [->, thick] ($(P1)+(270:1)+(0:2.8)$)  -- ++(270:0.45);
\draw [->, thick] ($(P1)+(30:2.3)$) -- ++(300:0.5);
\draw [->, thick] ($(P1)+(150:1)+(60:2.7)$) -- ++(330:0.5);
\draw [->, thick] ($(P1)+(90:3.2)$) -- ++(0:0.5) ;
\draw [->, thick] ($(P1)+(30:1)+(120:2.9)$) -- ++(30:0.5);
\draw [->, thick] ($(P1)+(150:2.2)$) -- ++ (60:0.5);

\draw [very thick] ($(P2)+(210:3.5)$) -- ($(P2)+(30:2.5)$)  ;
\draw [very thick] ($(P2)+(270:2.5)$) -- ($(P2)+(90:3.5)$)  ;
\draw [very thick] ($(P2)+(330:3.5)$) -- ($(P2)+(150:2.5)$)  ;
\draw [very thick] ($(P2)+(30:1)+(120:3)$)  -- ($(P2)+(30:1)+(300:3)$);
\draw [very thick] ($(P2)+(150:1)+(240:3)$) -- ($(P2)+(150:1)+(60:3)$);
\draw [very thick] ($(P2)+(270:1)+(0:3)$)  -- ($(P2)+(270:1)+(180:3)$);
\draw ($(P2)+(2,0)$) node {$1$};
\draw ($(P2)+(1,2)$) node {$2$};
\draw ($(P2)+(0.3,3)$) node {$3$};
\draw ($(P2)+(-0.3,3)$) node {$4$};
\draw ($(P2)+(-1,2)$) node {$5$};
\draw ($(P2)+(-2,0)$) node {$6$};

\draw ($(P2)+(-3,-1.35)$) node {$1'$};
\draw ($(P2)+(-2.7,-2)$) node {$2'$};
\draw ($(P2)+(-1,-2)$) node {$3'$};
\draw ($(P2)+(1,-2)$) node {$4'$};
\draw ($(P2)+(2.7,-2)$) node {$5'$};
\draw ($(P2)+(3,-1.35)$) node {$6'$};

\draw ($(P2)+(0.8,0)$) node {$7$};
\draw ($(P2)+(0.4,0.6)$) node {$8$};
\draw ($(P2)+(0.4,-0.6)$) node {$12$};
\draw ($(P2)+(-0.8,0)$) node {$10$};
\draw ($(P2)+(-0.4,0.6)$) node {$9$};
\draw ($(P2)+(-0.4,-0.6)$) node {$11$};

\end{tikzpicture}
\caption{$\A=A_3$ and numbering of chambers (the chamber $i'$ is the opposite chamber of the chamber $i$). The remaining chambers are 
$7', 8', 9', 10', 11', 12'$.}
\label{fig:A3chambers}
\end{figure}
Let $x_1, \dots, x_6$ be the Heaviside functions and define 
generalized Heaviside functions $y_1, y_2, y_3, y_4$ as 
in Figure \ref{fig:A3gHeav}. 
\begin{figure}[htbp]
\centering
\begin{tikzpicture}[scale=0.95]

\coordinate (P2) at (9,4); 

\coordinate(Q1) at (0,0);
\coordinate(Q2) at (4,0);
\coordinate(Q3) at (8,0);
\coordinate(Q4) at (0,-4);
\coordinate(Q5) at (4,-4);
\coordinate(Q6) at (8,-4);
\coordinate(Q7) at (0,-8);
\coordinate(Q8) at (4,-8);
\coordinate(Q9) at (8,-8);
\coordinate(Q10) at (0,-12);

\draw [very thin] ($(Q1)+(210:1.75)$) -- ($(Q1)+(30:1.25)$)  ;
\draw [very thin] ($(Q1)+(270:1.2)$) node[below] {$x_1$} -- ($(Q1)+(90:1.75)$)  ;
\draw [very thin] ($(Q1)+(330:1.75)$) -- ($(Q1)+(150:1.25)$)  ;
\draw [very thin] ($(Q1)+(30:0.5)+(120:1.5)$)  -- ($(Q1)+(30:0.5)+(300:1.5)$);
\draw [very thin] ($(Q1)+(150:0.5)+(240:1.5)$) -- ($(Q1)+(150:0.5)+(60:1.5)$);
\draw [very thick] ($(Q1)+(270:0.5)+(0:1.5)$)  -- ($(Q1)+(270:0.5)+(180:1.5)$);
\draw ($(Q1)+(1,0)$) node {\footnotesize $0$};
\draw ($(Q1)+(0.5,1)$) node {\footnotesize $0$};
\draw ($(Q1)+(0.15,1.5)$) node {\footnotesize $0$};
\draw ($(Q1)+(-0.15,1.5)$) node {\footnotesize $0$};
\draw ($(Q1)+(-0.5,1)$) node {\footnotesize $0$};
\draw ($(Q1)+(-1,0)$) node {\footnotesize $0$};

\draw ($(Q1)+(-1.5,-0.675)$) node {\footnotesize $1$};
\draw ($(Q1)+(-1.35,-1)$) node {\footnotesize $1$};
\draw ($(Q1)+(-0.5,-1)$) node {\footnotesize $1$};
\draw ($(Q1)+(0.5,-1)$) node {\footnotesize $1$};
\draw ($(Q1)+(1.35,-1)$) node {\footnotesize $1$};
\draw ($(Q1)+(1.5,-0.675)$) node {\footnotesize $1$};

\draw ($(Q1)+(0.4,0)$) node {\footnotesize $0$};
\draw ($(Q1)+(0.2,0.3)$) node {\footnotesize $0$};
\draw ($(Q1)+(0.2,-0.3)$) node {\footnotesize $0$};
\draw ($(Q1)+(-0.4,0)$) node {\footnotesize $0$};
\draw ($(Q1)+(-0.2,0.3)$) node {\footnotesize $0$};
\draw ($(Q1)+(-0.2,-0.3)$) node {\footnotesize $0$};

\draw [very thick] ($(Q2)+(210:1.75)$) -- ($(Q2)+(30:1.25)$)  ;
\draw [very thin] ($(Q2)+(270:1.2)$) node[below] {$x_2$} -- ($(Q2)+(90:1.75)$)  ;
\draw [very thin] ($(Q2)+(330:1.75)$) -- ($(Q2)+(150:1.25)$)  ;
\draw [very thin] ($(Q2)+(30:0.5)+(120:1.5)$)  -- ($(Q2)+(30:0.5)+(300:1.5)$);
\draw [very thin] ($(Q2)+(150:0.5)+(240:1.5)$) -- ($(Q2)+(150:0.5)+(60:1.5)$);
\draw [very thin] ($(Q2)+(270:0.5)+(0:1.5)$)  -- ($(Q2)+(270:0.5)+(180:1.5)$);
\draw ($(Q2)+(1,0)$) node {\footnotesize $1$};
\draw ($(Q2)+(0.5,1)$) node {\footnotesize $0$};
\draw ($(Q2)+(0.15,1.5)$) node {\footnotesize $0$};
\draw ($(Q2)+(-0.15,1.5)$) node {\footnotesize $0$};
\draw ($(Q2)+(-0.5,1)$) node {\footnotesize $0$};
\draw ($(Q2)+(-1,0)$) node {\footnotesize $0$};

\draw ($(Q2)+(-1.5,-0.675)$) node {\footnotesize $0$};
\draw ($(Q2)+(-1.35,-1)$) node {\footnotesize $1$};
\draw ($(Q2)+(-0.5,-1)$) node {\footnotesize $1$};
\draw ($(Q2)+(0.5,-1)$) node {\footnotesize $1$};
\draw ($(Q2)+(1.35,-1)$) node {\footnotesize $1$};
\draw ($(Q2)+(1.5,-0.675)$) node {\footnotesize $1$};

\draw ($(Q2)+(0.4,0)$) node {\footnotesize $1$};
\draw ($(Q2)+(0.2,0.3)$) node {\footnotesize $0$};
\draw ($(Q2)+(0.2,-0.3)$) node {\footnotesize $1$};
\draw ($(Q2)+(-0.4,0)$) node {\footnotesize $0$};
\draw ($(Q2)+(-0.2,0.3)$) node {\footnotesize $0$};
\draw ($(Q2)+(-0.2,-0.3)$) node {\footnotesize $1$};

\draw [very thin] ($(Q3)+(210:1.75)$) -- ($(Q3)+(30:1.25)$)  ;
\draw [very thin] ($(Q3)+(270:1.2)$) node[below] {$x_3$} -- ($(Q3)+(90:1.75)$)  ;
\draw [very thin] ($(Q3)+(330:1.75)$) -- ($(Q3)+(150:1.25)$)  ;
\draw [very thin] ($(Q3)+(30:0.5)+(120:1.5)$)  -- ($(Q3)+(30:0.5)+(300:1.5)$);
\draw [very thick] ($(Q3)+(150:0.5)+(240:1.5)$) -- ($(Q3)+(150:0.5)+(60:1.5)$);
\draw [very thin] ($(Q3)+(270:0.5)+(0:1.5)$)  -- ($(Q3)+(270:0.5)+(180:1.5)$);
\draw ($(Q3)+(1,0)$) node {\footnotesize $1$};
\draw ($(Q3)+(0.5,1)$) node {\footnotesize $1$};
\draw ($(Q3)+(0.15,1.5)$) node {\footnotesize $0$};
\draw ($(Q3)+(-0.15,1.5)$) node {\footnotesize $0$};
\draw ($(Q3)+(-0.5,1)$) node {\footnotesize $0$};
\draw ($(Q3)+(-1,0)$) node {\footnotesize $0$};

\draw ($(Q3)+(-1.5,-0.675)$) node {\footnotesize $0$};
\draw ($(Q3)+(-1.35,-1)$) node {\footnotesize $0$};
\draw ($(Q3)+(-0.5,-1)$) node {\footnotesize $1$};
\draw ($(Q3)+(0.5,-1)$) node {\footnotesize $1$};
\draw ($(Q3)+(1.35,-1)$) node {\footnotesize $1$};
\draw ($(Q3)+(1.5,-0.675)$) node {\footnotesize $1$};

\draw ($(Q3)+(0.4,0)$) node {\footnotesize $1$};
\draw ($(Q3)+(0.2,0.3)$) node {\footnotesize $1$};
\draw ($(Q3)+(0.2,-0.3)$) node {\footnotesize $1$};
\draw ($(Q3)+(-0.4,0)$) node {\footnotesize $1$};
\draw ($(Q3)+(-0.2,0.3)$) node {\footnotesize $1$};
\draw ($(Q3)+(-0.2,-0.3)$) node {\footnotesize $1$};

\draw [very thin] ($(Q4)+(210:1.75)$) -- ($(Q4)+(30:1.25)$)  ;
\draw [very thick] ($(Q4)+(270:1.2)$) node[below] {$x_4$} -- ($(Q4)+(90:1.75)$)  ;
\draw [very thin] ($(Q4)+(330:1.75)$) -- ($(Q4)+(150:1.25)$)  ;
\draw [very thin] ($(Q4)+(30:0.5)+(120:1.5)$)  -- ($(Q4)+(30:0.5)+(300:1.5)$);
\draw [very thin] ($(Q4)+(150:0.5)+(240:1.5)$) -- ($(Q4)+(150:0.5)+(60:1.5)$);
\draw [very thin] ($(Q4)+(270:0.5)+(0:1.5)$)  -- ($(Q4)+(270:0.5)+(180:1.5)$);
\draw ($(Q4)+(1,0)$) node {\footnotesize $1$};
\draw ($(Q4)+(0.5,1)$) node {\footnotesize $1$};
\draw ($(Q4)+(0.15,1.5)$) node {\footnotesize $1$};
\draw ($(Q4)+(-0.15,1.5)$) node {\footnotesize $0$};
\draw ($(Q4)+(-0.5,1)$) node {\footnotesize $0$};
\draw ($(Q4)+(-1,0)$) node {\footnotesize $0$};

\draw ($(Q4)+(-1.5,-0.675)$) node {\footnotesize $0$};
\draw ($(Q4)+(-1.35,-1)$) node {\footnotesize $0$};
\draw ($(Q4)+(-0.5,-1)$) node {\footnotesize $0$};
\draw ($(Q4)+(0.5,-1)$) node {\footnotesize $1$};
\draw ($(Q4)+(1.35,-1)$) node {\footnotesize $1$};
\draw ($(Q4)+(1.5,-0.675)$) node {\footnotesize $1$};

\draw ($(Q4)+(0.4,0)$) node {\footnotesize $1$};
\draw ($(Q4)+(0.2,0.3)$) node {\footnotesize $1$};
\draw ($(Q4)+(0.2,-0.3)$) node {\footnotesize $1$};
\draw ($(Q4)+(-0.4,0)$) node {\footnotesize $0$};
\draw ($(Q4)+(-0.2,0.3)$) node {\footnotesize $0$};
\draw ($(Q4)+(-0.2,-0.3)$) node {\footnotesize $0$};

\draw [very thin] ($(Q5)+(210:1.75)$) -- ($(Q5)+(30:1.25)$)  ;
\draw [very thin] ($(Q5)+(270:1.2)$) node[below] {$x_5$} -- ($(Q5)+(90:1.75)$)  ;
\draw [very thin] ($(Q5)+(330:1.75)$) -- ($(Q5)+(150:1.25)$)  ;
\draw [very thick] ($(Q5)+(30:0.5)+(120:1.5)$)  -- ($(Q5)+(30:0.5)+(300:1.5)$);
\draw [very thin] ($(Q5)+(150:0.5)+(240:1.5)$) -- ($(Q5)+(150:0.5)+(60:1.5)$);
\draw [very thin] ($(Q5)+(270:0.5)+(0:1.5)$)  -- ($(Q5)+(270:0.5)+(180:1.5)$);
\draw ($(Q5)+(1,0)$) node {\footnotesize $1$};
\draw ($(Q5)+(0.5,1)$) node {\footnotesize $1$};
\draw ($(Q5)+(0.15,1.5)$) node {\footnotesize $1$};
\draw ($(Q5)+(-0.15,1.5)$) node {\footnotesize $1$};
\draw ($(Q5)+(-0.5,1)$) node {\footnotesize $0$};
\draw ($(Q5)+(-1,0)$) node {\footnotesize $0$};

\draw ($(Q5)+(-1.5,-0.675)$) node {\footnotesize $0$};
\draw ($(Q5)+(-1.35,-1)$) node {\footnotesize $0$};
\draw ($(Q5)+(-0.5,-1)$) node {\footnotesize $0$};
\draw ($(Q5)+(0.5,-1)$) node {\footnotesize $0$};
\draw ($(Q5)+(1.35,-1)$) node {\footnotesize $1$};
\draw ($(Q5)+(1.5,-0.675)$) node {\footnotesize $1$};

\draw ($(Q5)+(0.4,0)$) node {\footnotesize $0$};
\draw ($(Q5)+(0.2,0.3)$) node {\footnotesize $0$};
\draw ($(Q5)+(0.2,-0.3)$) node {\footnotesize $0$};
\draw ($(Q5)+(-0.4,0)$) node {\footnotesize $0$};
\draw ($(Q5)+(-0.2,0.3)$) node {\footnotesize $0$};
\draw ($(Q5)+(-0.2,-0.3)$) node {\footnotesize $0$};

\draw [very thin] ($(Q6)+(210:1.75)$) -- ($(Q6)+(30:1.25)$)  ;
\draw [very thin] ($(Q6)+(270:1.2)$) node[below] {$x_6$} -- ($(Q6)+(90:1.75)$)  ;
\draw [very thick] ($(Q6)+(330:1.75)$) -- ($(Q6)+(150:1.25)$)  ;
\draw [very thin] ($(Q6)+(30:0.5)+(120:1.5)$)  -- ($(Q6)+(30:0.5)+(300:1.5)$);
\draw [very thin] ($(Q6)+(150:0.5)+(240:1.5)$) -- ($(Q6)+(150:0.5)+(60:1.5)$);
\draw [very thin] ($(Q6)+(270:0.5)+(0:1.5)$)  -- ($(Q6)+(270:0.5)+(180:1.5)$);
\draw ($(Q6)+(1,0)$) node {\footnotesize $1$};
\draw ($(Q6)+(0.5,1)$) node {\footnotesize $1$};
\draw ($(Q6)+(0.15,1.5)$) node {\footnotesize $1$};
\draw ($(Q6)+(-0.15,1.5)$) node {\footnotesize $1$};
\draw ($(Q6)+(-0.5,1)$) node {\footnotesize $1$};
\draw ($(Q6)+(-1,0)$) node {\footnotesize $0$};

\draw ($(Q6)+(-1.5,-0.675)$) node {\footnotesize $0$};
\draw ($(Q6)+(-1.35,-1)$) node {\footnotesize $0$};
\draw ($(Q6)+(-0.5,-1)$) node {\footnotesize $0$};
\draw ($(Q6)+(0.5,-1)$) node {\footnotesize $0$};
\draw ($(Q6)+(1.35,-1)$) node {\footnotesize $0$};
\draw ($(Q6)+(1.5,-0.675)$) node {\footnotesize $1$};

\draw ($(Q6)+(0.4,0)$) node {\footnotesize $1$};
\draw ($(Q6)+(0.2,0.3)$) node {\footnotesize $1$};
\draw ($(Q6)+(0.2,-0.3)$) node {\footnotesize $0$};
\draw ($(Q6)+(-0.4,0)$) node {\footnotesize $0$};
\draw ($(Q6)+(-0.2,0.3)$) node {\footnotesize $1$};
\draw ($(Q6)+(-0.2,-0.3)$) node {\footnotesize $0$};

\draw [very thick] ($(Q7)+(210:1.75)$) -- ($(Q7)+(30:1.25)$)  ;
\draw [very thin] ($(Q7)+(270:1.2)$) node[below] {$y_1$} -- ($(Q7)+(90:1.75)$)  ;
\draw [very thin] ($(Q7)+(330:1.75)$) -- ($(Q7)+(150:1.25)$)  ;
\draw [very thin] ($(Q7)+(30:0.5)+(120:1.5)$)  -- ($(Q7)+(30:0.5)+(300:1.5)$);
\draw [very thick] ($(Q7)+(150:0.5)+(240:1.5)$) -- ($(Q7)+(150:0.5)+(60:1.5)$);
\draw [very thick] ($(Q7)+(270:0.5)+(0:1.5)$)  -- ($(Q7)+(270:0.5)+(180:1.5)$);
\draw ($(Q7)+(1,0)$) node {\footnotesize $1$};
\draw ($(Q7)+(0.5,1)$) node {\footnotesize $0$};
\draw ($(Q7)+(0.15,1.5)$) node {\footnotesize $1$};
\draw ($(Q7)+(-0.15,1.5)$) node {\footnotesize $1$};
\draw ($(Q7)+(-0.5,1)$) node {\footnotesize $1$};
\draw ($(Q7)+(-1,0)$) node {\footnotesize $1$};

\draw ($(Q7)+(-1.5,-0.675)$) node {\footnotesize $0$};
\draw ($(Q7)+(-1.35,-1)$) node {\footnotesize $1$};
\draw ($(Q7)+(-0.5,-1)$) node {\footnotesize $0$};
\draw ($(Q7)+(0.5,-1)$) node {\footnotesize $0$};
\draw ($(Q7)+(1.35,-1)$) node {\footnotesize $0$};
\draw ($(Q7)+(1.5,-0.675)$) node {\footnotesize $0$};

\draw ($(Q7)+(0.4,0)$) node {\footnotesize $1$};
\draw ($(Q7)+(0.2,0.3)$) node {\footnotesize $0$};
\draw ($(Q7)+(0.2,-0.3)$) node {\footnotesize $1$};
\draw ($(Q7)+(-0.4,0)$) node {\footnotesize $0$};
\draw ($(Q7)+(-0.2,0.3)$) node {\footnotesize $0$};
\draw ($(Q7)+(-0.2,-0.3)$) node {\footnotesize $1$};

\draw [very thin] ($(Q8)+(210:1.75)$) -- ($(Q8)+(30:1.25)$)  ;
\draw [very thick] ($(Q8)+(270:1.2)$) node[below] {$y_2$} -- ($(Q8)+(90:1.75)$)  ;
\draw [very thin] ($(Q8)+(330:1.75)$) -- ($(Q8)+(150:1.25)$)  ;
\draw [very thick] ($(Q8)+(30:0.5)+(120:1.5)$)  -- ($(Q8)+(30:0.5)+(300:1.5)$);
\draw [very thick] ($(Q8)+(150:0.5)+(240:1.5)$) -- ($(Q8)+(150:0.5)+(60:1.5)$);
\draw [very thin] ($(Q8)+(270:0.5)+(0:1.5)$)  -- ($(Q8)+(270:0.5)+(180:1.5)$);
\draw ($(Q8)+(1,0)$) node {\footnotesize $1$};
\draw ($(Q8)+(0.5,1)$) node {\footnotesize $1$};
\draw ($(Q8)+(0.15,1.5)$) node {\footnotesize $0$};
\draw ($(Q8)+(-0.15,1.5)$) node {\footnotesize $1$};
\draw ($(Q8)+(-0.5,1)$) node {\footnotesize $0$};
\draw ($(Q8)+(-1,0)$) node {\footnotesize $0$};

\draw ($(Q8)+(-1.5,-0.675)$) node {\footnotesize $0$};
\draw ($(Q8)+(-1.35,-1)$) node {\footnotesize $0$};
\draw ($(Q8)+(-0.5,-1)$) node {\footnotesize $1$};
\draw ($(Q8)+(0.5,-1)$) node {\footnotesize $0$};
\draw ($(Q8)+(1.35,-1)$) node {\footnotesize $1$};
\draw ($(Q8)+(1.5,-0.675)$) node {\footnotesize $1$};

\draw ($(Q8)+(0.4,0)$) node {\footnotesize $0$};
\draw ($(Q8)+(0.2,0.3)$) node {\footnotesize $0$};
\draw ($(Q8)+(0.2,-0.3)$) node {\footnotesize $0$};
\draw ($(Q8)+(-0.4,0)$) node {\footnotesize $1$};
\draw ($(Q8)+(-0.2,0.3)$) node {\footnotesize $1$};
\draw ($(Q8)+(-0.2,-0.3)$) node {\footnotesize $1$};

\draw [very thin] ($(Q9)+(210:1.75)$) -- ($(Q9)+(30:1.25)$)  ;
\draw [very thin] ($(Q9)+(270:1.2)$) node[below] {$y_3$} -- ($(Q9)+(90:1.75)$)  ;
\draw [very thick] ($(Q9)+(330:1.75)$) -- ($(Q9)+(150:1.25)$)  ;
\draw [very thick] ($(Q9)+(30:0.5)+(120:1.5)$)  -- ($(Q9)+(30:0.5)+(300:1.5)$);
\draw [very thin] ($(Q9)+(150:0.5)+(240:1.5)$) -- ($(Q9)+(150:0.5)+(60:1.5)$);
\draw [very thick] ($(Q9)+(270:0.5)+(0:1.5)$)  -- ($(Q9)+(270:0.5)+(180:1.5)$);
\draw ($(Q9)+(1,0)$) node {\footnotesize $0$};
\draw ($(Q9)+(0.5,1)$) node {\footnotesize $0$};
\draw ($(Q9)+(0.15,1.5)$) node {\footnotesize $0$};
\draw ($(Q9)+(-0.15,1.5)$) node {\footnotesize $0$};
\draw ($(Q9)+(-0.5,1)$) node {\footnotesize $1$};
\draw ($(Q9)+(-1,0)$) node {\footnotesize $0$};

\draw ($(Q9)+(-1.5,-0.675)$) node {\footnotesize $1$};
\draw ($(Q9)+(-1.35,-1)$) node {\footnotesize $1$};
\draw ($(Q9)+(-0.5,-1)$) node {\footnotesize $1$};
\draw ($(Q9)+(0.5,-1)$) node {\footnotesize $1$};
\draw ($(Q9)+(1.35,-1)$) node {\footnotesize $0$};
\draw ($(Q9)+(1.5,-0.675)$) node {\footnotesize $1$};

\draw ($(Q9)+(0.4,0)$) node {\footnotesize $1$};
\draw ($(Q9)+(0.2,0.3)$) node {\footnotesize $1$};
\draw ($(Q9)+(0.2,-0.3)$) node {\footnotesize $0$};
\draw ($(Q9)+(-0.4,0)$) node {\footnotesize $0$};
\draw ($(Q9)+(-0.2,0.3)$) node {\footnotesize $1$};
\draw ($(Q9)+(-0.2,-0.3)$) node {\footnotesize $0$};

\draw [very thick] ($(Q10)+(210:1.75)$) -- ($(Q10)+(30:1.25)$)  ;
\draw [very thick] ($(Q10)+(270:1.2)$) node[below] {$y_4$} -- ($(Q10)+(90:1.75)$)  ;
\draw [very thick] ($(Q10)+(330:1.75)$) -- ($(Q10)+(150:1.25)$)  ;
\draw [very thin] ($(Q10)+(30:0.5)+(120:1.5)$)  -- ($(Q10)+(30:0.5)+(300:1.5)$);
\draw [very thin] ($(Q10)+(150:0.5)+(240:1.5)$) -- ($(Q10)+(150:0.5)+(60:1.5)$);
\draw [very thin] ($(Q10)+(270:0.5)+(0:1.5)$)  -- ($(Q10)+(270:0.5)+(180:1.5)$);
\draw ($(Q10)+(1,0)$) node {\footnotesize $0$};
\draw ($(Q10)+(0.5,1)$) node {\footnotesize $1$};
\draw ($(Q10)+(0.15,1.5)$) node {\footnotesize $1$};
\draw ($(Q10)+(-0.15,1.5)$) node {\footnotesize $0$};
\draw ($(Q10)+(-0.5,1)$) node {\footnotesize $0$};
\draw ($(Q10)+(-1,0)$) node {\footnotesize $1$};

\draw ($(Q10)+(-1.5,-0.675)$) node {\footnotesize $1$};
\draw ($(Q10)+(-1.35,-1)$) node {\footnotesize $0$};
\draw ($(Q10)+(-0.5,-1)$) node {\footnotesize $0$};
\draw ($(Q10)+(0.5,-1)$) node {\footnotesize $1$};
\draw ($(Q10)+(1.35,-1)$) node {\footnotesize $1$};
\draw ($(Q10)+(1.5,-0.675)$) node {\footnotesize $0$};

\draw ($(Q10)+(0.4,0)$) node {\footnotesize $0$};
\draw ($(Q10)+(0.2,0.3)$) node {\footnotesize $1$};
\draw ($(Q10)+(0.2,-0.3)$) node {\footnotesize $1$};
\draw ($(Q10)+(-0.4,0)$) node {\footnotesize $1$};
\draw ($(Q10)+(-0.2,0.3)$) node {\footnotesize $0$};
\draw ($(Q10)+(-0.2,-0.3)$) node {\footnotesize $0$};

\draw (3,-11.5) node[right] {$y_1=1-x_1+x_2-x_3$}; 
\draw (3,-12) node[right] {$y_2=x_3-x_4+x_5$}; 
\draw (3,-12.5) node[right] {$y_3=x_1-x_5+x_6$}; 
\draw (3,-13) node[right] {$y_4=1-x_2+x_4-x_6=2-(y_1+y_2+y_3)$}; 

\end{tikzpicture}
\caption{Heaviside functions $x_1, \dots, x_6$ and 
generalized Heaviside functions $y_1, \dots, y_4$.}
\label{fig:A3gHeav}
\end{figure}
For a chamber $C_i$, 
the product $x_p\cdot 1_{C_i}$ (or $y_p\cdot 1_{C_i}$) equals 
either $1_{C_i}$ or $0$; 
the full product table appears in Figure \ref{fig:prodtable}. 
(where, we put $1$ if the product is $1_{C_i}$). 
\begin{figure}
{\tiny 
\begin{tabular}{c|cccccccccccccccccccccccc}
&
$1$&$2$&$3$&$4$&$5$&$6$&
$1'$&$2'$&$3'$&$4'$&$5'$&$6'$&
$7$&$8$&$9$&$10$&$11$&$12$&
$7'$&$8'$&$9'$&$10'$&$11'$&$12'$\\
\hline
$x_1$&
$0$&$0$&$0$&$0$&$0$&$0$&
$1$&$1$&$1$&$1$&$1$&$1$&
$0$&$0$&$0$&$0$&$0$&$0$&
$1$&$1$&$1$&$1$&$1$&$1$\\
$x_2$&
$1$&$0$&$0$&$0$&$0$&$0$&
$0$&$1$&$1$&$1$&$1$&$1$&
$1$&$0$&$0$&$0$&$1$&$1$&
$0$&$1$&$1$&$1$&$0$&$0$\\
$x_3$&
$1$&$1$&$0$&$0$&$0$&$0$&
$0$&$0$&$1$&$1$&$1$&$1$&
$1$&$1$&$1$&$1$&$1$&$1$&
$0$&$0$&$0$&$0$&$0$&$0$\\
$x_4$&
$1$&$1$&$1$&$0$&$0$&$0$&
$0$&$0$&$0$&$1$&$1$&$1$&
$1$&$1$&$0$&$0$&$0$&$1$&
$0$&$0$&$1$&$1$&$1$&$0$\\
$x_5$&
$1$&$1$&$1$&$1$&$0$&$0$&
$0$&$0$&$0$&$0$&$1$&$1$&
$0$&$0$&$0$&$0$&$0$&$0$&
$1$&$1$&$1$&$1$&$1$&$1$\\
$x_6$&
$1$&$1$&$1$&$1$&$1$&$0$&
$0$&$0$&$0$&$0$&$0$&$1$&
$1$&$1$&$1$&$0$&$0$&$0$&
$0$&$0$&$0$&$1$&$1$&$1$\\
$y_1$&
$1$&$0$&$1$&$1$&$1$&$1$&
$0$&$1$&$0$&$0$&$0$&$0$&
$1$&$0$&$0$&$0$&$1$&$1$&
$0$&$1$&$1$&$1$&$0$&$0$\\
$y_2$&
$1$&$1$&$0$&$1$&$0$&$0$&
$0$&$0$&$1$&$0$&$1$&$1$&
$0$&$0$&$1$&$1$&$1$&$0$&
$1$&$1$&$0$&$0$&$0$&$1$\\
$y_3$&
$0$&$0$&$0$&$0$&$1$&$0$&
$1$&$1$&$1$&$1$&$0$&$1$&
$1$&$1$&$1$&$0$&$0$&$0$&
$0$&$0$&$0$&$1$&$1$&$1$\\
$y_4$&
$0$&$1$&$1$&$0$&$0$&$1$&
$1$&$0$&$0$&$1$&$1$&$0$&
$0$&$1$&$0$&$1$&$0$&$1$&
$1$&$0$&$1$&$0$&$1$&$0$
\end{tabular}
}
\caption{Product table between generalized Heaviside functions and 
$1_{C_i}$.}
\label{fig:prodtable}
\end{figure}
In Figure \ref{fig:genTopegraphs}, 
we depict several generalized tope graphs. 
First, $\gTope(x_1, x_2, x_3, x_4, x_5, x_6)$ 
is the original tope graph $\Tope(\A)$. 
Although $y_1, y_2, y_3$ are not Heaviside functions, 
$\gTope(x_1, x_3, x_5, y_1, y_2, y_3)$ is a tope graph 
of an oriented matroid and is isomorphic to $\Tope(\A)$. 
On the other hand, $\gTope(y_1, x_2, x_3, x_4, x_5, x_6)$ and 
$\gTope(x_1, x_2, x_4, y_1, y_2, y_3)$ are not a tope graphs 
of oriented matroids, because they have vertices of degree $2$.

\begin{figure}[htbp]
\centering
\begin{tikzpicture}[scale=0.9]

\coordinate (P0) at (0,0); 
\coordinate (P1) at ($(P0)+(5.5,3)$); 
\coordinate (P2) at ($(P0)+(4.5,5)$); 
\coordinate (P3) at ($(P0)+(4,6)$); 
\coordinate (P4) at ($(P0)+(3,6)$); 
\coordinate (P5) at ($(P0)+(2.5,5)$); 
\coordinate (P6) at ($(P0)+(1.5,3)$); 
\coordinate (P7) at ($(P0)+(1,2)$); 
\coordinate (P8) at ($(P0)+(1.5,1)$); 
\coordinate (P9) at ($(P0)+(2.5,1)$); 
\coordinate (P10) at ($(P0)+(4.5,1)$); 
\coordinate (P11) at ($(P0)+(5.5,1)$); 
\coordinate (P12) at ($(P0)+(6,2)$); 
\coordinate (P13) at ($(P0)+(4.5,3)$); 
\coordinate (P14) at ($(P0)+(4,4)$); 
\coordinate (P15) at ($(P0)+(3,4)$); 
\coordinate (P16) at ($(P0)+(2.5,3)$); 
\coordinate (P17) at ($(P0)+(3,2)$); 
\coordinate (P18) at ($(P0)+(4,2)$); 
\coordinate (P19) at ($(P0)+(0,2)$); 
\coordinate (P20) at ($(P0)+(1,0)$); 
\coordinate (P21) at ($(P0)+(6,0)$); 
\coordinate (P22) at ($(P0)+(7,2)$); 
\coordinate (P23) at ($(P0)+(4.5,7)$); 
\coordinate (P24) at ($(P0)+(2.5,7)$); 

\draw [thick] (P1)--(P2)--(P3)--(P4)--(P5)--(P6)--(P7)--(P8)--(P9)--(P10)--(P11)--(P12)--cycle;
\draw [thick] (P13)--(P14)--(P15)--(P16)--(P17)--(P18)--cycle;
\draw [thick] (P19)--(P20)-- node[below] {$\gTope(x_1, x_2, x_3, x_4, x_5, x_6)$} (P21)--(P22)--(P23)--(P24)--cycle;

\draw[thick] (P1)--(P13);
\draw[thick] (P2)--(P14);
\draw[thick] (P5)--(P15);
\draw[thick] (P6)--(P16);
\draw[thick] (P9)--(P17);
\draw[thick] (P10)--(P18);

\draw[thick] (P3)--(P23);
\draw[thick] (P4)--(P24);
\draw[thick] (P7)--(P19);
\draw[thick] (P8)--(P20);
\draw[thick] (P11)--(P21);
\draw[thick] (P12)--(P22);

\filldraw[draw=black, fill=white] (P1) node {\small $1$} circle [radius=0.2]; 
\filldraw[draw=black, fill=white] (P2) node {\small $2$} circle [radius=0.2]; 
\filldraw[draw=black, fill=white] (P3) node {\small $3$} circle [radius=0.2]; 
\filldraw[draw=black, fill=white] (P4) node {\small $4$} circle [radius=0.2]; 
\filldraw[draw=black, fill=white] (P5) node {\small $5$} circle [radius=0.2]; 
\filldraw[draw=black, fill=white] (P6) node {\small $6$} circle [radius=0.2]; 
\filldraw[draw=black, fill=white] (P7) node {\small $1'$} circle [radius=0.2]; 
\filldraw[draw=black, fill=white] (P8) node {\small $2'$} circle [radius=0.2]; 
\filldraw[draw=black, fill=white] (P9) node {\small $3'$} circle [radius=0.2]; 
\filldraw[draw=black, fill=white] (P10) node {\small $4'$} circle [radius=0.2]; 
\filldraw[draw=black, fill=white] (P11) node {\small $5'$} circle [radius=0.2]; 
\filldraw[draw=black, fill=white] (P12) node {\small $6'$} circle [radius=0.2]; 
\filldraw[draw=black, fill=white] (P13) node {\small $7$} circle [radius=0.2]; 
\filldraw[draw=black, fill=white] (P14) node {\small $8$} circle [radius=0.2]; 
\filldraw[draw=black, fill=white] (P15) node {\small $9$} circle [radius=0.2]; 
\filldraw[draw=black, fill=white] (P16) node {\small $10$} circle [radius=0.2]; 
\filldraw[draw=black, fill=white] (P17) node {\small $11$} circle [radius=0.2]; 
\filldraw[draw=black, fill=white] (P18) node {\small $12$} circle [radius=0.2]; 
\filldraw[draw=black, fill=white] (P19) node {\small $7'$} circle [radius=0.2]; 
\filldraw[draw=black, fill=white] (P20) node {\small $8'$} circle [radius=0.2]; 
\filldraw[draw=black, fill=white] (P21) node {\small $9'$} circle [radius=0.2]; 
\filldraw[draw=black, fill=white] (P22) node {\footnotesize $10'$} circle [radius=0.2]; 
\filldraw[draw=black, fill=white] (P23) node {\footnotesize $11'$} circle [radius=0.2]; 
\filldraw[draw=black, fill=white] (P24) node {\footnotesize $12'$} circle [radius=0.2];

\coordinate (Q0) at (8,0); 
\coordinate (Q1) at ($(Q0)+(5.5,3)$); 
\coordinate (Q2) at ($(Q0)+(4.5,5)$); 
\coordinate (Q3) at ($(Q0)+(4,6)$); 
\coordinate (Q4) at ($(Q0)+(3,6)$); 
\coordinate (Q5) at ($(Q0)+(2.5,5)$); 
\coordinate (Q6) at ($(Q0)+(1.5,3)$); 
\coordinate (Q7) at ($(Q0)+(1,2)$); 
\coordinate (Q8) at ($(Q0)+(1.5,1)$); 
\coordinate (Q9) at ($(Q0)+(2.5,1)$); 
\coordinate (Q10) at ($(Q0)+(4.5,1)$); 
\coordinate (Q11) at ($(Q0)+(5.5,1)$); 
\coordinate (Q12) at ($(Q0)+(6,2)$); 
\coordinate (Q13) at ($(Q0)+(4.5,3)$); 
\coordinate (Q14) at ($(Q0)+(4,4)$); 
\coordinate (Q15) at ($(Q0)+(3,4)$); 
\coordinate (Q16) at ($(Q0)+(2.5,3)$); 
\coordinate (Q17) at ($(Q0)+(3,2)$); 
\coordinate (Q18) at ($(Q0)+(4,2)$); 
\coordinate (Q19) at ($(Q0)+(0,2)$); 
\coordinate (Q20) at ($(Q0)+(1,0)$); 
\coordinate (Q21) at ($(Q0)+(6,0)$); 
\coordinate (Q22) at ($(Q0)+(7,2)$); 
\coordinate (Q23) at ($(Q0)+(4.5,7)$); 
\coordinate (Q24) at ($(Q0)+(2.5,7)$); 

\draw [thick] (Q1)--(Q2)--(Q3)--(Q4)--(Q5)--(Q6)--(Q7)--(Q8)--(Q9)--(Q10)--(Q11)--(Q12)--cycle;
\draw [thick] (Q13)--(Q14)--(Q15)--(Q16)--(Q17)--(Q18)--cycle;
\draw [thick] (Q19)--(Q20)-- node[below] {$\gTope(x_1, x_3, x_5, y_1, y_2, y_3)$} (Q21)--(Q22)--(Q23)--(Q24)--cycle;

\draw[thick] (Q1)--(Q13);
\draw[thick] (Q2)--(Q14);
\draw[thick] (Q5)--(Q15);
\draw[thick] (Q6)--(Q16);
\draw[thick] (Q9)--(Q17);
\draw[thick] (Q10)--(Q18);

\draw[thick] (Q3)--(Q23);
\draw[thick] (Q4)--(Q24);
\draw[thick] (Q7)--(Q19);
\draw[thick] (Q8)--(Q20);
\draw[thick] (Q11)--(Q21);
\draw[thick] (Q12)--(Q22);

\filldraw[draw=black, fill=white] (Q1) node {\small $2$} circle [radius=0.2]; 
\filldraw[draw=black, fill=white] (Q2) node {\small $1$} circle [radius=0.2]; 
\filldraw[draw=black, fill=white] (Q3) node {\small $4$} circle [radius=0.2]; 
\filldraw[draw=black, fill=white] (Q4) node {\small $3$} circle [radius=0.2]; 
\filldraw[draw=black, fill=white] (Q5) node {\small $6$} circle [radius=0.2]; 
\filldraw[draw=black, fill=white] (Q6) node {\small $5$} circle [radius=0.2]; 
\filldraw[draw=black, fill=white] (Q7) node {\small $2'$} circle [radius=0.2]; 
\filldraw[draw=black, fill=white] (Q8) node {\small $1'$} circle [radius=0.2]; 
\filldraw[draw=black, fill=white] (Q9) node {\small $4'$} circle [radius=0.2]; 
\filldraw[draw=black, fill=white] (Q10) node {\small $3'$} circle [radius=0.2]; 
\filldraw[draw=black, fill=white] (Q11) node {\small $6'$} circle [radius=0.2]; 
\filldraw[draw=black, fill=white] (Q12) node {\small $5'$} circle [radius=0.2]; 
\filldraw[draw=black, fill=white] (Q13) node {\small $10$} circle [radius=0.2]; 
\filldraw[draw=black, fill=white] (Q14) node {\small $11$} circle [radius=0.2]; 
\filldraw[draw=black, fill=white] (Q15) node {\small $12$} circle [radius=0.2]; 
\filldraw[draw=black, fill=white] (Q16) node {\small $7$} circle [radius=0.2]; 
\filldraw[draw=black, fill=white] (Q17) node {\small $8$} circle [radius=0.2]; 
\filldraw[draw=black, fill=white] (Q18) node {\small $9$} circle [radius=0.2]; 
\filldraw[draw=black, fill=white] (Q19) node {\footnotesize $10'$} circle [radius=0.2]; 
\filldraw[draw=black, fill=white] (Q20) node {\footnotesize $11'$} circle [radius=0.2]; 
\filldraw[draw=black, fill=white] (Q21) node {\footnotesize $12'$} circle [radius=0.2]; 
\filldraw[draw=black, fill=white] (Q22) node {\small $7'$} circle [radius=0.2]; 
\filldraw[draw=black, fill=white] (Q23) node {\small $8'$} circle [radius=0.2]; 
\filldraw[draw=black, fill=white] (Q24) node {\small $9'$} circle [radius=0.2];

\coordinate (R0) at (0,-8.5); 
\coordinate (R1) at ($(R0)+(5.5,3)$); 
\coordinate (R2) at ($(R0)+(4.5,5)$); 
\coordinate (R3) at ($(R0)+(4,6)$); 
\coordinate (R4) at ($(R0)+(3,6)$); 
\coordinate (R5) at ($(R0)+(2.5,5)$); 
\coordinate (R6) at ($(R0)+(1.5,3)$); 
\coordinate (R7) at ($(R0)+(1,2)$); 
\coordinate (R8) at ($(R0)+(1.5,1)$); 
\coordinate (R9) at ($(R0)+(2.5,1)$); 
\coordinate (R10) at ($(R0)+(4.5,1)$); 
\coordinate (R11) at ($(R0)+(5.5,1)$); 
\coordinate (R12) at ($(R0)+(6,2)$); 
\coordinate (R13) at ($(R0)+(4.5,3)$); 
\coordinate (R14) at ($(R0)+(4,4)$); 
\coordinate (R15) at ($(R0)+(3,4)$); 
\coordinate (R16) at ($(R0)+(2.5,3)$); 
\coordinate (R17) at ($(R0)+(3,2)$); 
\coordinate (R18) at ($(R0)+(4,2)$); 
\coordinate (R19) at ($(R0)+(0,2)$); 
\coordinate (R20) at ($(R0)+(1,0)$); 
\coordinate (R21) at ($(R0)+(6,0)$); 
\coordinate (R22) at ($(R0)+(7,2)$); 
\coordinate (R23) at ($(R0)+(4.5,7)$); 
\coordinate (R24) at ($(R0)+(2.5,7)$); 

\draw [thick] (R1)--(R2)--(R3)--(R4)--(R5)--(R6)--(R7)--(R8)--(R9)--(R10)--(R11)--(R12)--cycle;
\draw [thick] (R13)--(R14)--(R15)--(R16)--(R17)--(R18)--cycle;
\draw [thick] (R19)--(R20)-- node[below] {$\gTope(y_1, x_2, x_3, x_4, x_5, x_6)$} (R21)--(R22)--(R23)--(R24)--cycle;

\draw[thick] (R2)--(R14);
\draw[thick] (R5)--(R15);
\draw[thick] (R9)--(R17);
\draw[thick] (R10)--(R18);

\draw[thick] (R3)--(R23);
\draw[thick] (R4)--(R24);
\draw[thick] (R8)--(R20);
\draw[thick] (R11)--(R21);

\filldraw[draw=black, fill=white] (R1) node {\small $8$} circle [radius=0.2]; 
\filldraw[draw=black, fill=white] (R2) node {\small $2$} circle [radius=0.2]; 
\filldraw[draw=black, fill=white] (R3) node {\footnotesize $11'$} circle [radius=0.2]; 
\filldraw[draw=black, fill=white] (R4) node {\small $3$} circle [radius=0.2]; 
\filldraw[draw=black, fill=white] (R5) node {\footnotesize $10'$} circle [radius=0.2]; 
\filldraw[draw=black, fill=white] (R6) node {\small $9'$} circle [radius=0.2]; 
\filldraw[draw=black, fill=white] (R7) node {\small $8'$} circle [radius=0.2]; 
\filldraw[draw=black, fill=white] (R8) node {\small $2'$} circle [radius=0.2]; 
\filldraw[draw=black, fill=white] (R9) node {\small $11$} circle [radius=0.2]; 
\filldraw[draw=black, fill=white] (R10) node {\small $3'$} circle [radius=0.2]; 
\filldraw[draw=black, fill=white] (R11) node {\small $10$} circle [radius=0.2]; 
\filldraw[draw=black, fill=white] (R12) node {\small $9$} circle [radius=0.2]; 
\filldraw[draw=black, fill=white] (R13) node {\small $5'$} circle [radius=0.2]; 
\filldraw[draw=black, fill=white] (R14) node {\small $6'$} circle [radius=0.2]; 
\filldraw[draw=black, fill=white] (R15) node {\small $1$} circle [radius=0.2]; 
\filldraw[draw=black, fill=white] (R16) node {\small $7$} circle [radius=0.2]; 
\filldraw[draw=black, fill=white] (R17) node {\small $12$} circle [radius=0.2]; 
\filldraw[draw=black, fill=white] (R18) node {\small $4'$} circle [radius=0.2]; 
\filldraw[draw=black, fill=white] (R19) node {\small $5$} circle [radius=0.2]; 
\filldraw[draw=black, fill=white] (R20) node {\small $6$} circle [radius=0.2]; 
\filldraw[draw=black, fill=white] (R21) node {\small $1'$} circle [radius=0.2]; 
\filldraw[draw=black, fill=white] (R22) node {\small $7'$} circle [radius=0.2]; 
\filldraw[draw=black, fill=white] (R23) node {\footnotesize $12'$} circle [radius=0.2]; 
\filldraw[draw=black, fill=white] (R24) node {\small $4$} circle [radius=0.2];

\coordinate (S0) at (11.5,-5); 
\coordinate (S1) at ($(S0)+(-1,2)$); 
\coordinate (S2) at ($(S0)+(-2,-3)$); 
\coordinate (S3) at ($(S0)+(0,1)$); 
\coordinate (S4) at ($(S0)+(-2,1)$); 
\coordinate (S5) at ($(S0)+(-1,0)$); 
\coordinate (S6) at ($(S0)+(-1,1)$); 
\coordinate (S7) at ($(S0)+(1,1)$); 
\coordinate (S8) at ($(S0)+(2,-3)$); 
\coordinate (S9) at ($(S0)+(-1,-3)$); 
\coordinate (S10) at ($(S0)+(-2,-2)$); 
\coordinate (S11) at ($(S0)+(-2,2)$); 
\coordinate (S12) at ($(S0)+(0,2)$); 

\coordinate (S01) at ($(S0)+(1,-2)$); 
\coordinate (S02) at ($(S0)+(2,3)$); 
\coordinate (S03) at ($(S0)+(0,-1)$); 
\coordinate (S04) at ($(S0)+(2,-1)$); 
\coordinate (S05) at ($(S0)+(1,0)$); 
\coordinate (S06) at ($(S0)+(1,-1)$); 
\coordinate (S07) at ($(S0)+(-1,-1)$); 
\coordinate (S08) at ($(S0)+(-2,3)$); 
\coordinate (S09) at ($(S0)+(1,3)$); 
\coordinate (S010) at ($(S0)+(2,2)$); 
\coordinate (S011) at ($(S0)+(2,-2)$); 
\coordinate (S012) at ($(S0)+(0,-2)$); 

\draw [thick] (S2)--(S08);
\draw [thick] (S10)--(S07)--(S012)--(S9)--cycle;
\draw [thick] (S012)--(S03)--(S04)--(S011)--cycle;
\draw [thick] (S05)--(S06);

\draw [thick] (S02)--(S8);
\draw [thick] (S010)--(S7)--(S12)--(S09)--cycle;
\draw [thick] (S12)--(S3)--(S4)--(S11)--cycle;
\draw [thick] (S5)--(S6);

\draw [thick] ($(S0)+(0,-3.5)$) node[below] {$\gTope(x_1, x_2, x_4, y_1, y_2, y_3)$};

\filldraw[draw=black, fill=white] (S1) node {\small $1$} circle [radius=0.2]; 
\filldraw[draw=black, fill=white] (S2) node {\small $2$} circle [radius=0.2]; 
\filldraw[draw=black, fill=white] (S3) node {\small $3$} circle [radius=0.2]; 
\filldraw[draw=black, fill=white] (S4) node {\small $4$} circle [radius=0.2]; 
\filldraw[draw=black, fill=white] (S5) node {\small $5$} circle [radius=0.2]; 
\filldraw[draw=black, fill=white] (S6) node {\small $6$} circle [radius=0.2]; 
\filldraw[draw=black, fill=white] (S7) node {\small $7$} circle [radius=0.2]; 
\filldraw[draw=black, fill=white] (S8) node {\small $8$} circle [radius=0.2]; 
\filldraw[draw=black, fill=white] (S9) node {\small $9$} circle [radius=0.2]; 
\filldraw[draw=black, fill=white] (S10) node {\small $10$} circle [radius=0.2]; 
\filldraw[draw=black, fill=white] (S11) node {\small $11$} circle [radius=0.2]; 
\filldraw[draw=black, fill=white] (S12) node {\small $12$} circle [radius=0.2]; 

\filldraw[draw=black, fill=white] (S01) node {\small $1'$} circle [radius=0.2]; 
\filldraw[draw=black, fill=white] (S02) node {\small $2'$} circle [radius=0.2]; 
\filldraw[draw=black, fill=white] (S03) node {\small $3'$} circle [radius=0.2]; 
\filldraw[draw=black, fill=white] (S04) node {\small $4'$} circle [radius=0.2]; 
\filldraw[draw=black, fill=white] (S05) node {\small $5'$} circle [radius=0.2]; 
\filldraw[draw=black, fill=white] (S06) node {\small $6'$} circle [radius=0.2]; 
\filldraw[draw=black, fill=white] (S07) node {\small $7'$} circle [radius=0.2]; 
\filldraw[draw=black, fill=white] (S08) node {\small $8'$} circle [radius=0.2]; 
\filldraw[draw=black, fill=white] (S09) node {\small $9'$} circle [radius=0.2]; 
\filldraw[draw=black, fill=white] (S010) node {\footnotesize $10'$} circle [radius=0.2]; 
\filldraw[draw=black, fill=white] (S011) node {\footnotesize $11'$} circle [radius=0.2]; 
\filldraw[draw=black, fill=white] (S012) node {\footnotesize $12'$} circle [radius=0.2];

\filldraw[draw=black, fill=white] (R1) node {\small $8$} circle [radius=0.2]; 
\filldraw[draw=black, fill=white] (R2) node {\small $2$} circle [radius=0.2]; 
\filldraw[draw=black, fill=white] (R3) node {\footnotesize $11'$} circle [radius=0.2]; 
\filldraw[draw=black, fill=white] (R4) node {\small $3$} circle [radius=0.2]; 
\filldraw[draw=black, fill=white] (R5) node {\footnotesize $10'$} circle [radius=0.2]; 
\filldraw[draw=black, fill=white] (R6) node {\small $9'$} circle [radius=0.2]; 
\filldraw[draw=black, fill=white] (R7) node {\small $8'$} circle [radius=0.2]; 
\filldraw[draw=black, fill=white] (R8) node {\small $2'$} circle [radius=0.2]; 
\filldraw[draw=black, fill=white] (R9) node {\small $11$} circle [radius=0.2]; 
\filldraw[draw=black, fill=white] (R10) node {\small $3'$} circle [radius=0.2]; 
\filldraw[draw=black, fill=white] (R11) node {\small $10$} circle [radius=0.2]; 
\filldraw[draw=black, fill=white] (R12) node {\small $9$} circle [radius=0.2]; 
\filldraw[draw=black, fill=white] (R13) node {\small $5'$} circle [radius=0.2]; 
\filldraw[draw=black, fill=white] (R14) node {\small $6'$} circle [radius=0.2]; 
\filldraw[draw=black, fill=white] (R15) node {\small $1$} circle [radius=0.2]; 
\filldraw[draw=black, fill=white] (R16) node {\small $7$} circle [radius=0.2]; 
\filldraw[draw=black, fill=white] (R17) node {\small $12$} circle [radius=0.2]; 
\filldraw[draw=black, fill=white] (R18) node {\small $4'$} circle [radius=0.2]; 
\filldraw[draw=black, fill=white] (R19) node {\small $5$} circle [radius=0.2]; 
\filldraw[draw=black, fill=white] (R20) node {\small $6$} circle [radius=0.2]; 
\filldraw[draw=black, fill=white] (R21) node {\small $1'$} circle [radius=0.2]; 
\filldraw[draw=black, fill=white] (R22) node {\small $7'$} circle [radius=0.2]; 
\filldraw[draw=black, fill=white] (R23) node {\footnotesize $12'$} circle [radius=0.2]; 
\filldraw[draw=black, fill=white] (R24) node {\small $4$} circle [radius=0.2];

\end{tikzpicture}
\caption{Some generalized tope graphs.}
\label{fig:genTopegraphs}
\end{figure}
\end{example}

The point of Conjecture \ref{conj:algorithm} is that even when 
$y_1, \dots, y_n$ are not Heaviside functions, if the 
resulting graph is an oriented matroid tope graph, then it must 
be isomorphic to $\Tope(\A)$.

\section{From graded VG algebras to signed circuits}
\label{sec:vgsign}

\subsection{Square-zero elements}
\label{subsec:sq0}

In the graded VG algebra, the square of a Heaviside class is zero. 
Thus, instead of idempotents, we consider the set of 
square-zero elements: 
\begin{equation}
\label{eq:setsq0}
\sqzero(\A):=\{\overline{y}\in\grVG^1(\A)\mid \overline{y}^2=0 \mbox{ in }\grVG^2(\A)\}. 
\end{equation}
If $y\in\gHeav(\A)\subset\Fil^1\cap\Idem(\A)$, then $y^2=y\in\Fil^1$, 
hence $\overline{y}\in\sqzero(\A)$. 
We now describe $\sqzero(\A)$ in terms of 
generalized Heaviside functions. 
\begin{lemma}
\label{lem:sq0}
Assume $\cha R\neq 2$. Then, 
\begin{equation}
\label{eq:sq0}
\sqzero(\A)=
\bigcup_{y\in\gHeav(\A)}R\cdot \overline{y}. 
\end{equation}
\end{lemma}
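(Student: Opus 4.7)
The inclusion $\supseteq$ is immediate from the observation in the paragraph preceding the lemma: for $y\in\gHeav(\A)$, the relation $y^{2}=y\in\Fil^{1}$ forces $\overline{y}^{2}=0$ in $\grVG^{2}=\Fil^{2}/\Fil^{1}$, and $(c\overline{y})^{2}=c^{2}\overline{y}^{2}=0$ for any $c\in R$, so $R\cdot\overline{y}\subseteq\sqzero(\A)$. For the reverse inclusion, I take $\overline{z}\in\sqzero(\A)$, lift it to a representative $z=\sum_{i=1}^{n}c_{i}x_{i}^{+}\in\Fil^{1}$, and set $S:=\{i:c_{i}\neq 0\}$. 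The cases $|S|\in\{0,1\}$ are immediate (yielding $\overline{z}=0$ or a scalar multiple of a Heaviside class), so I assume $|S|\geq 2$.

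My first step is to reduce to the subalgebra $\grVG^{\bullet}(\A_{S})\subseteq\grVG^{\bullet}(\A)$, where $\A_{S}:=\{H_{i}:i\in S\}$. By \Cref{prop:presenGRVG}, $\grVG^{\bullet}(\A_{T})\simeq R[e_{i}:i\in T]/I_{\A_{T}}$; since the signed circuits of $\A_{T}$ are precisely those of $\A$ with support in $T$, the presenting ideal satisfies $I_{\A_{T}}=I_{\A}\cap R[e_{i}:i\in T]$, and hence the natural map $\grVG^{\bullet}(\A_{T})\hookrightarrow\grVG^{\bullet}(\A)$ is injective. Taking $T=S$, the relation $\overline{z}^{2}=0$ already holds inside $\grVG^{2}(\A_{S})$, where now every coefficient of $\overline{z}$ is nonzero.

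Next I would show $\A_{S}$ has rank $2$. If its rank were $\geq 3$, the Sylvester--Gallai theorem (as in the proof of \Cref{thm:genHeav}) produces $i,j\in S$ such that $X:=H_{i}\cap H_{j}$ is a codim-$2$ flat through which no other hyperplane of $\A_{S}$ passes. The key observation is that every degree-$2$ circuit relation preserves codim-$2$ flats: if $\{a,b,c\}$ is a $3$-circuit, then $H_{a}\cap H_{b}=H_{a}\cap H_{c}=H_{b}\cap H_{c}$, so the relation only relates monomials $e_{p}e_{q}$ lying on a single flat. Consequently the reduction of any monomial $e_{p}e_{q}$ to the NBC basis involves only NBC monomials on the codim-$2$ flat $H_{p}\cap H_{q}$. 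On the flat $X$ the unique NBC monomial is $e_{i}e_{j}$ itself, so the NBC coefficient of $e_{i}e_{j}$ in $\overline{z}^{2}=2\sum_{p<q}c_{p}c_{q}e_{p}e_{q}$ is exactly $2c_{i}c_{j}$; vanishing forces $c_{i}c_{j}=0$ (using $\cha R\neq 2$ and $R$ an integral domain), contradicting $i,j\in S$.

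Hence $\A_{S}$ is rank $2$. After reorienting and renumbering $S=\{i_{1},\ldots,i_{m}\}$ so that the normals lie in an open half-plane in counterclockwise angular order, every triple $\{i_{p},i_{q},i_{r}\}$ with $p<q<r$ has signed circuit $(+,-,+)$ (the middle normal lies in the positive cone of the outer two), giving the relation $e_{i_{p}}e_{i_{r}}=e_{i_{p}}e_{i_{q}}+e_{i_{q}}e_{i_{r}}$. Specialization to $p=1$ produces the NBC basis $f_{q}:=e_{i_{1}}e_{i_{q}}$ (for $q=2,\ldots,m$) with $e_{i_{p}}e_{i_{q}}=f_{q}-f_{p}$ for $1<p<q$. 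A direct expansion then shows that the coefficient of $f_{q}$ in $\overline{z}^{2}/2$ equals $c_{i_{q}}\bigl(\sum_{p<q}c_{i_{p}}-\sum_{p>q}c_{i_{p}}\bigr)$; requiring all these to vanish while $c_{i_{q}}\neq 0$ forces, by a telescoping induction, $c_{i_{q}}=(-1)^{q-1}c_{i_{1}}$ together with $m$ odd. Therefore $\overline{z}=c_{i_{1}}\sum_{q=1}^{m}(-1)^{q-1}\overline{x_{i_{q}}^{+}}=c_{i_{1}}\overline{y}$ for $y:=\Alt^{+}(H_{i_{1}},-H_{i_{2}},H_{i_{3}},\ldots,(-1)^{m-1}H_{i_{m}})$, which lies in $\gHeav(\A)$ by Example~\ref{eq:stripe} together with \Cref{thm:genHeav}. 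The main obstacle I anticipate is the rank-$2$ bookkeeping: selecting reorientations so that the $\Alt^{+}$ convention of Example~\ref{eq:stripe} is literally satisfied, and matching the alternating sign pattern of the $c_{i_{q}}$'s with the corresponding reorientations of the $H_{i_{q}}$'s.
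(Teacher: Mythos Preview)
Your proof is correct and takes a genuinely different route from the paper's. The paper lifts a square-zero class to $y\in\Fil^{1}$ with $y^{2}\in\Fil^{1}$ and analyzes the actual \emph{function values} on chambers: for each $H\in\Supp(y)$ it uses \Cref{prop:vgrho} to deduce that both $\rho_{H}(y)$ and $\rho_{H}(y^{2})$ are constants, then solves $\alpha-\beta=\gamma_{1}$, $\alpha^{2}-\beta^{2}=\gamma_{2}$ to conclude that $y$ takes only two values on the chambers adjacent to $H$; Sylvester--Gallai and a parity check finish exactly as in \Cref{thm:genHeav}. You instead stay inside the graded algebra and exploit the presentation of \Cref{prop:presenGRVG}: after restricting to $\grVG^{\bullet}(\A_{S})$ (legitimate by the deletion exact sequence), the flat-by-flat decomposition of $\grVG^{2}$ isolates a single NBC coefficient $2c_{i}c_{j}$ in the rank-$\geq 3$ case, and in rank $2$ you solve the linear system coming from the circuit relations directly. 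What your approach buys is that it never touches chamber-by-chamber values and is entirely intrinsic to the graded object named in the lemma; what the paper's approach buys is that the rank-$2$ endgame is immediate (two alternating values force odd $m$) without the telescoping computation.

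Two small points of polish. First, your final identification can be shortened: once you know the square-zero elements in $\grVG^{1}(\A_{S})$ with full support form a single $R$-line, and $\overline{\Alt^{+}(\A_{S})}$ is such an element (nonzero, square-zero, full support by \Cref{eq:stripe}), you are done---no need to match orientations explicitly, and the notation $\Alt^{+}(H_{i_{1}},-H_{i_{2}},\dots)$ can be dropped. Second, the injection $\grVG^{\bullet}(\A_{S})\hookrightarrow\grVG^{\bullet}(\A)$ is cleanest via iterating the exact sequence of \Cref{prop:vg2} over hyperplanes outside $S$, rather than the ideal-intersection statement (which is true but needs the NBC basis to justify, and NBC sets of $\A_{S}$ are not literally the NBC sets of $\A$ contained in $S$).
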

\begin{proof}
Since $\grVG^1=\Fil^1/\Fil^0$, the set of square-zero elements 
can be written as 
\begin{equation}
\sqzero(\A)=
\{y\in\Fil^1\VG(\A)\mid y^2\in\Fil^1\VG(\A)\}/\Fil^0. 
\end{equation}
Now the result follows from Theorem \ref{thm:genHeav}. 
\end{proof}
As an application of Lemma \ref{lem:sq0}, we can distinguish 
non-isomorphic graded VG algebras even when the graded 
$R$-module structures agree. 
\begin{example}
\label{ex:disting}
Assume $\cha R\neq 2$. 
Let $\A_1$ and $\A_2$ be arrangements of six planes in $\R^3$ 
as in Figure \ref{fig:disting}. 
Their characteristic polynomials are 
$\chi(\A_1, t)=\chi(\A_2, t)=(t-1)(t-2)(t-3)$, so, 
$\grVG^\bullet(\A_1)$ and 
$\grVG^\bullet(\A_2)$ are isomorphic as graded $R$-modules. 
It is known that 
$\OS^\bullet(\A_1)\not\simeq\OS^\bullet(\A_2)$ 
\cite[Example 3.1]{fal-alg}. 
Moreover, the graded VG algebras are also non-isomorphic 
$\grVG^\bullet(\A_1)\not\simeq\grVG^\bullet(\A_2)$. 
Here we write $x_i$ for the class $\overline{x_i^+}\in\grVG^1(\A)$.
Then Lemma \ref{lem:sq0} gives 
\begin{equation}
\begin{aligned}
\sqzero(\A_1)=
&\bigcup_{i=1}^6Rx_i
\cup R(x_1+x_2-x_3)
\cup R(x_1-x_4+x_5)\\
&\cup R(x_1-x_4+x_6)
\cup R(x_1-x_5+x_6)
\cup R(x_4-x_5+x_6)
\\
\sqzero(\A_2)=
&\bigcup_{i=1}^6Rx_i
\cup R(x_1+x_2-x_3)
\cup R(x_1+x_5-x_6)\\
&\cup R(x_2-x_4-x_6)
\cup R(x_3-x_4-x_5).
\end{aligned}
\end{equation}
The numbers of components are different, hence 
the graded VG algebras can not be isomorphic. 
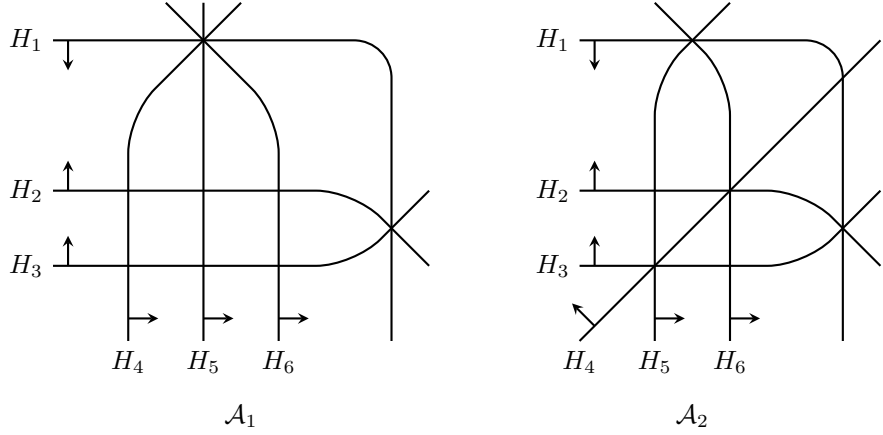
\begin{figure}[htbp]
\centering
\begin{tikzpicture}

\draw[rounded corners=5mm, thick] (0,4) node[left] {$H_1$} --(4.5,4)--(4.5,0);
\draw[rounded corners=5mm, thick] (1.5,4.5)--(3,3)--(3,0) node[below] {$H_6$};
\draw[rounded corners=5mm, thick] (2.5,4.5)--(1,3)--(1,0) node[below] {$H_4$} ;
\draw[rounded corners=5mm, thick] (2,4.5)--(2,0) node[below] {$H_5$};
\draw[rounded corners=5mm, thick] (0,2) node[left] {$H_2$} --(4,2)--(5,1);
\draw[rounded corners=5mm, thick] (0,1) node[left] {$H_3$} --(4,1)--(5,2);
\draw (2.5,-1) node {$\A_1$};

\draw[-stealth, thick] (0.2,4) -- ++(0,-0.4);
\draw[-stealth, thick] (0.2,2) -- ++(0,0.4);
\draw[-stealth, thick] (0.2,1) -- ++(0,0.4);
\draw[-stealth, thick] (1,0.3) -- ++(0.4,0);
\draw[-stealth, thick] (2,0.3) -- ++(0.4,0);
\draw[-stealth, thick] (3,0.3) -- ++(0.4,0);

\draw[rounded corners=5mm, thick] (7,4) node[left] {$H_1$} --(10.5,4)--(10.5,0);
\draw[rounded corners=5mm, thick] (9,4.5)--(8,3.5)--(8,0) node[below] {$H_5$};
\draw[rounded corners=5mm, thick] (8,4.5)--(9,3.5)--(9,0) node[below] {$H_6$};
\draw[rounded corners=5mm, thick] (7,1) node[left] {$H_3$}--(10,1)--(11,2);
\draw[rounded corners=5mm, thick] (7,2) node[left] {$H_2$}--(10,2)--(11,1);
\draw[rounded corners=5mm, thick] (7,0) node[below] {$H_4$}--(11,4);
\draw (8.5,-1) node {$\A_2$};

\draw[-stealth, thick] (7.2,4) -- ++(0,-0.4);
\draw[-stealth, thick] (7.2,2) -- ++(0,0.4);
\draw[-stealth, thick] (7.2,1) -- ++(0,0.4);
\draw[-stealth, thick] (7.2,0.2) -- ++(-0.3,0.3);
\draw[-stealth, thick] (8,0.3) -- ++(0.4,0);
\draw[-stealth, thick] (9,0.3) -- ++(0.4,0);

\end{tikzpicture}
\caption{$\A_1$ and $\A_2$ with the same characteristic polynomials}
\label{fig:disting}
\end{figure}
\end{example}

\subsection{Recovering signed circuits}
\label{subsec:signedcir}

\begin{theorem}
\label{thm:main2}
Assume $\cha R\neq 2$. 
Let $\A_1, \A_2$ be real arrangements and suppose 
that $\A_1$ is generic in codimension $2$. 
If $\grVG(\A_1)$ and $\grVG(\A_2)$ are isomorphic as 
graded $R$-algebras, then the set of signed circuits 
$\circuit(\A_1)$ and $\circuit(\A_2)$ are isomorphic. 
\end{theorem}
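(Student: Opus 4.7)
The plan is to identify the Heaviside classes of $\A_2$ inside $\grVG^\bullet(\A_2)$ using the square-zero set $\sqzero(\A_2)$, transfer the signed circuit relations of \Cref{prop:presenGRVG} along the isomorphism, and then use the uniqueness in \Cref{rem:uniqurel} to read off the signed circuits of $\A_2$ from those of $\A_1$.

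More precisely, let $\varphi:\grVG^\bullet(\A_1)\to\grVG^\bullet(\A_2)$ be the given isomorphism. Because $\A_1$ is generic in codimension $2$, \Cref{thm:genHeav} gives $\gHeav(\A_1)=\Heav(\A_1)$, and \Cref{lem:sq0} yields
\begin{equation}
\sqzero(\A_1)=\bigcup_{i=1}^{n}R\cdot\overline{x_i^+(\A_1)},
\end{equation}
a union of exactly $n$ pairwise distinct rank-$1$ $R$-submodules of $\grVG^1(\A_1)$. Since $\varphi$ is $R$-linear and takes $\sqzero(\A_1)$ bijectively onto $\sqzero(\A_2)$, the latter must also consist of exactly $n$ such submodules. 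The Heaviside classes $\overline{x_j^+(\A_2)}$ form a basis of $\grVG^1(\A_2)$ by \Cref{prop:vg1}, and for any codimension-$2$ flat of $\A_2$ supporting $k\geq 3$ hyperplanes, an alternating function built from any three of them contributes a class of the form $\overline{x_{i_1}^+}+\overline{x_{i_2}^+}+\overline{x_{i_3}^+}$, which generates a new rank-$1$ submodule inside $\sqzero(\A_2)$ by \Cref{lem:sq0}. Comparing the counts forces $\A_2$ to be generic in codimension $2$, and $\varphi$ descends to a permutation $\tau\in\mathfrak{S}_n$ together with units $c_i\in R^\times$ satisfying $\varphi(\overline{x_i^+(\A_1)})=c_i\,\overline{x_{\tau(i)}^+(\A_2)}$. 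Since products of Heaviside classes vanish precisely on dependent subsets (via the NBC basis recorded in \Cref{rem:uniqurel}), $\tau$ induces an isomorphism of the underlying matroids and sends circuits to circuits.

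For each signed circuit $\bm{\sigma}_1\in\circuit(\A_1)$ with support $C=\{i_1,\dots,i_k\}$ and sign pattern $\epsilon_p=\sgn(\lambda_{i_p})$, applying $\varphi$ to the defining relation $\sum_{p}\epsilon_p\,\overline{x_{C\setminus\{i_p\}}^+(\A_1)}=0$ produces
\begin{equation}
\sum_{p=1}^{k}\epsilon_p\Bigl(\prod_{q\neq p}c_{i_q}\Bigr)\overline{x_{\tau(C)\setminus\{\tau(i_p)\}}^+(\A_2)}=0
\end{equation}
in $\grVG^{k-1}(\A_2)$, with every coefficient a unit of $R$. Since $\tau(C)$ is a circuit of $\A_2$, the uniqueness statement in \Cref{rem:uniqurel} forces this to be a $R^\times$-scalar multiple of the signed circuit relation for some $\bm{\sigma}_2\in\circuit(\A_2)$ with sign pattern $\delta_p=\sgn(\mu_{\tau(i_p)})$, giving the key identity $\epsilon_p\delta_p=\kappa_C\,c_{i_p}\in\{\pm 1\}$ for every $p$ and a circuit-dependent unit $\kappa_C\in R^\times$.

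The final step is to absorb these scalars into one global reorientation. The identity above forces $c_i^2=\kappa_C^{-2}$ to be constant along the elements of any circuit; by matroid connectivity (treated component-wise in the disconnected case by scaling each tensor factor separately) one obtains a single $K\in R^\times$ with $Kc_i\in\{\pm 1\}$ for every $i$. Composing $\varphi$ with the graded algebra automorphism of $\grVG^\bullet(\A_2)$ multiplying $\grVG^k(\A_2)$ by $K^k$, we may assume $c_i\in\{\pm 1\}$ and hence $\kappa_C\in\{\pm 1\}$. Then the reorientation $\nu\in\{\pm 1\}^n$ defined by $\nu_{\tau(i)}=c_i$ sends the $\tau$-image of each $\bm{\sigma}_2$ to $(\nu_{\tau(i_p)}\delta_p)_p=(\kappa_C\epsilon_p)_p=\pm\bm{\sigma}_1\in\circuit(\A_1)$, using $c_{i_p}^2=1$ and $\delta_p=\epsilon_p\kappa_C c_{i_p}$; the symmetric argument applied to $\varphi^{-1}$ yields the converse, establishing $\circuit(\A_1)\simeq\circuit(\A_2)$. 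The main obstacle is precisely this last step: the units $c_i$ are determined by $\varphi$ only up to overall graded rescaling, and one must argue that the circuit-by-circuit scalars $\kappa_C$ can be absorbed into a single reorientation $\nu$ consistent across the ground set. This is where the assumption $\cha R\neq 2$ (so that $+1$ and $-1$ remain distinct units) and the scalar uniqueness in \Cref{rem:uniqurel} become indispensable.
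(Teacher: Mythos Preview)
Your argument is correct and follows essentially the same route as the paper: use \Cref{lem:sq0} together with genericity to identify $\sqzero$ with the $n$ Heaviside lines, recover the underlying matroid from vanishing of products (Cordovil), and then read off the signed circuits from the uniqueness of the $\pm 1$ relation in \Cref{rem:uniqurel}, absorbing the ambiguity into a reorientation defined component-wise on the circuit-connectivity classes.

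A few differences are worth noting. First, you work directly with the given isomorphism $\varphi$ and the scalars $c_i$, whereas the paper phrases the same content intrinsically via the notion of \emph{good generators} (choices $f_i$ in each line for which every circuit relation has $\pm 1$ coefficients) and shows any two such choices yield reorientation-equivalent $\check{\circuit}$. Second, you explicitly deduce that $\A_2$ is generic in codimension~$2$ by counting the lines in $\sqzero(\A_2)$; the paper leaves this implicit (the analogous count appears only in the filtered case, \Cref{thm:gencodim2}). Third, your normalization step---composing with the graded scaling $x\mapsto K^{\deg x}x$ to force $c_i\in\{\pm1\}$---is unnecessary and is the one place where your exposition is slightly rough: in the disconnected case a single $K$ need not exist, and your parenthetical appeal to ``scaling each tensor factor separately'' requires the tensor decomposition of $\grVG^\bullet$ along matroid components. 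The paper sidesteps this by defining $\widetilde{\mu}_i=\mu_i/\mu_{r_\tau}\in\{\pm1\}$ directly (one representative $r_\tau$ per component), which is exactly the reorientation you end up with; no normalization of the algebra is needed. With that simplification your proof matches the paper's.
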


The idea of the proof is as follows. By the genericity assumption, 
the set of square-zero elements are exactly scalar multiples 
of Heaviside functions in $\grVG^1$. A refined version of Cordovil's 
argument \cite[Proposition 3.4]{cor-com} enables us to 
recover signed circuits. 

\begin{proof}[Proof of Theorem \ref{thm:main2}]
Let $\A=\{H_1, \dots, H_n\}$ be a real arrangement in $\R^\ell$, and 
let $\alpha_i\in V^*$ be a defining equation of $H_i$. 
Assume $\A$ is generic in codimension $2$. 
We explain how to reconstruct signed circuits 
$\circuit(\A)$ from the graded VG algebra $\grVG^\bullet(\A)$. 

By Lemma \ref{lem:equal} we have $\gHeav(\A)=\Heav(\A)$, hence 
\begin{equation}
\sqzero(\A)=R\cdot f_1\cup\cdots\cup R\cdot f_n, 
\end{equation}
for some $f_1, \dots, f_n\in\grVG^1(\A)$. 
After permuting $[n]$ we may assume that 
\begin{equation}
R\cdot f_i=R\cdot \overline{x_i^+}, 
\end{equation}
as sets ($i=1, \dots, n$), 
where $x_i^+\in\Fil^1\VG(\A)$ is the Heaviside function 
for $H_i$. Hence, there exists $\mu_i\in R^\times$ such that 
$f_i=\mu_i\cdot\overline{x_i^+}$ ($i=1, \dots, n$). 

Let $I=\{i_1, \dots, i_k\}\subset [n]$. 
By Cordovil \cite[Corollary 2.5]{cor-com}, 
the linear forms 
$\alpha_{i_1}, \alpha_{i_2}, \dots, \alpha_{i_k}$ are 
independent if and only if 
\begin{equation}
\overline{x_{i_1}^+}\cdot\overline{x_{i_2}^+}\cdots
\overline{x_{i_k}^+}\neq 0, \mbox{ in }\ \grVG^k(\A), 
\end{equation}
equivalent to $f_{i_1}\cdots f_{i_k}\neq 0$. 
(For simplicity, write $f_I=f_{i_1}\cdots f_{i_k}$.) 
Thus, 
$\{\alpha_{i_1}, \dots, \alpha_{i_k}\}$ is a circuit if and only if 
\begin{equation}
f_I=0, \mbox{ and }
f_{I\setminus\{i_p\}}\neq 0
\mbox{ for $p=1, \dots,k$}, 
\end{equation}
where $I=\{i_1, \dots, i_k\}$. 
Hence the matroid structure (equivalently the intersection lattice 
$L(\A)$) of $\A$ is recovered from $f_1, \dots, f_n$. 

It remains to recover signed circuits. 
Let $I=\{i_1, \dots, i_k\}\subset[n]$ be a circuit. 
Then there is a non trivial linear relation 
\begin{equation}
\label{eq:linrel} 
\lambda^I_1\cdot f_{I\setminus\{i_1\}}+
\lambda^I_2\cdot f_{I\setminus\{i_2\}}+\cdots+
\lambda^I_k\cdot f_{I\setminus\{i_k\}}=0, 
\end{equation}
where $\lambda^I_1, \dots, \lambda^I_k\in R$, and necessarily 
$\lambda^I_p\neq 0$ ($p=1, \dots, k$). 
Such a tuple 
$(\lambda_1, \dots, \lambda_k)$ is unique up to 
nonzero scalar multiple by an element in the quotient field of $R$. 

Call generators $f_1, \dots, f_n$ \emph{good} if, 
for every circuit $I\subset[n]$, the relation \eqref{eq:linrel} can 
be chosen so that $\lambda^I_p=\pm 1$ for all $p=1, \dots, k$. 
By Proposition \ref{prop:presenGRVG}, the classes 
$\overline{x_1^+}, \dots, \overline{x_n^+}$ are good generators, 
therefore, good generators exist. 

Fix good generators $f_1, \dots, f_n$. 
For each circuit $I=\{i_1, \dots, i_k\}\subset [n]$ choose a  
linear relation 
$\sum_{p=1}^k\sigma_p\cdot f_{I\setminus\{i_p\}}=0$ 
with $\sigma_p\in\{\pm 1\}$. 
Define the sign vector 
$(\lambda_1, \dots, \lambda_n)\in\{0, \pm\}^n$ by 
\begin{equation}
\lambda_i=
\begin{cases}
\sigma_p, \ \mbox{$i\in I$ and $i=i_p$}\\
0, \ i\notin I. 
\end{cases}
\end{equation}
We call $(\lambda_1, \dots, \lambda_n)$ the extension of $(\sigma_p)$ 
by $0$. 
Now, let us define $\check{\circuit}(f_1, \dots, f_n)$ as 
\begin{equation}
\check{\circuit}(f_1, \dots, f_n)=
\left\{
(\lambda_1, \dots, \lambda_n)
\middle|\ 
\begin{lgathered}
\mbox{$\exists$ a circuit $I=\{i_1, \dots, i_k\}$ with a}\\
\mbox{relation }\sum_{p=1}^k\sigma_p\cdot f_{I\setminus\{i_p\}}=0
\mbox{ such that}\\
\mbox{$(\lambda_i)$ is the extension of $(\sigma_p)$ by $0$}
\end{lgathered}
\right\}.
\end{equation}
By Remark \ref{rem:uniqurel}, 
\begin{equation}
\check{\circuit}(\overline{x_1^+}, \dots, \overline{x_n^+})=
\circuit(\alpha_1, \dots, \alpha_n). 
\end{equation}
It remains to show that if 
$(f_1, \dots, f_n)$ and $(f'_1, \dots, f'_n)$ 
are good generators with $R\cdot f_i=R\cdot f'_i$ ($i=1, \dots, n$), 
then 
$\check{\circuit}(f_1, \dots, f_n)$ and 
$\check{\circuit}(f'_1, \dots, f'_n)$ differ 
by a reorientation. 

Assume that $(f_1, \dots, f_n)$ and $(f'_1, \dots, f'_n)$ 
are good generators. 
For each circuit $I=\{i_1, \dots, i_k\}\subset[n]$, 
we have relations 
\begin{equation}
\label{eq:f1}
\lambda_1^I\cdot f_{I\setminus\{i_1\}}+
\lambda_2^I\cdot f_{I\setminus\{i_2\}}+\cdots+
\lambda_k^I\cdot f_{I\setminus\{i_k\}}=0
\end{equation}
\begin{equation}
\label{eq:fprime}
\lambda_1^{'I}\cdot f_{I\setminus\{i_1\}}'+
\lambda_2^{'I}\cdot f_{I\setminus\{i_2\}}'+\cdots+
\lambda_k^{'I}\cdot f_{I\setminus\{i_k\}}'=0, 
\end{equation}
with 
$\lambda_{p}^I, \lambda_{p}^{'I}\in\{\pm 1\}$. 
Since $R\cdot f_i=R\cdot f'_i$, 
there exist $\mu_i\in R^\times$ such that $f_i=\mu_i\cdot f_i'$ 
($i=1, \dots, n$), hence 
\begin{equation}
\label{eq:fprime2}
\mu_{I\setminus\{i_1\}}\lambda_1^I\cdot f'_{I\setminus\{i_1\}}+
\mu_{I\setminus\{i_2\}}\lambda_2^I\cdot f'_{I\setminus\{i_2\}}+\cdots+
\mu_{I\setminus\{i_k\}}\lambda_k^I\cdot f'_{I\setminus\{i_k\}}=0. 
\end{equation}
Since \eqref{eq:fprime} and \eqref{eq:fprime2} represent the same 
relation up to a unit, 
there exists $c_I\in R^\times$ such that 
\begin{equation}
(\mu_{I\setminus\{i_1\}}\lambda_1^I, 
\mu_{I\setminus\{i_2\}}\lambda_2^I, \dots, 
\mu_{I\setminus\{i_k\}}\lambda_k^I)=
c_I\cdot(\lambda_1^{'I}, \lambda_2^{'I}, \dots, \lambda_k^{'I}). 
\end{equation}
Comparing $p$-th and $q$-th component ($1\leq p<q\leq k$), 
we have 
$\mu_{I\setminus\{i_p\}}\lambda_p^I=c_I\lambda_p^{'I}$ and 
$\mu_{I\setminus\{i_q\}}\lambda_q^I=c_I\lambda_q^{'I}$, 
and therefore, 
\begin{equation}
\mu_{I\setminus\{i_q\}}=
\frac{\lambda_p^I\cdot\lambda_q^{'I}}{\lambda_p^{'I}\cdot\lambda_q^I}\cdot\mu_{I\setminus\{i_p\}}
=\pm\mu_{I\setminus\{i_p\}}. 
\end{equation}
Since 
$\mu_{i_p}=\mu_I/\mu_{I\setminus\{i_p\}}$ and 
$\mu_{i_q}=\mu_I/\mu_{I\setminus\{i_q\}}$, we have 
$\mu_{i_p}=\pm\mu_{i_q}$. 
Thus, if indices $i, j\in [n]$ are contained in a single circuit 
$i, j\in I\subset[n]$, then $\mu_i=\pm\mu_j$. Let us define 
the equivalence relation $\sim$ on $[n]$ as the 
reflexive and transitive closure of 
the binary relation 
\begin{equation}
i\sim j\Longleftrightarrow
\mbox{there exists a circuit $I\subset[n]$ such that $i, j\in I$}. 
\end{equation}
Then, clearly, $i\sim j$ implies $\mu_i=\pm\mu_j$. 
Let $[n]=\bigsqcup_{\tau\in T}X_\tau$ be the partition into 
equivalence classes and choose a representative $r_\tau\in X_\tau$ 
from each equivalence class $X_\tau$. 
For each $i\in[n]$, there exists unique $\tau\in T$ 
such that $i\in X_\tau$. 
Now define $\widetilde{\mu}_i\in \{\pm 1\}$ by 
\begin{equation}
\widetilde{\mu}_i:=\frac{\mu_i}{\mu_{r_\tau}},\ \ (i\in X_\tau). 
\end{equation}
Now let $\widetilde{f}_i:=\widetilde{\mu}_i\cdot f_i'$. 
Then $\check{\circuit}(f'_1, \dots, f'_n)$ and 
$\check{\circuit}(\widetilde{f}_1, \dots, \widetilde{f}_n)$ 
are connected by the reorientation map 
$(\sigma_1, \dots, \sigma_n)\mapsto(\widetilde{\mu}_1\cdot\sigma_1, 
\dots, \widetilde{\mu}_n\cdot\sigma_n)$ of sign vectors. 
We will also see 
$\check{\circuit}(\widetilde{f}_1, \dots, \widetilde{f}_n)
=\check{\circuit}(f_1, \dots, f_n)$. 
Indeed, for any circuit 
$I=\{i_1, \dots, i_k\}\subset [n]$, there exists $\tau\in T$ 
such that $I\subset X_\tau$. 
And then 
\begin{equation}
\begin{aligned}
\lambda_1^I\cdot f_{I\setminus\{i_1\}}+
&\cdots+
\lambda_k^I\cdot f_{I\setminus\{i_k\}}\\
=&
\mu_{I\setminus\{i_1\}}\lambda_1^I\cdot f'_{I\setminus\{i_1\}}+
\cdots+
\mu_{I\setminus\{i_k\}}\lambda_k^I\cdot f'_{I\setminus\{i_k\}}\\
=&
\mu_{r_\tau}^{k-1}\cdot\widetilde{\mu}_{I\setminus\{i_1\}}\cdot
\lambda_1^I \cdot f'_{I\setminus\{i_1\}}+\cdots +
\mu_{r_\tau}^{k-1}\cdot\widetilde{\mu}_{I\setminus\{i_k\}}\cdot
\lambda_k^I \cdot f'_{I\setminus\{i_k\}}
\\
=&
\mu_{r_\tau}^{k-1}\cdot
(
\lambda_1^I\cdot \widetilde{f}_{I\setminus\{i_1\}}+
\cdots+
\lambda_k^I\cdot \widetilde{f}_{I\setminus\{i_k\}}
). 
\end{aligned}
\end{equation}
So, the linear relations among 
$f_{I\setminus\{i_1\}}, \dots, f_{I\setminus\{i_k\}}$ and those of 
$\widetilde{f}_{I\setminus\{i_1\}}, \dots, \widetilde{f}_{I\setminus\{i_k\}}$ are the same. 
Thus, we have 
$\check{\circuit}(f_1, \dots, f_n)=
\check{\circuit}(\widetilde{f}_1, \dots, \widetilde{f}_n)$. 
\end{proof}

\begin{example}
\label{ex:LtoGRVG}
Assume $\cha R\neq 2$. 
Let $\A_1$ and $\A_2$ be the generic $6$-plane arrangements 
from Example \ref{ex:6planes}. 
We have $L(\A_1)\simeq L(\A_2)$, hence 
$\OS^\bullet(\A_1)\simeq\OS^\bullet(\A_2)$ as 
graded $R$-algebras. 
However, graded VG algebras are not isomorphic, 
$\grVG^\bullet(\A_1)\not\simeq\grVG^\bullet(\A_2)$. 
Indeed, 
if $\grVG^\bullet(\A_1)\simeq\grVG^\bullet(\A_2)$, then 
by Theorem \ref{thm:main2}, the signed circuits would be equivalent 
(up to reorientation and relabeling), which would force the tope 
graphs to be isomorphic, contradicting Example \ref{ex:6planes}. 
Combined with Moseley's result \cite{mos-equ} mentioned in 
\S \ref{sec:intro}, we also have 
$H^\bullet(M_3(\A_1), R)\not\simeq H^\bullet(M_3(\A_2), R)$. 
In contrast to the case of $H^\bullet(M_2(\A), R)$, 
the cohomology ring $H^\bullet(M_3(\A), R)$ is not 
determined by the intersection lattice $L(\A)$. 
Note also that by Randell's lattice isotopy 
theorem \cite{ran-lat}, $M_2(\A_1)$ and $M_2(\A_2)$ are 
homeomorphic. 
In summary, neither $L(\A)$, $\OS^\bullet(\A)$, nor 
the space $M_2(\A)$ determines $\grVG^\bullet(\A)$ 
(equivalently, $H^\bullet(M_3(\A), R)$ as well). 
\end{example}

\begin{remark}
\label{rem:reconstgr}
Let $\A=\{H_1, \dots, H_n\}$ be a real 
arrangement in $\R^\ell$. In analogy with \S \ref{subsec:reconst}, 
we propose a conjectural 
algorithm recovering $\circuit(\A)$ from $\grVG^\bullet(\A)$ 
as follows. 
\begin{itemize}
\item[(Step 1)] 
Choose $f_1, \dots, f_n\in\sqzero(\A)$ so that 
$f_1, \dots, f_n$ form a good set of generator of $\grVG^1(\A)$. 
\item[(Step 2)] 
If $\check{\circuit}(f_1, \dots, f_n)$ is the set of signed circuits 
of some oriented matroid, stop. 
Otherwise, return to (Step 1) with different choice of $f_1, \dots, f_n$. 
\end{itemize}
We conjecture that 
the resulting oriented matroid is equivalent to the original one. 
This procedure terminates in finitely many steps whenever 
there are only finitely many choices in (Step 1), e.g., 
the case $\# R^\times<\infty$. 
Unlike the filtered case (Conjecture \ref{conj:algorithm}), 
there can be infinitely many possibilities in (Step 1) when 
$\# R^\times=\infty$, so termination is not guaranteed in full 
generality. 
\end{remark}

\subsection{Concluding remarks}

The graded VG algebra is obviously determined by the 
filtered VG algebra. On the other hand, it remains open 
whether the filtered structure can be reconstructed 
from the graded one. 
\begin{question}
\label{q:reconst}
Let $R$ be an integral domain. Can one recover the 
filtered $R$-algebra $\VG(\A)_R$ from the graded $R$-algebra 
$\grVG^\bullet(\A)_R$ directly? 
\end{question}

By Theorem \ref{thm:main2}, 
the answer is affirmative when $\cha R\neq 2$ and 
$\A$ is generic in codimension $2$ via the reconstruction of 
the oriented matroid. 
It would be interesting to know whether if we can reconstruct 
the filtered VG algebra directly from the graded one. 

In Conjecture \ref{conj:reconst}, we assumed $\cha R\neq 2$. 
Here, we briefly discuss the case $\cha R=2$. 
If $\cha R=2$, we can not recover the oriented matroid 
from the graded VG algebra. The counterexample can be obtained 
again by six generic planes. 
Let $\A_1$ and $\A_2$ be six generic planes in $\R^3$ as in 
Example \ref{ex:6planes}. 
Then $\grVG^\bullet(\A_1)\simeq \grVG^\bullet(\A_2)$ when 
$\cha R=2$, because 
\begin{itemize}
\item 
since both are generic arrangements, we have $L(\A_1)\simeq L(\A_2)$; 
\item it follows 
$\OS^\bullet(\A_1)\simeq\OS^\bullet(\A_2)$ for any coefficient ring $R$; 
\item 
by Proposition \ref{prop:vg1}, when $\cha R=2$, 
$\grVG^\bullet(\A_i)\simeq \OS^\bullet(\A_i)$ ($i=1,2$). 
\end{itemize}
Nevertheless, the oriented matroids are not isomorphic. 

On the other hand, reconstructability from the filtered VG 
algebra with $\cha R=2$ remains unclear. 
\begin{question}
\label{q:reconst2}
Assume $\cha R=2$. Can one recover the oriented matroid 
from $\VG(\A)_R$? 
\end{question}
We expect that a deeper understanding of the relation 
between filtered and graded VG algebras will shed further light 
on oriented matroids and geometry of the complement $M_2(\A)$.

\subsection*{Acknowledgements} 
The authors thank Professor Takuro Abe for pointing 
out several mistakes in the previous version. 
The authors are also grateful to the referee(s) for their careful reading and for constructive comments that improved the manuscript. 
The second author is supported by 
JSPS KAKENHI JP23H00081.

\end{document}